\title{An embedding of the Morse boundary in the Martin boundary}
\author{Matthew Cordes and Matthieu Dussaule and Ilya Gekhtman}
\date{}
\newcommand\R{\mathbb{R}}
\theoremstyle{plain}
\newtheorem{definition}{Definition}[section]
\newtheorem{proposition}[definition]{Proposition}
\newtheorem{corollary}[definition]{Corollary}
\newtheorem{theorem}[definition]{Theorem}
\newtheorem*{theorem*}{Theorem}
\newtheorem*{question*}{Question}
\newtheorem{lemma}[definition]{Lemma}
\newtheorem*{prop*}{Proposition}
\newtheorem*{lem*}{Lemma}
\theoremstyle{remark}
\newtheorem*{rem*}{Remark}
\apptocmd{\sloppy}{\hbadness 10000\relax}{}{}
\apptocmd{\sloppy}{\vbadness 10000\relax}{}{}
\begin{document}
\begin{abstract}
We construct a one-to-one continuous map from the Morse boundary of a hierarchically hyperbolic group to its Martin boundary.
This construction is based on deviation inequalities generalizing Ancona's work on hyperbolic groups \cite{AnconaMartinhyperbolic}.
This provides a possibly new metrizable topology on the Morse boundary of such groups.
We also prove that the Morse boundary has measure 0 with respect to the harmonic measure unless the group is hyperbolic.
\end{abstract}

\maketitle

\section{Introduction}

To any Gromov hyperbolic space can be associated a compactification obtained by gluing asymptotic equivalence classes of geodesic rays \cite{Gromov}.
This topological space, called the Gromov boundary, is a quasi-isometry invariant and has therefore been invaluable in studying algebraic, geometric, probabilistic and dynamical properties of hyperbolic groups, that is groups which act properly and cocompactly on Gromov hyperbolic spaces.

A major theme in recent research in metric geometry and geometric group theory has been studying various generalizations of hyperbolic groups which nevertheless admit interesting actions on Gromov hyperbolic spaces.  
These include
\begin{enumerate}[(a)]
\item Weakly hyperbolic groups: groups which admit non-elementary actions on (possibly non proper) Gromov hyperbolic spaces.
\item Acylindrically hyperbolic groups \cite{Sela}, \cite{Bowditchcurvecomplex}, \cite{Osinacylindrical}: weakly hyperbolic groups which admit a non-elementary action on a hyperbolic space satisfying a weakened discontinuity assumption called acylindricity.
These include outer automorphism groups of free groups, as well as the classes listed below.
\item Hierarchically hyperbolic groups \cite{BehrstockHagenSisto1}, \cite{Sistosurvey}: a special class of weakly hyperbolic groups admitting a nice combinatorial description.
These include mapping class groups, graph products such a right angled Artin and Coxeter groups, and finite covolume Kleinian groups.
\item Relatively hyperbolic groups \cite{Farb}, \cite{Bowditch}, \cite{Osinrelatively}, \cite{DrutuSapir}: groups admitting geometrically finite actions on proper geodesic Gromov hyperbolic spaces. These include fundamental groups of finite volume negatively curved manifolds and free products of arbitrary finite collections of groups.
\end{enumerate}

To better study these examples, one would like to associate to non-hyperbolic spaces a quasi-isometry invariant bordification analogous to the Gromov boundary.
One such object is the given by the Morse boundary, which roughly encodes all the hyperbolic directions.
More precisely, following Cordes \cite{Cordes}, given a function $N:[1,+\infty)\times [0,\infty) \to [0,\infty)$,
a geodesic $\alpha$ in a metric space $X$ is called $N$-Morse if for every point $x,y$ on $\alpha$, any $(\lambda,c)$ quasi-geodesic joining $x$ to $y$ stays within $N(\lambda,c)$ of $\alpha$.
The function $N$ is called a Morse gauge.

The $N$-Morse boundary of a metric space $X$,  $\partial_M^NX$, is the set of equivalence classes of Morse geodesic rays, where two $N$-Morse geodesic rays are declared to be equivalent if they stay within bounded distance of each other.
One can endow the $N$-Morse boundary with a topology, mimicking the definition of the topology on the Gromov boundary of a hyperbolic space $X$, that is, two equivalence classes of geodesic rays are close in this topology if they fellow travel for a long time.

The Morse boundary of $X$, $\partial_MX$, as a set, is the union of all the $N$-Morse boundaries.
For $\mathrm{CAT}(0)$ spaces, it coincides with the contracting boundary introduced by Charney and Sultan \cite{CharneySultan}.
It is topologized with the direct limit topology over all Morse gauges. The resulting topological space is a visibility space (i.e. every pair of points in the Morse boundary can be joined by a bi-infinite Morse geodesic) and it is invariant under quasi-isometry, see \cite{Cordes} for more details on all this.
Moreover, it is compact if and only if $X$ is hyperbolic, see \cite{Murray}.
However, this topology is not in general metrizable.
However, one can also endow the Morse boundary with a metrizable topology, called the Cashen--Mackay topology, see \cite{CashenMacKay}.
Whenever the Morse boundary embeds as a set into a metrizable set $Z$, such as the visual boundary of a CAT(0) space, it seems interesting to compare the Cashen--Mackay topology with the induced topology from $Z$, see \cite{Incerti-Medici} for instance.
We also refer to \cite{Cordessurvey} for many more properties of the Morse boundary.

\medskip
A major stream in geometric group theory is devoted to studying to what extent the algebraic and geometric properties of a group $G$ determine the properties of Markov chains on it, and conversely.
One way to do this is to relate asymptotic properties of random walks on a group to the dynamics of its action on some geometric boundary $Z$.
The goal of such an endeavor is often to show that typical paths in $G$ of the random walk generated by a probability measure $\mu$ converge to a point in the geometric boundary $Z$ and if possible, that $G\cup Z$ is in a measure theoretic sense the maximal bordification with this property.
This provides a geometric realization of (some quotient) of the Poisson boundary of the random walk and this identification can in turn provide geometric information, see \cite{KaimanovichVershik}, \cite{Erschlerannals}, \cite{Erschlersurvey}.

Our goal is to study the Morse boundary of Cayley graphs in this framework.
Unfortunately, for non-hyperbolic groups it is too small to be a model for the Poisson boundary: indeed as we will show, typical paths of the random walk do not converge to points in the Morse boundary.
Nevertheless, in this paper we connect geometry and probability in a different way, by showing the Morse boundary embeds into a probabilistically defined topological space called the Martin boundary.
The Martin boundary is defined as follows.
Consider a transient random walk on a finitely generated group $\Gamma$.
Let $F(g,h)$ be the probability that a random path starting at $g$ ever reaches $h$.
The expression $d_{G}(g,h)=-\log F(g,h)$ defines a (possibly asymmetric) metric on $\Gamma$ called the Green metric.
Its horofunction compactification is called the Martin boundary, see \cite{Sawyer}.
We will give more details on Poisson boundaries and Martin boundaries in Section~\ref{SectionPoissonMartin}.

Identifying the precise homeomorphism type of the Martin boundary is a difficult problem---in general different random walks on the same group can have wildly different Martin boundaries, see \cite{Gouezelannalsproba}.
It is therefore interesting to relate some geometric property of a group $\Gamma$ with Martin boundaries for large classes of random walks on $\Gamma$.
Ancona proved that for finitely supported random walks on hyperbolic groups, the Martin boundary is equivariantly homeomorphic to the Gromov boundary.
We prove a weaker result in a more general context.

\medskip
Recall that a relatively hyperbolic group is called non-elementary if its Bowditch boundary, which is the limit set into the Gromov boundary of a hyperbolic space $X$ on which the group geometrically finitely acts by isometries, is infinite.
Equivalently, the action on $X$ contains infinitely many independent loxodromic elements.

We also fix the following terminology for hierarchically hyperbolic groups.
Recall that such a group $\Gamma$ is equipped with an index set $\mathfrak{S}$ together with $\delta$-hyperbolic spaces $(CW,d_W)$, $W\in \mathfrak{S}$ (the constant $\delta$ is fixed).
It is also equipped with projection maps $\pi_W:\Gamma \to CW$.
Elements of $\mathfrak{S}$ are called domains.
The set of domains $\mathfrak{S}$ is endowed with a partial order with respect to which $\mathfrak{S}$ is either empty or has a unique maximal element $\mathbf{S}$.
The group $\Gamma$ acylindrically acts on on this maximal set $C\mathbf{S}$.
Whenever $C\mathbf{S}$ has unbounded diameter, this action is non-elementary and so in particular, $\Gamma$ is acylindrically hyperbolic.
We will say in this situation that $\Gamma$ is \textit{a non-elementary hierarchically hyperbolic group}.

\begin{theorem}\label{maintheoremMorsetoMartin}
Let $\Gamma$ be a non-elementary hierarchically hyperbolic group or a non-elementary relatively hyperbolic group whose parabolic subgroups have empty Morse boundary.
Let $\mu$ be a probability measure on $\Gamma$ whose finite support generates $\Gamma$ as a semi-group.
Then, the identity map on $\Gamma$  extends to an injective map $\Phi$ from the Morse boundary to the Martin boundary which is continuous with respect to the direct limit topology.
\end{theorem}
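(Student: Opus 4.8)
The plan is to build $\Phi$ by transporting convergence of geodesic rays to convergence of Martin kernels, using a pair of Ancona-type deviation inequalities along Morse geodesics as the only geometric input. The engine of the argument is the fact that along an $N$-Morse geodesic the Green metric $d_G(g,h)=-\log F(g,h)$ is almost additive, with a multiplicative defect controlled uniformly in terms of the Morse gauge $N$; establishing these inequalities is the technical core done beforehand, and within this statement they are the input. Recall that the Martin boundary is the horofunction compactification of $d_G$, so a boundary point is a limit of Martin kernels $K_y(\cdot)=F(\cdot,y)/F(e,y)$, equivalently a limit of the horofunctions $g\mapsto d_G(g,y)-d_G(e,y)=-\log K_y(g)$, as $y\to\infty$. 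The guiding principle is that this almost-additivity lets us push the fellow-traveling topology of each $\partial_M^N\Gamma$ into the Martin boundary.

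First I would prove convergence of Martin kernels along Morse rays. Fix $\xi\in\partial_M^N\Gamma$, a geodesic ray $\alpha$ from $e$ representing $\xi$, write $x_n=\alpha(n)$, and fix $g\in\Gamma$. For $m>n$ both large, the Morse property forces any geodesic $[g,x_m]$ to pass within a distance $D_N$ of $x_n$, with $x_n$ deep inside it; the strong form of the deviation inequality then gives $F(g,x_m)=F(g,x_n)F(x_n,x_m)(1+O(\rho_N^{\min(n,m-n)}))$ together with the same identity with $e$ in place of $g$. Dividing the two relations cancels the common factor $F(x_n,x_m)$ and yields $K_{x_m}(g)/K_{x_n}(g)=1+O(\rho_N^{\min(n,m-n)})$, so that $(\log K_{x_n}(g))_n$ is Cauchy with a geometric rate depending only on $N$ and $|g|$. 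Hence $K_\xi(g):=\lim_n K_{x_n}(g)$ exists for every $g$, and the same computation applied to two rays at bounded Hausdorff distance shows that $K_\xi$ does not depend on the chosen representative or sequence. I then set $\Phi(\xi)$ to be the corresponding point of the Martin boundary; since $K_\xi$ is an honest limit of kernels $K_{x_n}$ with $x_n\to\infty$ it is a boundary point, distinct from every interior kernel, so $\Phi$ extends the identity.

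Continuity for the direct limit topology is equivalent to continuity of each restriction $\Phi|_{\partial_M^N\Gamma}$, and here the uniformity of the constants $C_N,\rho_N$ over $\partial_M^N\Gamma$ is decisive. If $\xi_k\to\xi$ in $\partial_M^N\Gamma$, then representing rays fellow-travel $\alpha$ up to points $w_k\to\infty$ along $\alpha$. For fixed $g$ and $k$ large, the estimate above compares both $K_{\xi_k}(g)$ and $K_\xi(g)$ to the common value $K_{w_k}(g)$ up to an error $O(\rho_N^{d(e,w_k)})$ that is uniform on $\partial_M^N\Gamma$; letting $k\to\infty$ gives $K_{\xi_k}(g)\to K_\xi(g)$ for every $g$, which is exactly convergence in the Martin topology.

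It remains to prove injectivity, which I expect to be the main obstacle. If $\xi\neq\eta$ then, by the contracting property of Morse geodesics, representing rays $\alpha$ and $\beta$ diverge: there is a point $p=\alpha(T)$ lying beyond the bounded projection of $\beta$ onto $\alpha$, so that $p$ is uniformly far from every geodesic from $e$ to a point $\beta(m)$. Evaluating horofunctions at $p$ separates the two images: along $\alpha$ the deviation inequality gives $-\log K_\xi(p)\approx -d_G(e,p)$, whereas the fact that routing a path to $\beta(m)$ through the off-geodesic point $p$ is exponentially costly yields a strictly larger value $-\log K_\eta(p)\geq -d_G(e,p)+\kappa$ for a definite $\kappa=\kappa(T)\to\infty$. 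The delicate point is establishing this reverse lower bound, namely quantifying that $F(e,\beta(m))\gg F(e,p)F(p,\beta(m))$ when $p$ is off the geodesic; this is where the full strength of the deviation inequalities and the contracting geometry are used, and it is the step I expect to require the most care. In the relatively hyperbolic case the hypothesis that parabolic subgroups have empty Morse boundary is precisely what guarantees that Morse rays make only bounded excursions into peripheral cosets, so that the hyperbolic-type strong Ancona inequalities apply to them verbatim.
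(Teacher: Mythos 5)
There is a genuine gap at the very first step: you invoke a \emph{strong} Ancona inequality, $F(g,x_m)=F(g,x_n)F(x_n,x_m)\bigl(1+O(\rho_N^{\min(n,m-n)})\bigr)$, with a multiplicative error tending to $1$ at a geometric rate. The deviation inequality actually available (Theorem~\ref{maintheoremMorseAncona}) is only the \emph{weak} form: $F(g,x_m)\asymp F(g,x_n)F(x_n,x_m)$ up to a fixed multiplicative constant $C_N>1$ depending on the Morse gauge. Dividing two such relations only yields $K_{x_m}(g)\asymp_{C_N^2} K_{x_n}(g)$, i.e.\ all the values $K_{x_n}(g)$ for large $n$ lie in a fixed multiplicative window; this does not make $(\log K_{x_n}(g))_n$ Cauchy, so the pointwise limit $K_\xi(g)$ is not established. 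The strong form with $1+o(1)$ error is a substantially harder statement (of the type needed for local limit theorems on hyperbolic groups) and is not proved here, so your construction of $\Phi$, and with it the continuity argument --- which relies on the same $O(\rho_N^{d(e,w_k)})$ error --- does not go through as written.

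The paper circumvents exactly this difficulty by a potential-theoretic detour: one takes an arbitrary limit point $\xi$ of $(x_n)$ in the (compact, metrizable) Martin compactification, proves that $K_\xi$ is a \emph{minimal} harmonic function (Lemma~\ref{lemma1MorseinsideMartin}, via an iteration scheme using only the weak inequality), and then uses Ancona's lemma that two distinct minimal kernels $K_\xi,K_{\xi'}$ satisfy $K_{\xi'}(g_m)/K_\xi(g_m)\to 0$ along the ray --- which is incompatible with the bounded-ratio estimate $K_\xi(g_m)\asymp K_{\xi'}(g_m)$ coming from the weak inequality. This forces uniqueness of the limit point without ever proving convergence of the kernels directly, and the same minimality is what closes the continuity argument (one shows $K_{\xi'}\le K_\xi$ for any limit point $\xi'$ of $\Phi_N(x_n)$ and concludes $\xi'=\xi$ by minimality). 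Your injectivity sketch, by contrast, is essentially sound and close to the paper's: the ``reverse lower bound'' you worry about is just the upper-bound half of the weak Ancona inequality applied to the Morse geodesic from $p$ to $\beta(m)$, which passes uniformly close to $e$ by thinness of Morse triangles, giving $K_{\beta(m)}(p)\lesssim F(p,e)\to 0$ while $K_{x_n}(p)\gtrsim F(e,p)^{-1}\to\infty$. The missing idea, then, is the minimality step; without it (or without proving the strong inequality) the map is not well defined.
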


Indeed, in the proof of this theorem, we show that for each Morse gauge $N$, $\partial_M^N \Gamma$ topologically embeds in in the Martin boundary. Using some results from \cite{Cordes} we get a corollary that sheds some light on the topology of the Martin boundary of the mapping class group: For any $n \geq 2$ there exists a surface of finite type $S$ such that $\partial_\mu \mathrm{MCG}(S)$ contains a topologically embedded $(n-1)$-sphere, see Corollary~\ref{corollaryMCGs}.

Thus, the Morse boundary can be identified with a certain "canonical" subset of the Martin boundary.
For non-elementary relatively hyperbolic groups the corresponding result follows from a stronger result of \cite{GGPY}, but we provide another proof in this paper.
In Ancona's identification of the Martin boundary of a random walk on a hyperbolic group with the Gromov boundary, the main technical step is a certain deviation inequality asserting that the Green metric is roughly (up to an additive constant) additive along word geodesics.
Namely, if $g,h,w\in \Gamma$ are on a word geodesic aligned in this order, then $|d_{G}(g,w)-d_{G}(g,h)-d_{G}(g,w)|<C$ for a constant $C$ depending only $\Gamma$ and the random walk.
In order to prove Theorem~\ref{maintheoremMorsetoMartin} we show that a similar inequality holds along Morse geodesics.
\begin{theorem}\label{maintheoremMorseAncona}
Let $\Gamma$ be a non-elementary hierarchically hyperbolic group or a non-elementary relatively hyperbolic group whose parabolic subgroups have empty Morse boundary.
Let $\mu$ be a probability measure on $\Gamma$ whose finite support generates $\Gamma$ as a semi-group.
Then, for any Morse gauge $N$, there exists $C$ such that for any $N$-Morse geodesic $\alpha$ and for any points $g,h,w \in \alpha$ aligned in this order,
$$|d_{G}(g,w)-d_{G}(g,h)-d_{G}(g,w)|<C.$$
\end{theorem}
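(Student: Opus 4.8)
The plan is to prove the two estimates comprising the claimed bound separately. The upper direction, $F(g,w)\ge F(g,h)F(h,w)$, equivalently $d_G(g,w)\le d_G(g,h)+d_G(h,w)$, is immediate from the strong Markov property applied at the first hitting time of $h$: one way to travel from $g$ to $w$ is to first visit $h$ and then reach $w$, so the bound holds with constant $0$ and uses no geometry. The entire content is the reverse, Ancona-type inequality $F(g,w)\le C\,F(g,h)\,F(h,w)$, required uniformly over all $N$-Morse geodesics. I would prove it by exhibiting $h$ as a uniform \emph{bottleneck}: every trajectory from $g$ to $w$ that contributes non-negligibly to $F(g,w)$ must pass through a neighborhood of $h$ whose radius depends only on $N$, after which Ancona's telescoping argument produces the sharp multiplicative constant.

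The geometric heart is to construct this bottleneck despite the absence of global hyperbolicity. Both classes of groups act acylindrically on a hyperbolic space --- the maximal space $C\mathbf{S}$ for a hierarchically hyperbolic group, and the coned-off (or cusped) Cayley graph for a relatively hyperbolic group --- and the relevant feature of an $N$-Morse geodesic $\alpha$ is that its image in this space is a uniform quasigeodesic that makes no deep excursion into the non-hyperbolic regions (non-maximal domains, respectively peripheral cosets). Here the hypothesis that the parabolic subgroups have empty Morse boundary is precisely what guarantees that the Morse directions are genuinely the hyperbolic directions. Consequently the image of $h$ lies, with bounded Gromov-product control, between the images of $g$ and $w$, and hyperbolicity of the ambient space provides a shadow/cone around it separating the two images. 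Pulling this back produces a separating set $V_h\subset\Gamma$, of bounded width in the $\alpha$-direction, through which every trajectory of the walk from $g$ to $w$ must pass; finite support of $\mu$ makes bounded-thickness separating sets genuine cut sets.

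On the probabilistic side I would iterate this. Using that the walk makes definite progress in the hyperbolic direction one builds a nested family of separating sets --- concentric ``walls'' $W_1,\dots,W_m$ around $h$ with $m$ comparable to $\min(d_G(g,h),d_G(h,w))$ --- each of which every trajectory from $g$ to $w$ must cross, together with a uniform single-wall estimate: having crossed $W_i$ toward $w$, the walk re-crosses it with probability at most some $\theta<1$ independent of the data. Submultiplicativity across the walls then forces the conditioned trajectory to reach the innermost wall near $h$ with overwhelming probability, and a first-passage decomposition through that wall yields $F(g,w)\le C\,F(g,h)\,F(h,w)$, with all constants uniform by finiteness of the support.

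The main obstacle is exactly the single-wall estimate underlying the bottleneck. A point on a Morse geodesic does not separate $\Gamma$, and trajectories of the walk are not quasigeodesics, so one cannot naively assert that they track $\alpha$. What must be established is a deviation inequality: the probability that the walk travels from $g$ to $w$ while avoiding a bounded neighborhood of $h$ decays exponentially. This requires combining the Morse property of $\alpha$, which controls the hyperbolic direction, with exponential decay of the probability of large excursions transverse to it --- into the parabolic or lower-complexity directions --- and it is precisely in controlling these transverse excursions that the standing hypotheses on $\Gamma$, and the emptiness of the parabolic Morse boundary, become indispensable.
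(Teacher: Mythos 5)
Your overall architecture coincides with the paper's: the direction $d_G(g,w)\le d_G(g,h)+d_G(h,w)$ is the trivial Markov-property half, and everything reduces to showing that $h$ is a bottleneck, i.e.\ that for some $R$ depending only on the Morse gauge $N$ the trajectories from $g$ to $w$ avoiding $B_R(h)$ contribute at most a definite fraction of $G(g,w)$; a first-visit decomposition at $B_R(h)$ then gives $G(g,w)\lesssim G(g,h)G(h,w)$ exactly as in Corollary~\ref{corollaryAnconalinearprogress}. You also correctly identify the geometric input: an $N$-Morse geodesic has uniformly bounded projections to non-maximal domains (resp.\ peripheral cosets), hence by the distance formula makes linear progress in the hyperbolic space $X$ carrying the acylindrical action; this is Propositions~\ref{propMorselinearprogresshierarchicallyhyperbolic} and~\ref{propMorselinearprogressrelativelyhyperbolic}, and it is where the emptiness of the parabolic Morse boundaries enters.

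The proposal stops, however, at exactly the point where the content of the theorem lies. You name the deviation inequality as ``what must be established'' and call it the main obstacle, but you do not prove it, and the mechanism you sketch is not viable as stated. No subset of $\Gamma$ of ``bounded width in the $\alpha$-direction'' around $h$ separates $g$ from $w$: a genuinely separating wall pulled back from $X$ must contain entire product regions or peripheral cosets meeting $\alpha$ near $h$, so a trajectory can cross every one of your nested walls while remaining arbitrarily far from $h$ in the word metric, and a single-wall re-crossing estimate does not by itself exclude this. The paper's actual argument (Proposition~\ref{epsilonAnconaAcylindricallyhyperbolic}, following Mathieu--Sisto) is a path-surgery and weight-comparison: from a trajectory avoiding $B_{C_1S}(h)$ one extracts the excursion $\gamma$ between its last $C_0S$-approach to $\alpha$ before $h$ and its first one after $h$; Lemma~\ref{prop10.4MathieuSisto}, combined with the linear progress of $\alpha$ in $X$, forces $l(\gamma)\ge N\,d(g',h')$ with $N$ as large as desired; non-amenability (spectral radius $<1$, so $\mu^{*n}\le\theta^n$) then makes the total weight of such excursions exponentially small compared with the weight of a surgered trajectory tracking $\alpha$ between the same endpoints, and summing over the preimages of the surgery map yields $G(g,w;B_R(h)^c)\le\epsilon\,G(g,w)$. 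Some quantitative comparison of this kind --- or Ancona's relative-Green-function barrier argument carried out in half-spaces of $X$ rather than of $\Gamma$ --- is indispensable, and supplying it is the bulk of the proof; as written, your argument assumes it.
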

   
We do not know in general if the map constructed in Theorem \ref{maintheoremMorsetoMartin} is continuous for the Cashen-Mackay topology.
However, for relatively hyperbolic groups whose parabolic subgroups have empty Morse boundary, it is a homeomorphism on its image for this topology.
Indeed, \cite[Theorem~7.6]{CashenMacKay} shows that the Morse boundary endowed with this topology is embedded in the set of conical limit points in the Bowditch boundary, which in turns is embedded in the Martin boundary using results of \cite{GGPY}.
This leads to the following question.

\begin{question*}
Let $\Gamma$ be a non-elementary hierarchically hyperbolic group.
Is the map $\Phi$ a homeomorphism on its image for the Cashen--Mackay topology on the Morse boundary ?
\end{question*}

If this question has a negative answer, then we get a new metrizable topology on the Morse boundary, coming from the Martin boundary, which can be interesting in its own, at least from the perspective of random walks.

\medskip
We also investigate the connection between the Morse boundary and the Poisson boundary.
We prove that the image of the Morse boundary can be seen a Borel subset of the Martin boundary.
We can thus measure the Morse boundary with respect to the harmonic measure $\nu$.
We then prove the following.

\begin{theorem}\label{maintheoremmeasureMorse}
Let $\Gamma$ be a non-elementary hierarchically hyperbolic group or a non-elementary relatively hyperbolic group whose parabolic subgroups have empty Morse boundary.
Let $\mu$ be a probability measure on $\Gamma$ whose finite support generates $\Gamma$ as a semi-group and let $\nu$ be the corresponding harmonic measure on the Martin boundary.
Then $\nu(\partial_M\Gamma)=0$ unless $\Gamma$ is hyperbolic, in which case $\nu(\partial_M\Gamma)=1$.
\end{theorem}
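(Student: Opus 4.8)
The plan is to split along the stated dichotomy. If $\Gamma$ is hyperbolic, every geodesic is uniformly Morse, so $\partial_M\Gamma$ equals the Gromov boundary $\partial\Gamma$ as a set; by Ancona's theorem the Martin boundary is equivariantly homeomorphic to $\partial\Gamma$ and, under this identification, $\Phi$ is onto, whence $\nu(\partial_M\Gamma)=\nu(\partial_\mu\Gamma)=1$. All the work is therefore in the non-hyperbolic case, where I want to show $\nu(\partial_M\Gamma)=0$.

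So assume $\Gamma$ is not hyperbolic and, toward a contradiction, that $\nu(\partial_M\Gamma)>0$. First I would reduce to a single Morse gauge. Choosing a countable family of gauges $(N_k)_k$ that is cofinal among those realized in the (countable) Cayley graph, one has $\partial_M\Gamma=\bigcup_k\partial_M^{N_k}\Gamma$, and the proof of Theorem~\ref{maintheoremMorsetoMartin} shows each $\Phi(\partial_M^{N_k}\Gamma)$ is Borel; by countable subadditivity some $\nu(\Phi(\partial_M^{N_0}\Gamma))>0$. Because left translation is an isometry of the word metric it preserves $N_0$-Morseness, so $\partial_M^{N_0}\Gamma$ and its image are $\Gamma$-invariant. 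Consequently the event that the walk converges to an $N_0$-Morse point is invariant under modifying finitely many increments (such a modification only translates the limit point by a fixed group element), hence lies in the tail $\sigma$-field; Kolmogorov's $0$--$1$ law then upgrades its probability to $1$. Thus a single gauge $N_0$ already carries full harmonic measure.

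Next I would derive a contradiction from the geometry. The characterization of Morse geodesics gives a bound $B=B(N_0)$ such that every geodesic ray representing an $N_0$-Morse boundary point has uniformly bounded projections to all proper domains (in the relatively hyperbolic case: uniformly bounded excursions into peripheral cosets). Since $\Gamma$ is not hyperbolic it possesses an unbounded proper domain, respectively an infinite parabolic subgroup with empty Morse boundary, and I would show that $\nu$-almost every trajectory makes excursions of unbounded depth into (translates of) such a non-hyperbolic region. Indeed, at a positive proportion of times the support of $\mu$ permits an excursion of length at least $L$ into such a region, so a Borel--Cantelli argument forces the maximal excursion up to time $n$ to tend to infinity almost surely. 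Feeding this into the deviation inequality of Theorem~\ref{maintheoremMorseAncona} together with the sublinear tracking of a geodesic ray to the limit point, I would conclude that the geodesic representing $\nu$-a.e.\ limit point has a proper-domain projection exceeding $B$, contradicting $N_0$-Morseness. Hence $\nu(\partial_M\Gamma)=0$.

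The hard part is this last transfer: passing from unbounded excursions of the random trajectory to genuinely unbounded projections of the geodesic ray representing its limit point. For relatively hyperbolic groups this is available through the excursion estimates of \cite{GGPY}. For hierarchically hyperbolic groups it is exactly where the Green-metric additivity of Theorem~\ref{maintheoremMorseAncona} must do the work: one has to rule out that a deep excursion in a proper domain is an $o(n)$ detour that gets ``reabsorbed'' and becomes invisible at the boundary, and instead show that it is recorded by the limit point as an unbounded projection.
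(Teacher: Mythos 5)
Your overall strategy for the non-hyperbolic case is the same as the paper's second proof (the one that works for non-symmetric $\mu$): get positive harmonic measure on a single $N$-Morse stratum, use the uniformly bounded projections of $N$-Morse geodesics to proper domains, and contradict this with the fact that the random walk makes deep excursions into non-maximal domains (resp.\ peripheral cosets). But the step you yourself flag as ``the hard part'' --- transferring an unbounded excursion of the \emph{trajectory} into an unbounded projection of the \emph{geodesic ray to the limit point} --- is exactly the content of the proof, and you have not supplied it. Moreover the tool you propose for it (the Green-metric additivity of Theorem~\ref{maintheoremMorseAncona}) is not what makes this work: that inequality is a statement about Green functions along Morse geodesics and gives no control on how far a sample path strays from the geodesic to its limit. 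The paper instead does the transfer with two quantitative inputs you do not invoke: (i) the tightness lemma of Mathieu--Sisto (\cite[Lemmas~10.11--10.12]{MathieuSisto}), which says that for each fixed $k$, with probability at least $1-C\mathrm{e}^{-l/C}$ the point $X_k$ lies within bounded distance of a word geodesic from $e$ to $X_\infty$ (so a bounded projection of that geodesic forces a bounded projection of $X_k$); and (ii) the Sisto--Taylor estimate \cite[Theorem~2.3]{SistoTaylor} that for large $n$ some proper domain $U$ satisfies $d_U(\pi_U(e),\pi_U(X_n))\geq C^{-1}\log n$ with probability at least $1-\eta/4$. These two events of positive probability are incompatible, which is the contradiction. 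A Borel--Cantelli argument producing ``excursions of unbounded depth'' is not a substitute for (ii), and without (i) there is no bridge from the path to the geodesic. As it stands, the proposal is a plan with its central step missing.

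A secondary problem: your reduction to a single gauge with \emph{full} measure via Kolmogorov's $0$--$1$ law does not work as stated. The stratum $\partial_M^{N_0}\Gamma$ is defined by $N_0$-Morse rays based at $e$; translating such a ray by $g$ yields a ray based at $g$, and re-basing at $e$ degrades the gauge to some $N'$ depending on $|g|$ (\cite[Lemma~2.8]{Cordes}). So $\partial_M^{N_0}\Gamma$ is not $\Gamma$-invariant and ``converging to an $N_0$-Morse point'' is not a tail event. This is repairable --- the paper only needs $\nu_X(\partial_M^N\Gamma)\geq\eta>0$ for some $N$ and runs the contradiction between an event of probability $\geq\eta/2$ and one of probability $\geq 1-\eta/4$ --- but you should drop the upgrade to probability $1$. (The paper does use ergodicity, via Kaimanovich's double ergodicity theorem, but only for the full, genuinely $\Gamma$-invariant Morse boundary; it also gives an entirely different first proof for symmetric $\mu$ based on that double ergodicity, pairing two boundary points that project far apart in a non-hyperbolic region, which you do not attempt.)
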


Using results of Maher and Tiozzo \cite{MaherTiozzo}, one could prove
that the Morse boundary can be embedded in another realization of the
Poisson boundary, namely the Gromov boundary of a space on which the
group acyindrically acts. We want to emphase that the Poisson boundary
is a measure theoretical object and that we could not a priori deduce from it
that there exists a continuous map from the Morse boundary to the Martin
boundary.

\medskip
Let us also mention the following.
An acylindrically hyperbolic group $\Gamma$ may admit various non-elementary acylindrical actions on hyperbolic spaces.
A fruitful line of research initiated by Abbott \cite{Abbott} is to find the best possible one.
Such an action $\Gamma \curvearrowright X$ is called universal if for any element $g$ of $\Gamma$ such that there exists an acylindrical action of $\Gamma$ for which $g$ is loxodromic, the action of $g$ on $X$ also is loxodromic.
One can also introduce a partial order on cobounded acylindrical actions, see \cite{AbbottBalasubramanyaOsin}.
When existing, a maximal action for this partial order is called a largest acylindrical action.
Any largest action is necessarily a universal action and is unique.
Abbott, Behrstock and Durham \cite{AbbottBehrstockDurham} proved that any non-elementary hierarchically hyperbolic group admits a largest acylindrical action.
More precisely, they proposed a way to modify the hierarchical structure of the group so that the action of $\Gamma$ on $C\mathbf{S}$ is a largest acylindrical action, where $\mathbf{S}\in \mathfrak{S}$ is the maximal domain.
We will use this modified hierarchical structure in the following, see in particular the discussion after Proposition~\ref{propMorselinearprogressrelativelyhyperbolic}.

\subsection*{Organization of the paper}
In Section~\ref{SectionPoissonMartin}, we recall the precise definition of the Poisson boundary and the Martin boundary  and review known results about their identifications with geometric boundaries.

Section~\ref{Sectiondeviation} is devoted to the proof of Theorem~\ref{maintheoremMorseAncona}.
We first prove an enhanced version of deviation inequalities in acylindrically hyperbolic groups obtained in \cite{MathieuSisto}.
These inequalities basically state the conclusions of Theorem~\ref{maintheoremMorseAncona} hold, provided that $x,y,z$ are well aligned in the hyperbolic space $X$ on which the group acylindrically acts.
We then show that this condition is satisfied for any points $x,y,z$ on a Morse geodesic in a hierarchically hyperbolic group.

In Section~\ref{Sectionconstructionmap}, we use these inequalities to construct the map from the Morse boundary to the Martin boundary and we prove Theorem~\ref{maintheoremMorsetoMartin}.
The construction, adapted from \cite{KaimanovichErgodicity}, uses a bit of potential theory.
Once the map is constructed, injectivity is proved exactly like in hyperbolic groups.
On the other hand, the proof of continuity is new and different, since the constant $C$ in Theorem~\ref{maintheoremMorseAncona} depends on the Morse gauge, while it is fixed for hyperbolic groups.

Finally, in Section~\ref{Sectionmeasurezero}, we prove Theorem~\ref{maintheoremmeasureMorse}.
We actually give two proofs.
The first one basically only uses ergodicity of the harmonic measure and
it seems it could be adapted other contexts.
However, it only works for symmetric random walks, so
we give a second proof which is a bit more specific but which does not need such an assumption.

\subsection*{Acknowledgements} The authors would like to thank the organizers of Young Geometric Group Theory VII in Les Diablerets, Switzerland and the 3-manifolds and Geometric Group Theory conference in Luminy, France where part of this work was accomplished. The first author was supported by the ETH Zurich Postdoctoral Fellowship Program, cofunded by a Marie Curie Actions for People COFUND Program. The first author was also supported at the Technion by a Zuckerman STEM Leadership Fellowship and Israel Science Foundation (Grant 1026/15). The third author was partially supported by the National Science and Engineering Research Council of Canada (NSERC).

\section{Random walks and probabilistic boundaries}\label{SectionPoissonMartin}
\subsection{The Poisson boundary and the Martin boundary}
Consider a finitely generated group $\Gamma$ and a probability measure $\mu$ on $\Gamma$.
The random walk driven by $\mu$ is defined as $X_n=g_1...g_n$, where $g_k$ are independent random variables following the law of $\mu$.
We consider two probabilistic boundaries in this paper, the Poisson boundary and the Martin boundary.

\medskip
The Poisson boundary of a group $\Gamma$ endowed with a probability measure $\mu$ is the space of ergodic components for the time shift in the path-space of the associated random walk \cite{Erschlersurvey}, \cite{KaimanovichVershik}.
It is also isomorphic to a maximal measurable space endowed with a stationary probability measure $\lambda$ such that the random walk almost surely converges in the measure theoretical sense to a point in the boundary, that is, $X_n\cdot \lambda$ almost surely converges to a Dirac measure, see \cite{KaimanovichPoissonhyperbolic} and \cite{Furstenberg73}.
We emphasize that the Poisson boundary is a purely measure theoretical space, unlike the topological Martin boundary, which we now define.

\medskip
We introduced the Green metric in the introduction.
Let us give more details now.
The Green function associated with $\mu$ is defined as
$$G(g,h)=\sum_{n\geq 0}\mu^{*n}(g^{-1}h),$$
where $\mu^{*n}$ is the $n$th power of convolution of $\mu$.
Let $F(g,h)=\frac{G(g,h)}{G(e,e)}$.
Then, $F(g,h)$ is the probability of ever reaching $h$, starting the random walk at $g$, see for example \cite[Lemma~1.13~(b)]{Woess}.
The Green metric $d_G$ is then defined as
$$d_G(g,h)=-\log F(g,h).$$
When the measure $\mu$ is symmetric, this is indeed a distance, we refer to \cite{BlachereBrofferio} for more details, where this metric was first introduced.
The triangle inequality can be reformulated as
\begin{equation}\label{triangleGreen}
F(g_1,g_2)F(g_2,g_3)\leq F(g_1,g_3).
\end{equation}
In particular, for any $g_1,g_2,g_3$, we have
\begin{equation}\label{triangleGreen'}
G(g_1,g_2)G(g_2,g_3)\leq CG(g_1,g_3)
\end{equation}
for some uniform constant $C$.
Note that this inequality is always true, whether $\mu$ is symmetric or not, since it only states that the probability of reaching $g_3$ starting at $g_1$ is always bigger than the probability of first reaching $g_2$ from $g_1$, then $g_3$ from $g_2$.
The Martin boundary is then the horofunction boundary associated with the Green metric $d_G$ on $\Gamma$.
More precisely, introduce the Martin kernel $K(\cdot,\cdot)$ as
$$K(g,h)=\frac{G(g,h)}{G(e,h)}.$$
Then, the Martin compactification is a topological space $\overline{\Gamma}^{\mu}$ such that
\begin{enumerate}
\item the space $\Gamma$ endowed with the discrete topology is a dense and open space in $\overline{\Gamma}^{\mu}$,
\item letting $\partial_\mu\Gamma=\overline{\Gamma}^{\mu}\setminus \Gamma$, a sequence $g_n$ of elements of $\Gamma$ converges to a point in $\overline{\Gamma}^{\mu}$ if and only if $K(\cdot,g_n)$ converges pointwise to a function. If $\xi$ is the corresponding limit in $\partial_{\mu}\Gamma$, we will write $K_{\xi}$ for the corresponding limit function.
\end{enumerate}
The complement $\partial_\mu\Gamma$ of $\Gamma$ in the Martin compactification $\overline{\Gamma}^{\mu}$ is called the Martin boundary.
Both the Martin compactification and the Martin boundary are unique up to homeomorphism.
Moreover, they both are metrizable spaces.
This definition makes sense whether $\mu$ is symmetric or not and whether $d_G$ is a true distance or not.
We refer to \cite{Sawyer} for a detailed construction.

\medskip
One important aspect of the Poisson boundary and the Martin boundary is their connection with harmonic functions.
Recall that a function $f:\Gamma \to \R$ is called harmonic (with respect to $\mu$) if for every $g\in \Gamma$,
$$f(g)=\sum_{h\in \Gamma}\mu(g^{-1}h)f(h).$$
The following key theorem states that every positive harmonic function can be represented as an integral on the Martin boundary.
\begin{theorem*}[Martin representation theorem]\cite[Theorem~4.1]{Sawyer}
Let $\Gamma$ be a finitely generated group with a probability measure $\mu$ and assume that the random walk driven by $\mu$ is transient.
For every positive harmonic function $f$ on $(\Gamma,\mu)$, there exists a borelian measure $\nu_f$ on $\partial_\mu\Gamma$ such that for every $g\in \Gamma$,
$$f(g)=\int K_\xi(g)d\nu_f(\xi).$$
\end{theorem*}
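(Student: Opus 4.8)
The plan is to reconstruct the classical Poisson--Martin argument, realizing $f$ as a weak limit of explicit hitting measures pushed into the compact metrizable Martin compactification $\overline{\Gamma}^{\mu}$. Normalize so that $f(e)=1$. First I would record the Harnack inequalities that are automatic in this setting: since the finite support of $\mu$ generates $\Gamma$ as a semigroup, for each $g$ there is an $n$ with $\mu^{*n}(g^{-1}e)>0$ and an $n'$ with $\mu^{*n'}(e^{-1}g)>0$, and the iterated harmonicity $f(g)=\sum_h \mu^{*n}(g^{-1}h)f(h)$ then forces $c_g^{-1}\le f(g)\le c_g$ for a constant $c_g>0$ depending only on $g$. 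These two-sided bounds are what ultimately keep the approximating measures under control.

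Next I would set up a finite approximation via martingale stopping. Fix an exhaustion $B_1\subset B_2\subset\cdots$ of $\Gamma$ by finite sets with $\bigcup_n B_n=\Gamma$, and let $\tau_n$ be the first exit time of the walk from $B_n$. Because $\mu$ has finite support, $X_{\tau_n}$ takes only finitely many values, and by transience $\tau_n<\infty$ almost surely; hence $k\mapsto f(X_{k\wedge\tau_n})$ is a bounded martingale and optional stopping gives, for every $g\in B_n$,
\[
f(g)=\Ee_g\big[f(X_{\tau_n})\big]=\sum_{h}\Pp_g[X_{\tau_n}=h]\,f(h).
\]
I would then introduce the positive measures $\nu_n:=\sum_h \Pp_e[X_{\tau_n}=h]\,f(h)\,\delta_h$ on $\Gamma\subset\overline{\Gamma}^{\mu}$. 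Evaluating the displayed identity at $g=e$ and using $K(e,\cdot)\equiv 1$ shows that each $\nu_n$ has total mass exactly $f(e)=1$, so the sequence has uniformly bounded mass and, by weak-$*$ sequential compactness on the compact metric space $\overline{\Gamma}^{\mu}$, a convergent subsequence.

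The heart of the argument is to compare $\int K(g,\cdot)\,d\nu_n=\sum_h K(g,h)\,\Pp_e[X_{\tau_n}=h]\,f(h)$ with $f(g)=\sum_h \Pp_g[X_{\tau_n}=h]\,f(h)$, that is, to show that the error term $\sum_h\big(K(g,h)\,\Pp_e[X_{\tau_n}=h]-\Pp_g[X_{\tau_n}=h]\big)f(h)$ tends to $0$ as $n\to\infty$ for each fixed $g$. This is an approximate-multiplicativity statement: to exit a large ball at a far point $h$, the walk started at $g$ and the walk started at $e$ must traverse the same distant geometry, so the ratio $\Pp_g[X_{\tau_n}=h]/\Pp_e[X_{\tau_n}=h]$ is driven toward $G(g,h)/G(e,h)=K(g,h)$. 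Controlling this convergence uniformly enough against the bounded weights $\Pp_e[X_{\tau_n}=h]f(h)$, via the strong Markov property together with the Harnack bounds, is the main obstacle, and it is precisely where transience and the finite-range hypothesis enter.

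Granting this estimate, I pass to a weak-$*$ limit $\nu_n\to\nu$ along a subsequence. Since $h\mapsto K(g,h)$ extends continuously to $\overline{\Gamma}^{\mu}$ by the very definition of the Martin topology, weak-$*$ convergence gives $\int K(g,\cdot)\,d\nu_n\to\int K(g,\cdot)\,d\nu$, and combining this with the vanishing of the error term yields $f(g)=\int_{\overline{\Gamma}^{\mu}}K(g,\cdot)\,d\nu$. Finally, because the mass of $\nu_n$ is carried by the exit points of $B_n$, which eventually leave every finite set, we get $\nu(\{y\})=0$ for each $y\in\Gamma$, so $\nu$ is supported on $\partial_\mu\Gamma$; setting $\nu_f:=\nu$ gives $f(g)=\int K_\xi(g)\,d\nu_f(\xi)$, as required. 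As an alternative route one could bypass the explicit measures entirely and deduce existence from Choquet's theorem applied to the compact convex set of positive harmonic functions $h$ with $h(e)=1$ (compact in the topology of pointwise convergence by the Harnack bounds), identifying its extreme points with minimal Martin kernels $K_\xi$; the difficulty then migrates into that identification.
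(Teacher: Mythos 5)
The paper does not prove this statement; it is quoted verbatim from Sawyer's survey, so the only comparison available is with the classical proof. Your scaffolding --- exhaustion by finite sets, exit measures of total mass $f(e)$, weak-$*$ compactness of $\overline{\Gamma}^{\mu}$, continuity of $K(g,\cdot)$ on the compactification, and the observation that the mass eventually leaves every finite set so the limit lives on $\partial_\mu\Gamma$ --- is the right shape. But the step you yourself flag as the heart of the argument is a genuine gap. With your measures $\nu_n=\sum_h\Pp_e[X_{\tau_n}=h]f(h)\delta_h$ (the exit law of the Doob $f$-transform), the identity $f(g)=\int K(g,\cdot)\,d\nu_n$ fails at finite $n$, and showing the error vanishes amounts to proving $\Ee^f_e[K(g,X_{\tau_n})]\to f(g)$, which is essentially the probabilistic Fatou/convergence-to-the-boundary theorem --- in the standard development this is \emph{deduced from} the representation theorem, not used to prove it. Worse, the mechanism you invoke, that $\Pp_g[X_{\tau_n}=h]/\Pp_e[X_{\tau_n}=h]$ is driven to $K(g,h)$, is a boundary-Harnack/Ancona-type statement: by a last-exit decomposition this ratio is a ratio of Green functions \emph{killed outside $B_n$}, and its comparability to the ratio of full Green functions is exactly the kind of inequality the rest of this paper works hard to establish under strong geometric hypotheses. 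It cannot be the engine of a theorem valid for every transient walk.

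The classical proof chooses the approximating measures so that there is no error term at all. Let $f_n:=R_{B_n}[f]$ be the reduced function (balayage) of $f$ onto the finite set $B_n$, i.e.\ $f_n(g)=\Ee_g[f(X_{\sigma_n});\,\sigma_n<\infty]$ with $\sigma_n$ the hitting time of $B_n$. Since the walk is transient it visits the finite set $B_n$ only finitely often, so $f_n$ is a potential, $f_n=\sum_h G(\cdot,h)\mu_n(h)$ with $\mu_n\geq 0$ finitely supported; and $f_n=f$ on $B_n$ because $\sigma_n=0$ there. Setting $d\nu_n(h)=G(e,h)\,d\mu_n(h)$ gives the \emph{exact} identity $f(g)=\int K(g,\cdot)\,d\nu_n$ for all $g\in B_n$, with $\nu_n(\Gamma)=f(e)$; your weak-$*$ limit argument then applies verbatim, and the limit measure charges no point of $\Gamma$ (either because $\mu_n$ eventually avoids every fixed finite set, or because the atomic part on $\Gamma$ would contribute a potential dominated by the harmonic function $f$, hence vanishes by uniqueness of the Riesz decomposition). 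Your Choquet alternative is also a known route but, as you concede, relocates the entire difficulty into identifying extreme points with Martin kernels. Finally, the theorem as stated assumes only transience, not finite support, so you should not lean on finite range; irreducibility and the Harnack bound $K(g,\cdot)\leq 1/F(e,g)$ are all that is needed.
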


In general, the measure $\nu_f$ is not unique.
We restrict ourselves to the minimal boundary to obtain uniqueness.
A positive harmonic function $f$ on $(\Gamma,\mu)$ is called minimal if for every positive harmonic function $\tilde{f}$ satisfying $\tilde{f}\leq Cf$ for some constant $C$, $\tilde{f}=C'f$ for some constant $C'$.
The minimal boundary is defined as
$$\partial_\mu^{\min}\Gamma=\left \{\xi \in \partial_\mu\Gamma, K_\xi \text{ is harmonic and minimal}\right \}.$$
Then, for every positive harmonic function $f$, one can choose the measure $\nu_f$ giving full measure to $\partial_\mu^{\min}\Gamma$ and in this case, $\nu_f$ is unique, see \cite{Sawyer} for more details.

The main relation between the Poisson boundary and the Martin boundary is as follows.
\begin{theorem*}
Let $\Gamma$ be a finitely generated group with a probability measure $\mu$ and assume that the random walk $X_n$ driven by $\mu$ is transient.
Then, the random walk $X_n$ almost surely converges to a point in the Martin boundary.
Letting $X_{\infty}$ be the corresponding limit,
denote by $\nu$ the law of $X_{\infty}$ on $\partial_\mu\Gamma$.
Then, $(\partial_\mu\Gamma,\nu)$ is isomorphic as a measured space to the Poisson boundary.
\end{theorem*}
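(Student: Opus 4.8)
The plan is to establish the two assertions in turn: first the almost sure convergence of $X_n$ to the Martin boundary, and then the measure-theoretic identification. For the convergence I would begin with the Harnack inequality for $\mu$-harmonic functions, which is available precisely because the finite support of $\mu$ generates $\Gamma$ as a semigroup: it yields a bound $K(g,h)\leq C_g$, with $C_g$ depending only on the word distance from $g$ to $e$, uniformly in $h$. Consequently the Martin compactification $\overline{\Gamma}^{\mu}$ embeds into the compact product $\prod_{g}[0,C_g]$ and is itself compact, so $(X_n)$ has limit points in $\overline{\Gamma}^{\mu}$; transience forces the walk to leave every finite set, so all limit points lie in $\partial_\mu\Gamma$. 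To upgrade subsequential convergence to genuine convergence I would rule out oscillation: for any fixed positive harmonic function $h$, the process $h(X_n)$ is a nonnegative martingale for the filtration generated by the walk, since $\Ee[h(X_{n+1})\mid X_0,\dots,X_n]=\sum_{s}\mu(s)h(X_n s)=h(X_n)$ by harmonicity, and hence converges $\Pp_e$-almost surely by the martingale convergence theorem. Applying this to a countable family of harmonic functions separating the points of the minimal Martin boundary, and combining with compactness, pins down a single almost sure limit $X_\infty\in\partial_\mu^{\min}\Gamma$.

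Next I would identify $(\partial_\mu\Gamma,\nu)$ with the Poisson boundary, where $\nu$ is the law of $X_\infty$. Since $\mu$ is a probability measure, the constant function $1$ is harmonic, and the Martin representation theorem writes $1=\int K_\xi\, d\nu_1(\xi)$ with $\nu_1$ supported on the minimal boundary; one checks $\nu_1=\nu$ by recognizing both as the hitting distribution of the walk. The core of the identification is the correspondence between bounded $\mu$-harmonic functions and $L^\infty(\partial_\mu\Gamma,\nu)$: to a bounded measurable $\hat\phi$ one associates the Poisson integral $h_{\hat\phi}(g)=\int K_\xi(g)\hat\phi(\xi)\,d\nu(\xi)$, which is bounded and harmonic, and conversely one recovers $\hat\phi$ as the almost sure boundary value of $h_{\hat\phi}$. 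Because the Poisson boundary is exactly the measured space on which bounded harmonic functions are realized as such integrals, equivalently the maximal $\mu$-boundary in the sense recalled above, showing that this correspondence is a bijection identifies $(\partial_\mu\Gamma,\nu)$ with the Poisson boundary. Concretely, for bounded harmonic $h$ the martingale $h(X_n)$ converges almost surely to a limit of the form $\hat h(X_\infty)$, and taking conditional expectations yields the Poisson formula $h(g)=\Ee_g[\hat h(X_\infty)]=\int K_\xi(g)\hat h(\xi)\,d\nu(\xi)$.

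I expect the main obstacle to be the two convergence statements that make these boundary values well defined. Ruling out oscillation of the trajectory between distinct Martin points, and proving the Fatou-type statement that $h(X_n)$ converges to the boundary value $\hat h(X_\infty)$ rather than merely to some abstract martingale limit, both require more than the bare martingale convergence theorem: they rest on quantitative Harnack estimates together with the strong Markov property, used to compare the behaviour of the Green function along the trajectory with its radial limits. This is the technical heart of Poisson--Martin theory, carried out by Dynkin and Hunt and exposited for random walks on groups by Sawyer and by Woess; I would follow that route, exploiting the Harnack inequality granted by the finite generating support of $\mu$ to supply the required estimates.
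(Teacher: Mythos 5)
First, a point of comparison: the paper does not prove this statement at all --- it is quoted as classical background, with the reader referred to Sawyer and to Section~2.2 of Kaimanovich's survey --- so there is no internal argument to measure yours against. Your outline does follow the standard route of that literature: compactness of $\overline{\Gamma}^{\mu}$ via the Harnack-type bound $K(g,h)=G(g,h)/G(e,h)\leq 1/F(e,g)$ (which needs only irreducibility, not finite support), transience pushing all accumulation points of $X_n$ into $\partial_\mu\Gamma$, and, for the second assertion, the correspondence $h\leftrightarrow\hat h$ between bounded harmonic functions and $L^\infty(\partial_\mu\Gamma,\nu)$ via the Poisson formula $h(g)=\int K_\xi(g)\hat h(\xi)\,d\nu(\xi)$ together with the maximality characterization of the Poisson boundary. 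That half is the right argument, modulo the Fatou-type statement you correctly flag as the technical heart.

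The genuine gap is in your proof of almost sure convergence. You propose to deduce it from martingale convergence of $u(X_n)$ for a countable family of positive harmonic functions $u$ separating points of the minimal Martin boundary. This does not work as stated. Convergence of $X_n$ in $\overline{\Gamma}^{\mu}$ means pointwise convergence of $g\mapsto K(g,X_n)=G(g,X_n)/G(e,X_n)$, a quantity depending on the \emph{second} argument of the Green function; it is not a harmonic or superharmonic function of the walker's position, so the martingale convergence theorem does not apply to it. Conversely, a positive harmonic function does not extend continuously to the Martin compactification --- its boundary values exist only $\nu$-almost everywhere, in the Fatou--Na\"im--Doob sense, which is essentially what one is trying to prove --- so knowing that $u(X_n)$ converges for a separating family does not locate the accumulation points of $X_n$ in the compactification; the argument is circular. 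The classical proof (Hunt's theorem, as presented in Sawyer's survey or Woess's book) instead establishes directly, for each fixed $g$, a Doob-type upcrossing inequality for the numerical sequence $K(g,X_n)$, using the strong Markov property and first-passage decompositions of the Green function, and then takes a countable intersection over $g$. Your closing paragraph acknowledges that this is where the work lies, but as written the first assertion of the theorem is not proved; you should either carry out that upcrossing estimate or cite it outright, as the paper itself does.
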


For more details on the connection between the two boundaries, we refer to the survey \cite{Kaimanovichsurvey}, and in particular to \cite[Section~2.2]{Kaimanovichsurvey}.

\subsection{Comparing the boundaries}
Trying to identify the Poisson or the Martin boundary with a geometric boundary has been a fruitful line of research, initiated by the work of Furstenberg \cite{Furstenberg63}, \cite{Furstenberg73}.
A landmark result in this direction is the Kaimanovich criterion, stated in \cite{KaimanovichPoissonhyperbolic} which allows one to identify the Poisson boundary for some classes of measures $\mu$, see \cite[Theorem~6.4]{KaimanovichPoissonhyperbolic} for more details.

This criterion applies in many situations.
For any hyperbolic group $\Gamma$ and any probability measure $\mu$, whose support generates $\Gamma$ as a semi-group, the random walk almost surely converges to a point in the Gromov boundary of $\Gamma$.
When $\mu$ has finite entropy and finite logarithmic first moment, one can use the Kaimanovich criterion to prove that the Gromov boundary endowed with the corresponding limit measure is a model for the Poisson boundary.
More generally, Maher and Tiozzo \cite{MaherTiozzo} proved that for any group $\Gamma$ acting on a hyperbolic space $X$, for any non-elementary probability measure $\mu$ on $\Gamma$, the image of the random walk in $X$ almost surely converges to a point in the Gromov boundary $\partial X$ of $X$.
Moreover, if the action is acylindrical, they used the Kaimanovich criterion to prove that whenever $\mu$ has finite entropy and finite logarithmic first moment, $\partial X$ endowed with the corresponding limit measure is a model for the Poisson boundary.
In particular, for all the groups we consider in this paper, the Poisson boundary is identified with a geometric boundary, namely the Gromov boundary on any hyperbolic space $X$ the group acylindrically acts on.

\medskip
On the other hand, as explained in the introduction, identifying the Martin boundary is a much more difficult task.
Ancona proved in \cite{AnconaMartinhyperbolic} that for every hyperbolic group $\Gamma$ and every probability measure $\mu$ whose finite support generates $\Gamma$ as a semi-group, the Martin boundary is homeomorphic to the Gromov boundary.

Recently, Gekhtman, Gerasimov, Potyagailo and Yang proved that for finitely supported measures on a relatively hyperbolic group, the Martin boundary always covers the Bowditch boundary \cite{GGPY}.
The preimage of a conical limit point is always reduced to a point and actually, conical limit points embed into the Martin boundary.
Note that whenever the parabolic subgroups have empty Morse boundary, the Morse boundary of the group can be seen as a subset of conical limit points, so results of \cite{GGPY} provide a more direct proof of our result in this context.

It is expected that the Martin boundary is bigger than the Bowditch boundary and that the preimage of a parabolic limit point is the Martin boundary of the induced walk on the corresponding parabolic subgroup.
This is proved for hyperbolic groups with respect to virtually abelian groups in \cite{DGGP}.

To the authors' knowledge, not much is known in general about the Martin boundary of a hierarchically hyperbolic group, even for mapping class groups.
Note however that Kaimanovich and Masur \cite{KaimanovichMasur} used the Kaimanovich criterion to show that Thurston's PMF boundary of Teichm\"uller space is a model for the Poisson boundary of the mapping class group, and the stationary measure therein gives full weight to endpoints inside the PMF boundary of Teichm\"uller geodesics recurring to a fixed subset of Teichm\"uleer space.
It seems reasonable to conjecture that this set of recurrent foliations (which may be considered the direct analogue of conical limit points for relatively hyperbolic groups) can be embedded into the Martin boundary, which would provide a direct proof that the Morse boundary embeds into the Martin boundary in this context.
Our result can be viewed as a small step in this direction.

\section{Deviation inequalities}\label{Sectiondeviation}
\subsection{Global-Ancona inequalities in acylindrically hyperbolic groups}
Consider a finitely generated group $\Gamma$ acting acylindrically on a hyperbolic space $X$.
If $\mathcal{S}$ is a finite generating set for $\Gamma$, write $d_{\mathcal{S}}$ for the word distance associated with $\mathcal{S}$ or simply $d$ whenever $\mathcal{S}$ is fixed.
Also, given the choice of a fixed point $o\in X$,
write $d_X$ for the induced distance in $\Gamma$, that is
$d_X(g,h)=d_X(g\cdot o,h\cdot o)$.
A finite sequence $\alpha=g_1,...,g_n$ of points in $\Gamma$ is called a path if $g^{-1}_{i}g_{i+1}\in  \mathcal{S}$ for each $i$. It's length in the word metric induced by $(\Gamma, S)$ will be denoted $l_{\Gamma}(\alpha)$.

Recall the following definition from \cite{MathieuSisto}.
\begin{definition}
Fix a finite generating set $\mathcal{S}$ for $\Gamma$ and let $g,h\in \Gamma$ and $\alpha$ be a $d_\mathcal{S}$ word geodesic from $g$ to $h$.
Let $T,S\geq 1$.
Then, a point $p$ on $\alpha$ is called a $(T,S)$-linear progress point if for every $p_1,p_2$ on $\alpha$ such that $p_1,p,p_2$ are aligned in this order and such that $d(p,p_1)\geq S$, $d(p,p_2)\geq S$, we have
$$d(p_1,p_2)\leq Td_X(p_1,p_2).$$
\end{definition}

Whenever $f$ and $g$ are two functions such that there exists $C$ such that $\frac{1}{C}f\leq g \leq C f$, we write $f\asymp g$.
When the implied constant $C$ depends on some parameters, we will avoid this notation, except if the dependency is clear from the context.
Also, whenever there exists $C$ such that $f\leq C g$, we will write $f\lesssim g$.
Ancona inequalities were stated using the Green metric in the introduction, but notice that one can reformulate them as follows.
If $\Gamma$ is Gromov hyperbolic, then for any $x,y,z$ aligned on a geodesic, we have
\begin{equation}\label{Ancona}
G(x,z)\asymp G(x,y)G(y,z).
\end{equation}
Our goal in this section is to prove Ancona-type inequalities for linear progress points on a word-geodesic.

We first introduce some notations.
We consider a trajectory for the random walk $\beta=(\beta_0,...,\beta_n)$ of length $n$.
We write
$$W(\beta)=\mu(\beta_0^{-1}\beta_1)...\mu(\beta_{n-1}^{-1}\beta_n)$$
and we call $W(\beta)$ the weight of the trajectory $\beta$.
Also, given a collection $\mathcal{T}$ of trajectories for the random walk, we write
$$W(\mathcal{T})=\sum_{\beta \in \mathcal{T}}W(\beta).$$
In particular, letting $\mathcal{T}_n(g,h)$ be the collection of trajectories of length $n$ from $g$ to $h$, we can rewrite the Green function from $g$ to $h$ as
$$G(g,h)=\sum_{n\geq 0}W(\mathcal{T}_n(g,h)).$$

Also, for a subset $A$ of $\Gamma$, we denote by $G(g,h;A)$ the contribution to $G(g,h)$ of trajectories all of whose points lie in $A$, with the possible exception of the endpoints.
In other words,
$$G(g,h;A)=\delta_{g,h}+\sum_{n\geq 1}\sum_{g_1,...,g_{n-1}\in A}\mu(g^{-1}g_1)\mu(g_1^{-1}g_2)...\mu(g_{n-1}^{-1}h),$$
where $\delta_{g,h}=0$ if $g\neq h$ and $\delta_{g,h}=1$ if $g=h$.


\begin{proposition}\label{epsilonAnconaAcylindricallyhyperbolic}
Let $\Gamma$ be a finitely generated acting acylindrically on a hyperbolic space $X$.
For every $T,S\geq 1$ and every $\epsilon>0$ , there exists $C_1\geq 0$ such that the following holds.
Let $g,h\in \Gamma$ and let $\alpha$ be a geodesic connecting $g$ to $h$.
Let $p$ be a $(T,S)$-linear progress point on $\alpha$.
Then,
$$G(g,h;B_{C_1S}(p)^c)\leq \epsilon G(g,h).$$
\end{proposition}

We will need the following geometric lemma.
\begin{lemma}\cite[Proposition~10.4]{MathieuSisto}\label{prop10.4MathieuSisto}
Let $\Gamma$ be a finitely generated group with a fixed finite generating set. 
Assume that $\Gamma$ acts acylindrically on a hyperbolic space $X$.
For any $L$, there exist a constant $C$ and a diverging function $\rho:\mathbb{R}_+\to \mathbb{R}_+$ such that the following holds.
Let $\alpha_1$ and $\alpha_2$ be two $L$-Lipschitz paths in the Cayley graph of $\Gamma$.
Write $g_1,h_1$, respectively $g_2,h_2$ the endpoints of $\alpha_1$, respectively $\alpha_2$.
Then,
$$\max \{l_{\Gamma}(\alpha_1),l_{\Gamma}(\alpha_2)\}\geq \bigg (d_X(g_1,h_1)-d_X(g_1,g_2)-d_X(h_1,h_2)-C\bigg )\rho(d_{\Gamma}(\alpha_1,\alpha_2)).$$
\end{lemma}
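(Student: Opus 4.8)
The plan is to push the whole configuration into $X$ through the orbit map and then let the acylindricity axiom supply the extra length. Using the left-invariance of $d_\Gamma$ and the fact that $\Gamma$ acts on $X$ by isometries (so that both sides of the inequality are unchanged under left translation), I would first normalize so that $g_1=e$. Setting $K=\max_{s\in\mathcal{S}}d_X(o,s\cdot o)$, the orbit map is $K$-Lipschitz, so the images $\bar\alpha_1,\bar\alpha_2$ are $KL$-Lipschitz paths in $X$ joining $g_1\cdot o$ to $h_1\cdot o$ and $g_2\cdot o$ to $h_2\cdot o$ respectively. The inequality is vacuous unless $\Delta:=d_X(g_1,h_1)-d_X(g_1,g_2)-d_X(h_1,h_2)-C>0$, so I assume $\Delta>0$ and fix a geodesic $\gamma$ from $g_1\cdot o$ to $h_1\cdot o$. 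By $\delta$-thinness of triangles together with the endpoint bounds $d_X(g_1,g_2)$ and $d_X(h_1,h_2)$, a sub-segment $\gamma'\subseteq\gamma$ of length at least $\Delta$ which $O(\delta)$-fellow-travels a geodesic from $g_2\cdot o$ to $h_2\cdot o$ (the discarded end pieces and the thinness constant being absorbed into $C$); this lets me work with the single geodesic $\gamma'$. Note the baseline bound $l_\Gamma(\alpha_i)\ge d_X(g_1,h_1)/K$ is immediate, since the $X$-length of $\bar\alpha_i$ dominates the distance between its endpoints; the real content is the additional factor $\rho(D)$, with $D:=d_\Gamma(\alpha_1,\alpha_2)$, which has to come from the separation of the two paths in the word metric.

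The engine is acylindricity: for the threshold $R_1=O(\delta)$ there are constants $M=M(R_1)$ and $N=N(R_1)$ such that at most $N$ elements $u$ satisfy $d_X(x,ux)\le R_1$ and $d_X(y,uy)\le R_1$ once $d_X(x,y)\ge M$. Crucially, the paths are only $L$-Lipschitz, hence not quasi-geodesics, so they may stray arbitrarily far from $\gamma'$, and the argument must run a dichotomy. Subdivide $\gamma'$ into $k\asymp\Delta/M$ segments of $X$-length $M$ with division points $\gamma(s_0),\dots,\gamma(s_k)$. Over each level $s_i$ at which both projected paths come within $R_1$ of $\gamma$, I would choose synchronized points $p_i\in\alpha_1$ and $q_i\in\alpha_2$ with $p_i\cdot o,q_i\cdot o$ near $\gamma(s_i)$; the relating elements $u_i:=p_i^{-1}q_i$ then have bounded displacement $d_X(o,u_i\cdot o)\le 2R_1$ while, by the separation, $d_\Gamma(e,u_i)=d_\Gamma(p_i,q_i)\ge D$. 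Comparing these relating elements across different levels is meant to produce group elements coarsely fixing two points at $X$-distance $\ge M$, to which the bound $N$ applies; this caps the number of distinct near-$\gamma$ coarse configurations the two separated paths can realize. Consequently a path cannot remain $R_1$-close to $\gamma'$ over the whole range while staying $D$-separated from its partner: over each sub-segment it must either leave the $R_1$-neighborhood of $\gamma$, forcing a detour that lengthens its $X$-projection and hence $l_\Gamma$, or traverse the coarse fibers at a combinatorial cost I can bound from below. Summing over the $k\asymp\Delta/M$ sub-segments converts $\Delta$ into a multiplicative factor and yields $\max\{l_\Gamma(\alpha_1),l_\Gamma(\alpha_2)\}\gtrsim\Delta\cdot\rho(D)$.

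The diverging function $\rho$ should then emerge by inverting the acylindricity count as a function of $D$: a larger word-metric separation forces the relating elements $u_i$ to be longer, and since only $N$ elements can coarsely fix two $M$-separated points, packing a separation of size $D$ into the thin region around $\gamma$ demands proportionally more length per unit of $\gamma$-progress, giving some $\rho(D)\to\infty$ depending only on $L$, on $\delta$, and on the acylindricity data $M(\cdot),N(\cdot)$. The main obstacle is exactly this quantitative conversion, for two reasons. First, because the paths need not be quasi-geodesics, the wandering cost and the acylindricity cost have to be estimated on complementary portions and then glued into a single lower bound; controlling the transition between \emph{deep} excursions away from $\gamma$ and \emph{shallow} stretches near it is the delicate book-keeping. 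Second, and more fundamentally, manufacturing from the bare separation datum $D$ a genuine family of elements that coarsely fix two points at distance $\ge M$, so that the bound $N$ truly bites and yields divergence rather than a mere constant, is the technical heart; here one must use the uniformity of the fellow-traveling of $\gamma'$ together with the normalization $g_1=e$, and it is the step I expect to require the most care.
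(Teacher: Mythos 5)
Note first that the paper contains no proof of this statement: it is quoted verbatim from Mathieu--Sisto \cite[Proposition~10.4]{MathieuSisto}, so there is no internal argument to compare against and your attempt must be judged on its own terms. As it stands it has a genuine gap, one you yourself flag as ``the technical heart''. Acylindricity gives, for a threshold $\epsilon=2R_1$, constants $M,N$ bounding the \emph{cardinality} of $\{u\in\Gamma : d_X(x,u\cdot x)\le\epsilon,\ d_X(y,u\cdot y)\le\epsilon\}$ whenever $d_X(x,y)\ge M$; it says nothing about the \emph{word length} of those elements, and these finite sets vary with $(x,y)$ with no uniform bound on the word lengths of their members (nor can there be one, since $\rho$ is only required to diverge, possibly arbitrarily slowly). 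Your relating elements $u_i=p_i^{-1}q_i$ coarsely fix only a single point (the basepoint, after translating by $p_i^{-1}$), and the step ``comparing these relating elements across different levels is meant to produce group elements coarsely fixing two points at $X$-distance $\ge M$'' is exactly the missing argument: the natural candidates $c_i=q_ip_i^{-1}$ coarsely fix $p_i\cdot o$ only, and one obtains two-point coarse fixing only when $c_i=c_j$ for far-apart levels, which need never occur. Moreover, ``inverting the acylindricity count as a function of $D$'' cannot produce $\rho$: the bound $N$ is a fixed constant independent of $D$, so there is no count to invert, and the divergence of $\rho$ has to be manufactured by a different mechanism than the one you describe.

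There is also a quantitative flaw in your dichotomy. With the threshold fixed at $R_1=O(\delta)$, the ``deep'' branch yields only a bounded cost per level: by the standard exponential-divergence lemma, a subpath crossing a level while avoiding the ball $B(\gamma(s_i),R_1)$ has length at least $2^{(R_1-1)/\delta}$, a constant. Consequently a pair of paths hovering at $X$-distance slightly more than $R_1$ from $\gamma$ is never caught by your ``shallow'' case, produces no relating elements at all, and is charged only $O(\Delta)$ in total, independently of $D$; your two branches therefore cannot yield the multiplicative factor $\rho(D)$ with $\rho$ diverging. For every unit of progress along $\gamma'$ to be charged $\rho(D)$, the corridor radius must grow with $D$ (roughly $R\approx\delta\log_2\rho(D)$, so that excursions also cost $\rho(D)$), and then the acylindricity constants $M(2R),N(2R)$ themselves depend on $D$, which changes the bookkeeping you set up. In short, your skeleton (orbit map, thin-triangle reduction to a common geodesic segment, subdivision at the acylindricity scale, a near/far dichotomy) is a reasonable opening, but both the engine producing two-point coarse stabilizers and the construction of a genuinely diverging $\rho$ are absent, and the fixed-radius dichotomy cannot be repaired without restructuring; for the complete argument one should consult the proof in \cite{MathieuSisto} itself.
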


We can now prove Proposition~\ref{epsilonAnconaAcylindricallyhyperbolic}, which is a refinement of \cite[Lemma~12.3]{MathieuSisto}.
Looking carefully, one can see that our statement is actually proven there.
We still rewrite the complete proof for convenience.
\begin{proof}
We first introduce some notations.
Recall that the support of $\mu$ is finite, so there exists $0<q<1$ such that for any trajectory $\beta$ of the random walk of length $n$, we have $W(\beta)\geq q^n$.
Also recall that the support of $\mu$ generates $\Gamma$ as a semi-group.
In particular, there exists $\Lambda$ such that a trajectory for the random walk of minimal length connecting points at distance $l$ has length at most $\Lambda l$.
Since $\Gamma$ is non-amenable, the spectral radius of the random walk, which is the radius of convergence of the Green function is bigger than 1, see \cite{Kesten}.
Hence, there exists $\theta<1$ such that for any $g\in G$, for any $n$, $\mu^{*n}(g)\leq \theta^n$.
In particular, given $g,h\in \Gamma$,
$$G(g,h)=\sum_{n\geq 0}\mu^{*n}(g^{-1}h)=\sum_{n\geq \Lambda d(g,h)}\mu^{*n}(g^{-1}h)\leq K\theta^{\Lambda d(g,h)}$$
for some $K$.
More generally, if $g$ and $h$ are fixed and if $\beta$ is a collection of trajectories for the random walk from $g$ to $h$ of length at least $n_0$, we have
\begin{equation}\label{weightofalongpath}
W(\beta)\leq K\theta^{n_0}.
\end{equation}
Finally, since $\Gamma$ has at most exponential growth, there exists $v$ such that for any $R$
the cardinality of a ball of radius $R$ is at most $\mathrm{e}^{vR}$.

We will need to use three constants $N$, $C_0$ and $C_1$.
To make it easier to understand, we explain now how we choose these constants.
We will choose $N$ large enough, that will only depend on the random walk (precisely, it will depend on $\theta$ and on $\Lambda$).
We will then choose $C_0$ that will depend on $N$ and $T$.
Finally we will choose $C_1$ that will depend on $N$, $C_0$, $S$, $T$ and $\epsilon$.

Precisely, we choose $N$ such that
$\theta^N/q^\Lambda \leq 1/2$.
We then choose $C_0$ such that
$\rho(t)\geq 2NT$ for every $t\geq C_0$, where the function $\rho$ is given by Lemma~\ref{prop10.4MathieuSisto}.
We also assume that $C_0\geq 2\Lambda$.
Finally, we choose $C_1$ such that the following conditions hold.
First, $C_1\geq 3C_0$ and then for every $t\geq 2C_1S-4C_0S$, we have
\begin{enumerate}[(a)]
\item $t-2C_0 \geq \frac{t}{2}+T(2C_0S+C)$, where $C$ is the constant given by Lemma~\ref{prop10.4MathieuSisto},
\item $t+2C_0S\leq 2t$,
\item $q^{-4\Lambda C_0S}\mathrm{e}^{2v\Lambda C_0S}2^{-t}\leq \frac{\epsilon}{K}$, where $K$ is the constant in~(\ref{weightofalongpath}).
\end{enumerate}

Let $g,h,p$ be as in the statement of the proposition.
Given any trajectory $\beta$ for the random walk from $g$ to $h$ that avoids a large ball around $p$, we first construct a long sub-trajectory $\gamma$ as follows.
Let $\beta=(w_0,...,w_n)$, $w_0=g$ and $w_n=h$ and assume that $\beta$ avoids the ball of radius $C_1S$ centered around $p$.
Consider the last point $g'$ on the trajectory $\beta$ which is within a distance at most $C_0S$ from a point $g''$ on the geodesic $\alpha$ and such that $g$, $g''$ and $p$ are aligned in this order.
Similarly, consider the first point $h'$ on $\beta$ after $g$' which is within a distance at most $C_0S$ from a point $h''$ on $\alpha$, where $p$, $h''$ and $h$ are aligned in this order.

Let $\gamma$ be the sub-trajectory of $\beta$ starting at $g'$ and ending at $h'$.
Notice that $g'\neq h'$.
Indeed, a point $g_0$ on $\beta$ cannot be simultaneously $C_0S$-close to points $g_1$, respectively $g_2$, on the geodesic $\alpha$ such that $g,g_1,p$, respectively $p,g_2,h$ are aligned in this order.
Otherwise, one would have
$$d(g_0,p)\leq d(g_0,g_1) +d(g_1,p) \leq C_0S+ d(g_1,g_2)\leq 3C_0S<C_1S,$$
which is a contradiction since $\beta$ stays outside $B_{C_1S}(p)$.
Also, $\gamma$ only intersects the $C_0S$-neighborhood of the geodesic $\alpha$ at $g'$ and $h'$.
Indeed if one point $g_0$ of $\gamma$ was $C_0S$-close to $\alpha$, then there would be a point $g_1$ on $\alpha$ with $d(g_0,g_1)\leq C_0S$.
If $g,g_1,p$ were aligned in this order, this would contradict the definition of $g'$. If not, this would contradict the definition of $h'$.

\begin{center}
\begin{tikzpicture}[scale=.8]
\fill[gray!50] (-4.8,-.5) to[out=180, in=180] (-4.8,.5) -- (4.8,.5) to[out=0, in=0] (4.8,-.5) -- (-4.8,-.5) -- cycle ;
\draw (-5,0)--(5,0) ;
\draw (2,0) arc(0:360:2) ;
\draw[dashed, thick] (0,0)--(1.414,1.414) ;
\draw[dashed, thick] (1,0)--(1,-.5) ;
\draw[dotted, thick] (-3.5,.5)--(-3.5,0) ;
\draw[dotted, thick] (3.2,.5)--(3.2,0) ;
\draw plot[smooth] coordinates {(-4.5,0) (-4.2,.3) (-3.9,-.2) (-3.5,.5) (-2.8,.7) (-2.2,1.2) (-2,1) (-1.5,2.7) (-.5,2.2) (.5,2.3) (1,2) (1.5,1.6) (2,1) (2.9,.8) (3.2,.5) (3.6,.3) (4.2,.6) (4.5,0)} ;
\draw (-4.5,-.3) node{$g$} ;
\draw (4.5,-.3) node{$h$} ;
\draw (-1,.2) node{$\alpha$} ;
\draw (0,-.3) node{$p$} ;
\draw (-3.6,.7) node{$g'$} ;
\draw (3.3,.7) node{$h'$} ;
\draw (-3.5,-.25) node{$g''$} ;
\draw (3.3,-.3) node{$h''$} ;
\draw (-2,1.6) node{$\gamma$} ;
\draw (1.5,-.3) node{$C_0S$} ;
\draw (.2,.7) node{$C_1S$} ;
\end{tikzpicture}
\end{center}

Denote by $l(\gamma)$ the length of $\gamma$ in $\Gamma$.
Then, Lemma~\ref{prop10.4MathieuSisto} shows that
\begin{equation}\label{lengthgamma}
\mathrm{max}(l(\gamma),d(g'',h''))\geq (d_X(g'',h'')-d_X(g',g'')-d_X(h',h'')-C)\rho(C_0S).
\end{equation}

Note that $d(g',h')\geq d(g'',h'')-2C_0S$ and since $g'',p,h''$ are aligned in this order,
$$d(g',h')\geq d(g'',p)+d(p,h'')-2C_0S\geq d(g',p)+d(h',p)-4C_0S$$
so finally
$$d(g',h')\geq 2C_1S-4C_0S.$$
In particular, according to Condition~(a) above,
$$\frac{d(g',h')-2C_0}{T} \geq \frac{d(g',h')}{2T}+2C_0S+C.$$
Since $p$ is a $(T,S)$-linear progress point,
$$d_X(g'',h'')\geq \frac{d(g'',h'')}{T}\geq \frac{d(g',h')-2C_0S}{T}\geq \frac{d(g',h')}{2T}+2C_0S+C.$$
Since $d_X(\cdot,\cdot)\leq d(\cdot,\cdot)$ and $d(g',g'')\leq C_0S$, $d(h',h'')\leq C_0S$,~(\ref{lengthgamma}) yields
$$\mathrm{max}(l(\gamma),d(g'',h''))\geq\frac{d(g',h')}{2T}\rho(C_0S).$$
According to the condition defining $C_0$, we get
$$\mathrm{max}(l(\gamma),d(g'',h''))\geq Nd(g',h').$$
Finally, notice that $d(g'',h'')\leq d(g',h')+2C_0S$ so Condition~(b) above shows that
$d(g'',h'')\leq 2d(g',h')$ and so $d(g'',h'')<Nd(g',h')$.
We thus get
\begin{equation}\label{lengthgamma2}
l(\gamma)\geq Nd(g',h')\geq N(2C_1S-4C_0S).
\end{equation}

We now construct a trajectory $\hat{\beta}$ for the random walk that will replace $\beta$ as follows.
Consider a trajectory $\hat{\gamma}_1$ of minimal length from $g'$ to the geodesic $\alpha$ and denote by $\hat{g}'$ the endpoint of this trajectory on $\alpha$.
Similarly, consider a trajectory $\hat{\gamma}_2$ of minimal length from $\alpha$ to $h'$ with initial point $\hat{h}'$.
Note that one can find a trajectory from $g'$ to $g''$ of length at most $\Lambda C_0S$ and similarly with $h'$ and $h''$, so that in particular
$d(g',\hat{g}')\leq \Lambda C_0S$ and
$d(h',\hat{h}')\leq \Lambda C_0S$.
Also, $\hat{g}'$, $p$ and $\hat{h}'$ are aligned in this order.
Now, consider trajectories of minimal length connecting successive points on the sub-geodesic of $\alpha$ from $\hat{g'}$ to $\hat{h}'$ and denote by $\hat{\gamma}_3$ the concatenation of these trajectories.
Denote by $\hat{\gamma}$ the concatenation of $\hat{\gamma}_1$, $\hat{\gamma}_3$ and $\hat{\gamma}_2$.
Then, $\hat{\gamma}$ is a trajectory for the random walk starting at $g'$ that joins the geodesic $\alpha$, roughly follows it and then goes to $h'$.
Notice that $\hat{\gamma}$ stays in the $\Lambda$-neighborhood of $\alpha$.
Finally, let $\hat{\beta}$ be the concatenation of the sub-trajectory $\hat{\beta}_1$ from $g$ to $g'$, the trajectory $\hat{\gamma}$ and the sub-trajectory $\hat{\beta}_2$ from $h'$ to $h$.

\begin{center}
\begin{tikzpicture}[scale=.8]
\fill[gray!50] (-4.8,-.5) to[out=180, in=180] (-4.8,.5) -- (4.8,.5) to[out=0, in=0] (4.8,-.5) -- (-4.8,-.5) -- cycle ;
\draw (-5,0)--(5,0) ;
\draw (2,0) arc(0:360:2) ;
\draw plot[smooth] coordinates {(-4.5,0) (-4.2,.3) (-3.9,-.2) (-3.5,.5)} ;
\draw[dotted] plot[smooth] coordinates {(-3.5,.5) (-2.8,.7) (-2.2,1.2) (-2,1)  (-1.5,2.7) (-.5,2.2) (.5,2.3) (1,2) (1.5,1.6) (2,1) (2.9,.8) (3.2,.5)} ;
\draw plot[smooth] coordinates {(3.2,.5) (3.6,.3) (4.2,.6) (4.5,0)} ;
\draw plot[smooth] coordinates {(-3.5,.5) (-3.3,.3) (-3.5,.1) (-3.4,0)} ;
\draw plot[smooth] coordinates {(-3.4,0) (-3.15,-.2) (-2.9,0) (-2.65,.2) (-2.4,0) (-2.15,-.2) (-1.9,0) (-1.65,.2) (-1.4,0) (-1.15,-.2) (-.9,0) (-.65,.2) (-.4,0) (-.15,-.2) (.1,0) (.35,.2) (.6,0) (.85,-.2) (1.1,0) (1.35,.2) (1.6,0) (1.85,-.2) (2.1,0) (2.35,.2) (2.6,0) (2.85,-.2) (3.1,0)} ;
\draw plot[smooth] coordinates {(3.1,0) (3.3,.2) (3.1,.3) (3.2,.5)} ;
\draw (-4.5,-.3) node{$g$} ;
\draw (4.5,-.25) node{$h$} ;
\draw (.1,-.3) node{$p$} ;
\draw (-3.6,.7) node{$g'$} ;
\draw (3.3,.7) node{$h'$} ;
\draw (-3.5,-.25) node{$\hat{g}'$} ;
\draw (3.3,-.25) node{$\hat{h}'$} ;
\draw (-4.7,.3) node{$\hat{\beta}_1$} ;
\draw (.6,.3) node{$\hat{\gamma}$} ;
\draw (4.7,.3) node{$\hat{\beta}_2$} ;
\end{tikzpicture}
\end{center}

This construction defines a map $\Psi:\beta\mapsto \hat{\beta}$.
To conclude, we just need to see that the total weight of the preimage of a trajectory $\hat{\beta}$ under this map is small, compared to the weight of $\hat{\beta}$.

Precisely, let $\hat{\beta}$ be a trajectory for the random walk constructed as above and let $\beta$ be such that $\Psi(\beta)=\hat{\beta}$.
Then, $\beta$ is the concatenation of the sub-trajectory $\hat{\beta}_1$ of $\hat{\beta}$ from $g$ to some $g'$, a trajectory that avoids $B_{C1S}(p)$ from $g'$ to some $h'$ and the sub-trajectory $\hat{\beta}_2$ of $\hat{\beta}$ from $h'$ to $h$.
Moreover, $\hat{g}'$ and $\hat{h}'$ are completely determined by $\hat{\beta}$.
Indeed, since $C_0S\geq 2\Lambda$, these points coincide with the last points on $\hat{\beta}$ around $p$ that intersect $\alpha$ before the trajectory leaves the $2\Lambda$-neighborhood of $\alpha$.
Also, recall that $d(g',\hat{g}')\leq \Lambda C_0S$ and $d(h',\hat{h}')\leq \Lambda C_0S$.
Finally, once $g'$ and $h'$ are fixed, $\hat{\beta}_1$ and $\hat{\beta}_2$ are completely determined.
Letting $\mathcal{T}_{\hat{\beta}}$ be the collection preimages of $\hat{\beta}$, this proves that
$$W(\mathcal{T}_{\hat{\beta}})\leq \sum_{g'\in B_{\Lambda C_0S}(\hat{g}')}\sum_{h'\in C_{\Lambda C_0S}(\hat{h}')}W(\hat{\beta}_1)G(g',h';B_{C_1S}(p)^c)W(\hat{\beta}_2).$$
According to~(\ref{lengthgamma2}), the length of the subtrajectory $\gamma$ of $\beta$ from $g'$ to $h'$ is at least $Nd(g',h')$,
so that~(\ref{weightofalongpath}) shows that
$$G(g',h';B_{C_1S}(p)^c)\leq K\theta^{Nd(g',h')}.$$
On the other hand, the sub-trajectory $\hat{\gamma}$ of $\hat{\beta}$ from $g'$ to $h'$ has length at most $\Lambda d(g',h')+4\Lambda C_0S$, so that
$$W(\hat{\beta})\geq  W(\hat{\beta}_1)W(\hat{\beta}_2)q^{\Lambda d(g',h')+4\Lambda C_0S}$$
and so
$$W(\mathcal{T}_{\hat{\beta}})\leq\sum_{g'\in B_{\Lambda C_0S}(\hat{g}')}\sum_{h'\in B_{\Lambda C_0S}(\hat{h}')} W(\hat{\beta})K\theta^{Nd(g',h')} q^{-\Lambda d(g',h')-4\Lambda C_0S}.$$
According to the condition defining $N$,
$$\theta^{Nd(g',h')} q^{-\Lambda d(g',h')-4\Lambda C_0S}\leq 2^{-d(g',h')}q^{-4\Lambda C_0S}$$
and since $d(g',h')\geq 2C_1S-4C_0S$, Condition~(c) above shows that
$$2^{-d(g',h')}\leq q^{4\Lambda C_0S}\mathrm{e}^{-2v\Lambda C_0S}\frac{\epsilon}{K}.$$
Since the balls $B_{\Lambda C_0S}(\hat{g}')$ and $B_{\Lambda C_0S}(\hat{h}')$ have cardinality at most $\mathrm{e}^{v\Lambda C_0S}$, we get
$$W(\mathcal{T}_{\hat{\beta}})\leq W(\hat{\beta})\mathrm{e}^{2v\Lambda C_0S}Kq^{-4\Lambda C_0S}q^{4\Lambda C_0S}\mathrm{e}^{-2v\Lambda C_0S}\frac{\epsilon}{K}=\epsilon W(\hat{\beta}).$$
This concludes the proof.
\end{proof}

\begin{corollary}\label{corollaryAnconalinearprogress}
Let $\Gamma$ be a finitely generated acting acylindrically on a hyperbolic space $X$.
For every $T,S\geq 1$ there exists $C\geq 1$ such that the following holds.
Let $g,h\in \Gamma$ and let $\alpha$ be a geodesic connecting $g$ to $h$.
Let $p$ be a $(T,S)$-linear progress point on $\alpha$.
Then,
$$\frac{1}{C}G(g,p)G(p,h)\leq G(g,h)\leq C G(g,p)G(p,h).$$
\end{corollary}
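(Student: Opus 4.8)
The plan is to deduce the corollary from Proposition~\ref{epsilonAnconaAcylindricallyhyperbolic} together with the supermultiplicativity of the Green function. The left-hand inequality is immediate and uses neither the geometry of $X$ nor the linear progress hypothesis: taking $g_1=g$, $g_2=p$, $g_3=h$ in~(\ref{triangleGreen'}) gives $G(g,p)G(p,h)\leq CG(g,h)$ for a uniform constant, that is $\frac{1}{C}G(g,p)G(p,h)\leq G(g,h)$. Thus the whole content lies in the reverse inequality $G(g,h)\leq CG(g,p)G(p,h)$, and this is where the linear progress assumption enters, through the proposition.

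For the upper bound I first apply Proposition~\ref{epsilonAnconaAcylindricallyhyperbolic} with the given $T,S$ and with $\epsilon=\frac{1}{2}$, obtaining a radius $C_1S$ such that, writing $B=B_{C_1S}(p)$, the trajectories from $g$ to $h$ avoiding $B$ contribute at most half of the Green function: $G(g,h;B^c)\leq \frac{1}{2}G(g,h)$. Consequently the trajectories that \emph{do} meet $B$ account for at least $\frac{1}{2}G(g,h)$. I then decompose these remaining trajectories according to their first entrance point $q\in B$: each trajectory from $g$ to $h$ meeting $B$ splits uniquely as an initial segment from $g$ to $q$ that stays outside $B$ until its terminal point $q$, followed by an arbitrary segment from $q$ to $h$, and the weight is multiplicative under this concatenation. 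Since the total weight of initial segments of the first type is bounded above by $G(g,q)$, this yields
$$\tfrac{1}{2}G(g,h)\leq \sum_{q\in B}G(g,q)G(q,h).$$

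It remains to compare $G(g,q)$ and $G(q,h)$ with $G(g,p)$ and $G(p,h)$ for $q\in B$, which is a Harnack-type estimate. Since $d(q,p)\leq C_1S$ and the support of $\mu$ generates $\Gamma$ as a semi-group, there is a path from $q$ to $p$, and one from $p$ to $q$, of length at most $\Lambda C_1S$; hence $F(q,p)$ and $F(p,q)$ are bounded below by a constant $c_0=c_0(C_1,S)>0$. Applying~(\ref{triangleGreen}) in the two forms $F(g,q)F(q,p)\leq F(g,p)$ and $F(p,q)F(q,h)\leq F(p,h)$, and recalling $G=G(e,e)\,F$, gives $G(g,q)\leq c_0^{-1}G(g,p)$ and $G(q,h)\leq c_0^{-1}G(p,h)$. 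As the ball $B$ has cardinality at most $\mathrm{e}^{vC_1S}$, summing over $q\in B$ produces $\frac{1}{2}G(g,h)\leq c_0^{-2}\,\mathrm{e}^{vC_1S}\,G(g,p)G(p,h)$, which is the desired inequality with $C=2c_0^{-2}\mathrm{e}^{vC_1S}$; all constants depend only on $T$, $S$ and the random walk.

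The routine parts are the first-passage decomposition and the Harnack estimates; the only point that requires care is orienting the two applications of~(\ref{triangleGreen}) so that the bounded-distance factor always lands in the denominator, and checking that the possibly asymmetric Green metric causes no trouble—both $F(q,p)$ and $F(p,q)$ must be bounded below, which holds precisely because a bounded-length path exists in each direction. The genuine work, namely controlling the trajectories that avoid the ball around $p$, has already been carried out in Proposition~\ref{epsilonAnconaAcylindricallyhyperbolic}, so no serious obstacle remains here.
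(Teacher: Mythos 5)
Your proposal is correct and follows essentially the same route as the paper: apply Proposition~\ref{epsilonAnconaAcylindricallyhyperbolic} with $\epsilon=\frac12$, decompose trajectories meeting the ball by their first visit, and use Harnack-type bounds $G(g,q)\lesssim G(g,p)$, $G(q,h)\lesssim G(p,h)$ together with the cardinality of the ball, while the reverse inequality comes directly from~(\ref{triangleGreen'}). The only difference is that you spell out the Harnack step (via bounded-length paths in both directions, which is valid since the support of $\mu$ generates $\Gamma$ as a semi-group) where the paper simply asserts it.
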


\begin{proof}
Let $T,S\geq 1$ and let $p$ be a $(T,S)$-linear progress point on a geodesic $\alpha$ joining $g$ to $h$.
Then, Proposition~\ref{epsilonAnconaAcylindricallyhyperbolic} shows there exists $R\geq 0$ such that
$$G(g,h;B_R(p)^c)\leq \frac{1}{2} G(g,h).$$
Decomposing a trajectory for the random walk from $g$ to $h$ according to its first visit to $B_R(p)$, we get
$$G(g,h)=G(g,h;B_R(p)^c)+\sum_{q\in B_R(p)}G(g,q;B_R(p))G(q,h)$$
so that
$$G(g,h)\leq 2 \sum_{q\in B_R(p)}G(g,q)G(q,h).$$
Since $q$ is within $R$ of $p$, there exists a constant $C$ depending only on $R$ such that
$G(g,q)\leq C G(g,p)$ and $G(q,h)\leq C G(p,h)$.
We thus get
$$G(g,h)\leq 2 C \mathrm{Card}(B_R(e))G(g,p)G(p,h).$$
This proves one of the two inequalities.
According to~(\ref{triangleGreen'}), the other inequality is always satisfied, whether $p$ is linear progress point on $\alpha$ or not.
This concludes the proof.
\end{proof}

\subsection{Morse-Ancona inequalities in HHGs}

We will use in this section the results of Abbott, Behrstock and Durham \cite{AbbottBehrstockDurham}.
For any non-elementary hierarchically hyperbolic group $\Gamma$, they construct a special hierarchical structure, which has nice properties.
In particular, the action of $\Gamma$ on the underlying space $C\mathbf{S}$ is a largest acylindrical action, where $\mathbf{S}$ is the maximal domain in $\mathfrak{S}$.
When we consider a hierarchically hyperbolic group, we will always implicitly consider this hierarchical structure and we will always implicitly consider this acylindrical action on $C\mathbf{S}$.

\begin{proposition}\label{propMorselinearprogresshierarchicallyhyperbolic}
Let $\Gamma$ be a non-elementary hierarchically hyperbolic group.
For every Morse gauge $N$, there exists $S,T$ such that the following holds.
Let $\alpha$ be an $N$-Morse geodesic.
Then any point on $\alpha$ is a $(T,S)$-linear progress point.
\end{proposition}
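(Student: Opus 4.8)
The plan is to exploit the fact that, in the Abbott--Behrstock--Durham hierarchical structure, an $N$-Morse geodesic makes progress \emph{only} in the maximal domain $C\mathbf{S}$, and to combine this with the distance formula. First I would invoke the hierarchical characterization of Morse-ness: there is a constant $B = B(N)$, depending only on the Morse gauge $N$ and the hierarchical structure, such that for every $N$-Morse geodesic $\alpha$ and every proper domain $W \neq \mathbf{S}$, the diameter of $\pi_W(\alpha)$ in $CW$ is at most $B$. In particular, for any two points $p_1, p_2 \in \alpha$ and any $W \neq \mathbf{S}$ we have $d_W(\pi_W(p_1), \pi_W(p_2)) \leq B$.

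Next I would apply the distance formula for hierarchically hyperbolic groups. Writing $[t]_K = t$ if $t \geq K$ and $[t]_K = 0$ otherwise, there is a threshold $K_0$ such that for every $K \geq K_0$ there is a constant $A = A(K)$ with
$$d(x, y) \leq A \sum_{W \in \mathfrak{S}} [d_W(\pi_W(x), \pi_W(y))]_K + A$$
for all $x, y \in \Gamma$. Choosing $K = \max\{K_0, B+1\}$, the bound from the first step forces every term with $W \neq \mathbf{S}$ to vanish, so for $p_1, p_2 \in \alpha$ only the maximal domain survives:
$$d(p_1, p_2) \leq A\, d_{\mathbf{S}}(\pi_{\mathbf{S}}(p_1), \pi_{\mathbf{S}}(p_2)) + A.$$
Since $\pi_{\mathbf{S}}$ is coarsely the orbit map for the action on $C\mathbf{S} = X$, after adjusting $A$ this reads $d(p_1, p_2) \leq A\, d_X(p_1, p_2) + A$.

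It remains to absorb the additive constant using the hypotheses on $S$. Suppose $p_1, p, p_2$ are aligned in this order on $\alpha$ with $d(p, p_1) \geq S$ and $d(p, p_2) \geq S$. As they lie on a word geodesic, $d(p_1, p_2) = d(p_1, p) + d(p, p_2) \geq 2S$, and the previous estimate gives
$$d_X(p_1, p_2) \geq \frac{d(p_1, p_2) - A}{A} \geq \frac{d(p_1, p_2)}{A}\Big(1 - \frac{A}{2S}\Big).$$
Taking $S = A$ yields $d_X(p_1, p_2) \geq \frac{1}{2A}\, d(p_1, p_2)$, that is $d(p_1, p_2) \leq 2A\, d_X(p_1, p_2)$. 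Thus every point of $\alpha$ is a $(T, S)$-linear progress point with $T = 2A$ and $S = A$, and since both constants depend only on $N$, the proposition follows.

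The hard part is the first step: showing that the projection bound $B$ for an $N$-Morse geodesic can be taken to depend only on the gauge $N$. This is the genuine geometric content of the statement, identifying Morse directions with those directions that are entirely captured by the largest acylindrical action on $C\mathbf{S}$. It is here that the specific properties of the Abbott--Behrstock--Durham structure, together with the bounded geodesic image property controlling projections to proper domains, must be used carefully to ensure uniformity in $N$ across all $N$-Morse geodesics simultaneously.
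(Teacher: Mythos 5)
Your proposal is correct and follows essentially the same route as the paper: both invoke the Abbott--Behrstock--Durham characterization of $N$-Morse geodesics as those with $D(N)$-bounded projections to proper domains, apply the distance formula with a threshold above $D(N)$ so that only the maximal domain $\mathbf{S}$ survives, and absorb the additive constant by taking $S$ equal to it (yielding $T=2K$). The one step you flag as the ``hard part''---uniformity of the projection bound in $N$---is exactly what the paper also outsources to \cite[Theorem~E]{AbbottBehrstockDurham} and the discussion in Section~6 there, so you have correctly identified both the key input and its source.
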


To prove this proposition, we will use the following result.
Recall the following definition from \cite{AbbottBehrstockDurham}.
\begin{definition}
Let $(X,\mathfrak{S})$ be a hierarchically hyperbolic space with maximal domain $\mathbf{S}\in \mathfrak{S}$.
Let $Y\subset X$ and $D>0$.
We say that $Y$ has $D$-bounded projection if for every $U\in \mathfrak{S}\setminus \{\mathbf{S}\}$, we have
$\mathrm{diam}_U(Y)<D$.
\end{definition}

\begin{proposition}\label{ABDTheoremE}\cite[Theorem~E]{AbbottBehrstockDurham}
Let $\Gamma$ be a non-elementary hierarchically hyperbolic group.
A geodesic $\alpha$ in $\Gamma$ is $N$-Morse if and only if it has $D$-bounded projections, where $N$ and $D$ determine each other.
\end{proposition}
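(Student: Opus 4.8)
The plan is to prove the two implications separately, in both cases using the standard machinery of hierarchically hyperbolic spaces: the Distance Formula, the Bounded Geodesic Image (BGI) theorem, the large links axiom, the standard product regions $P_U$, and the fact that $C\mathbf{S}$ is $\delta$-hyperbolic with $\pi_{\mathbf{S}}$ coarsely Lipschitz. Throughout I work with the ABD-modified hierarchical structure, for which $C\mathbf{S}$ carries the largest acylindrical action, so that $C\mathbf{S}$ is exactly the ``hyperbolic direction'' and the proper product regions are the ``flat-like'' ones.

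For the implication $N$-Morse $\Rightarrow$ $D$-bounded projections I argue by contraposition. Suppose $\mathrm{diam}_U(\alpha)\geq D$ for some $U\in\mathfrak{S}\setminus\{\mathbf{S}\}$. Then a subsegment $\alpha'$ of $\alpha$ has $U$-projection of diameter $\geq D-O(1)$, and by the realization theorem $\alpha'$ lies uniformly close to the standard product region $P_U$, which is quasi-isometric to a product $F_U\times E_U$ (with $F_U$ coarsely $CU$ and $E_U$ the orthogonal complement). For a non-maximal $U$ in the ABD structure the factor $E_U$ is unbounded, so $P_U$ is not Morse: it admits quasi-geodesics between points of $\alpha'$ that go ``around a rectangle'' and hence diverge from $\alpha$ by an amount comparable to $\mathrm{diam}_U(\alpha')\geq D-O(1)$. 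Letting $D\to\infty$ violates any fixed Morse gauge, and tracking the rectangle construction makes $D$ an explicit increasing function of $N$.

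For the converse, assume $\mathrm{diam}_U(\alpha)<D$ for every $U\neq\mathbf{S}$. First I show $\pi_{\mathbf{S}}|_\alpha$ is a quasi-isometric embedding (with constants depending on $D$). By the Distance Formula,
$$d_\Gamma(x,y)\asymp\sum_{W\in\mathfrak{S}}\big[d_W(\pi_W(x),\pi_W(y))\big]_K,\qquad x,y\in\alpha.$$
The term $W=\mathbf{S}$ equals $d_{\mathbf{S}}(\pi_{\mathbf{S}}(x),\pi_{\mathbf{S}}(y))$; each non-maximal term is $<D$, and by BGI together with the large links axiom only boundedly many non-maximal domains per unit length of the $C\mathbf{S}$-geodesic $[\pi_{\mathbf{S}}(x),\pi_{\mathbf{S}}(y)]$ contribute above threshold. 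Hence the non-maximal contributions total $\lesssim_D d_{\mathbf{S}}(\pi_{\mathbf{S}}(x),\pi_{\mathbf{S}}(y))$, and with the coarse-Lipschitz lower bound this gives the embedding; in particular $\pi_{\mathbf{S}}(\alpha)$ is a quasi-geodesic in $C\mathbf{S}$. Now, given a $(\lambda,c)$-quasi-geodesic $\beta$ with endpoints $x,y\in\alpha$, I first pass to a hierarchy path $\eta$ from $x$ to $y$ and bound $d_\Gamma(\cdot,\alpha)$ along $\eta$ coordinate by coordinate: $\pi_{\mathbf{S}}(\eta)$ is a genuine quasi-geodesic with the same endpoints as $\pi_{\mathbf{S}}(\alpha)$, so hyperbolic Morse-ness makes them fellow-travel; and for a non-maximal $V$, a large $d_V(\pi_V(z),\pi_V(\alpha))$ for $z\in\eta$ would, via BGI and the Behrstock inequality, place the support $\rho_V^{\mathbf{S}}$ near this common quasi-geodesic and force $\alpha$ itself to traverse $V$, contradicting the $D$-bound. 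Summing the bounded contributions through the Distance Formula bounds $d_\Gamma(z,\alpha)$, and one transfers the bound from $\eta$ to $\beta$.

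The main obstacle is this last step: controlling the projections of a point $z$ to \emph{every} domain simultaneously, and in particular handling an arbitrary quasi-geodesic $\beta$ rather than a hierarchy path (for which the coordinate bounds are cleanest). Making the BGI/Behrstock comparison uniform over all non-maximal $V$, transferring the deviation bound from the hierarchy path $\eta$ to the general quasi-geodesic $\beta$, and extracting from this the precise mutual dependence asserted by ``$N$ and $D$ determine each other'', is where the real work lies; the remaining steps are essentially bookkeeping with the Distance Formula.
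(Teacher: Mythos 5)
First, note what the paper actually does with this statement: it does not reprove it, but cites \cite[Theorem~E]{AbbottBehrstockDurham} for the direction ``$D$-bounded projections $\Rightarrow$ $N(D)$-Morse'' and points to the discussion in Section~6 of that paper (``contracting implies stability'') for the direction ``$N$-Morse $\Rightarrow$ $D(N)$-bounded projections''. Your first implication (Morse $\Rightarrow$ bounded projections) is sketched along essentially the right lines: you correctly identify that it is exactly the unbounded-products property of the ABD structure (unbounded $E_U$ for all non-maximal $U$) that makes the rectangle-detour construction possible, and this is precisely why the general acylindrical setting fails (compare the free group example in the paper, where a Morse geodesic has unbounded projection to a cone-off coset). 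One quibble: in your quasi-isometric-embedding step the BGI/large-links counting is unnecessary --- since every non-maximal term is $<D$, simply taking the distance-formula threshold $s\geq D$ kills all non-maximal terms at once, which is how the paper itself argues in the proof of Proposition~\ref{propMorselinearprogresshierarchicallyhyperbolic}.

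The genuine gap is in your second implication, and it is the step you yourself flag: ``one transfers the bound from $\eta$ to $\beta$.'' There is no such transfer available, and attempting it is circular. The Morse property quantifies over \emph{all} $(\lambda,c)$-quasi-geodesics, while hierarchy paths form a single uniform class of quasi-geodesics; controlling that class says nothing about an arbitrary $\beta$. Indeed, an arbitrary quasi-geodesic with endpoints $x,y$ need not fellow-travel any hierarchy path from $x$ to $y$ --- already in $\Z^2$ a uniform quasi-geodesic between $(0,0)$ and $(n,0)$ may stray linearly far from the coarse hull of its endpoints --- and the assertion that it does fellow-travel is essentially the Morse property you are trying to establish. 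A secondary problem is your use of BGI: from $d_V(\pi_V(z),\pi_V(\alpha))$ large you can conclude (via BGI) that certain $C\mathbf{S}$-geodesics pass near $\rho_V^{\mathbf{S}}$, but the contrapositive of BGI does \emph{not} force $\alpha$ to have large $V$-projection merely because $\pi_{\mathbf{S}}(\alpha)$ passes near $\rho_V^{\mathbf{S}}$, so the claimed contradiction with the $D$-bound does not follow as stated. The correct repair --- and the route the cited literature takes, as the paper's remark ``contracting implies stability'' indicates --- is to bypass quasi-geodesics entirely at this stage: use the bounded projections (via BGI and the Behrstock inequality) to show that the coarse closest-point projection onto $\alpha$ is \emph{contracting}, with constants depending only on $D$, and then invoke the general metric-space lemma that contracting subsets are Morse, whose proof subdivides an arbitrary $(\lambda,c)$-quasi-geodesic and applies the contraction estimate directly, yielding the gauge $N$ as an explicit function of $D$ and the mutual dependence asserted in the statement.
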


Actually, Abbott, Behrstock and Durham prove in \cite[Theorem~E]{AbbottBehrstockDurham} that for every $D$ there exists an $N$ such that the conclusion of this proposition holds, so they only prove there that $D$ determines $N$.
However, the fact that $N$ determines $D$ is contained in the discussion in \cite[Section~6]{AbbottBehrstockDurham}.
Roughly speaking, it is a consequence of the fact that contracting implies stability.
Also, projections of infinite Morse geodesics on product regions is well defined, according to \cite[Lemma~6.5]{AbbottBehrstockDurham} and the same result holds for infinite Morse geodesics.

We can now prove Proposition~\ref{propMorselinearprogresshierarchicallyhyperbolic}.

\begin{proof}
Let $N$ be a Morse gauge and let $\alpha$ be an $N$-Morse geodesic.
Then, $\alpha$ has $D$-bounded projections for some $D$ that only depends on $N$.
Let $p$ be any point on $\alpha$ and let $q_1,q_2$ be two points on $\alpha$ such that $q_1,p,q_2$ are aligned in this order.
Let $s\geq D$.
Recall that for two real numbers $t,s$ $\{\{t\}\}_s=0$ if $t\leq s$ and $\{\{t\}\}_s=t$ otherwise.
The distance formula (see \cite{BehrstockHagenSisto2}) shows that there exists $K,C$ only depending on $D$ (thus only depending on $N$) such that
$$d(q_1,q_2)\leq K \sum_{U\in \mathfrak{S}}\left \{\left \{d_U(q_1,q_2)\right \}\right \}_s +C.$$
Since $s\geq D$ and $\alpha$ has $D$-bounded projections,
$$d(q_1,q_2)\leq K d_{\mathbf{S}}(q_1,q_2) +C.$$
Choose $S=C$ and $T=2K$.
Assuming that $d(q_i,p)\geq S$, we have
$C\leq \frac{1}{2}d(q_1,q_2)$, so that
$$d(q_1,q_2)\leq Td_{\mathbf{S}}(q_1,q_2),$$
which concludes the proof.
\end{proof}

We now prove the same result for relatively hyperbolic groups.
We first prove the following.
Recall that a non-elementary relatively hyperbolic acylindrically acts on the graph obtained coning-off the parabolic subgroups, see \cite[Proposition~5.2]{Osinacylindrical}.

\begin{lemma}\label{relativelyhyperbolicMorseboundedprojections}
Let $\Gamma$ be a relatively hyperbolic group whose parabolic subgroups have empty Morse boundary.
A geodesic $\alpha$ in $\Gamma$ is $N$-Morse if and only if it has $D$-bounded projections on parabolic subgroups, where $D$ and $N$ determine each other.
\end{lemma}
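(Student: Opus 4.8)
The plan is to mimic the strategy just used for hierarchically hyperbolic groups, where the key input was the Abbott--Behrstock--Durham characterization (Proposition~\ref{ABDTheoremE}) of Morse geodesics via bounded projections, combined with the distance formula. For relatively hyperbolic groups the analogous tools exist: a relatively hyperbolic group admits a distance formula (a Masur--Minsky-type formula in the relatively hyperbolic setting, due to Sisto and others) in which the relevant "domains" are the cone-off $\hat{\Gamma}$ together with the peripheral cosets $gP$, and the projections $\pi_{gP}$ onto the parabolic cosets play the role of the subsurface projections. So I would first recall this distance formula and the machinery of relative projections.

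First I would prove the easy direction: if $\alpha$ has $D$-bounded projections onto all parabolic cosets, then $\alpha$ is $N$-Morse with $N$ depending only on $D$. The idea is that a $(\lambda,c)$-quasigeodesic $\beta$ with the same endpoints as a segment of $\alpha$ projects to a quasigeodesic in the (genuinely hyperbolic) cone-off $\hat{\Gamma}$, which therefore fellow-travels the image of $\alpha$ there; the only way $\beta$ could stray far from $\alpha$ in $\Gamma$ is by making large excursions into peripheral cosets, but bounded projections of $\alpha$ force these excursions to be controlled via the distance formula, giving a Morse bound $N=N(D,\lambda,c)$. Conversely, if $\alpha$ is $N$-Morse but had arbitrarily large projection to some coset $gP$, then one could replace the corresponding segment of $\alpha$ by a uniform quasigeodesic running deep into $gP$; the hypothesis that $P$ has empty Morse boundary is exactly what lets me produce, inside $gP$, a detour that is a uniform quasigeodesic yet leaves every fixed neighborhood of $\alpha$, contradicting the Morse property. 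This is where the empty-Morse-boundary hypothesis enters: it guarantees the peripheral subgroups contain no stable (Morse) directions, so deep penetration into a coset cannot itself be forced to stay Morse, and hence a Morse geodesic cannot penetrate deeply.

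The main obstacle I expect is making the converse direction quantitative, i.e.\ extracting from ``empty Morse boundary of $P$'' a uniform statement: I need that for every threshold there is a single quasigeodesic constant $(\lambda,c)$ (independent of the coset $gP$ and of $\alpha$) witnessing a detour of controlled quality but unbounded deviation. Empty Morse boundary is a priori only a statement about each $P$ separately and about rays rather than segments, so I would have to upgrade it to a uniform, segment-level statement, using that there are finitely many conjugacy classes of peripheral subgroups (so finitely many ``types'' of cosets) and a compactness/limiting argument to pass from unbounded projections to a genuine Morse ray in some $P$, contradicting emptiness. I would also need to check carefully that $N$ and $D$ determine each other with constants independent of the particular geodesic, as asserted in the statement; this uniformity, rather than any single estimate, is the delicate point.
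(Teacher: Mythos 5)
Your plan is essentially sound and, on the direction that actually uses the empty-Morse-boundary hypothesis, it converges on the paper's argument: the paper shows that if $N$-Morse geodesics could travel arbitrarily long in (a bounded neighborhood of) a peripheral coset, then, since parabolic subgroups quasi-isometrically embed, one would get arbitrarily long $N'$-Morse segments in $P$, and an Arzel\`a--Ascoli limiting argument produces an $N'$-Morse ray in $P$, contradicting emptiness of $\partial_M P$. This is exactly the ``uniform, segment-level upgrade'' you flag as the delicate point, so you have correctly located where the work is; note only that your initial framing (``empty Morse boundary lets me produce a detour quasigeodesic'') is logically backwards --- the hypothesis is used via the contrapositive compactness argument you describe afterwards, not by directly manufacturing a bad quasigeodesic inside $gP$. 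On the other direction (bounded projections $\Rightarrow$ Morse), your route through the coned-off graph and a distance formula differs from the paper's: the paper instead quotes Drutu--Sapir's theorem that a $(\lambda,c)$-quasigeodesic stays in a uniform neighborhood of the saturation of the geodesic, and then uses Sisto's lemma that entry and exit points of $\alpha$ and $\beta$ in a peripheral neighborhood lie within bounded distance of the projections of the endpoints, so that $D$-bounded projections pin down the excursion. Your version would need the (true but nontrivial) fact that quasigeodesics of $\Gamma$ map to unparametrized quasigeodesics of $\hat\Gamma$ plus a lifting argument controlling peripheral excursions; the saturation theorem packages all of that in one citation, which is why the paper's write-up is shorter. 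Both routes work; neither changes the statement's content.
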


This lemma is a consequence of results in \cite[Section~5]{Tran}, although it is not stated like that there, so we give the proof for convenience.

\begin{proof}
Since the parabolic subgroups have empty Morse boundary, for fixed $N$, there cannot be arbitrarily large $N$-Morse geodesics starting at the same point.
Otherwise, one could extract a sub-sequence of these $N$-Morse geodesics, using the Arzel\`a-Ascoli theorem to construct a geodesic ray, like in the proof of \cite[Corollary~1.4]{Cordes} and according to \cite[Lemma~2.10]{Cordes}, this resulting geodesic ray would be $N$-Morse.
Since parabolic subgroups quasi-isometrically embed in $\Gamma$, \cite[Lemma~2.9]{Cordes} shows that a $N$-Morse geodesic in the group $\Gamma$ stays within a bounded distance of a $N'$-Morse geodesic in the parabolic group.
Thus, there cannot be $N$-Morse geodesics in $\Gamma$ starting at a fixed base-point and travelling arbitrarily long in parabolic subgroups.

Conversely, let $\alpha$ be a geodesic with $D$-bounded projections on parabolic subgroups.
Let $p_1,p_2$ be two points on $\alpha$ and let $\beta$ be a $(\lambda,c)$-quasi-geodesic from $p_1$ to $p_2$.
We want to prove that any point of $\beta$ is within $N$ of a point of $\alpha$, where $N$ only depends of $\lambda$ and $c$.
Recall that the $M$-saturation of $\alpha$ is the union of $\alpha$ and all left cosets of parabolic subgroups whose $M$-neighborhood intersects $\alpha$.
First, according to \cite[Theorem~1.12~(1)]{DrutuSapir}, any point $x$ on $\beta$ is within $M_1$ of a point in the $M_0$-saturation of $\alpha$, where $M_0$ and $M_1$ only depends on $\lambda$ and $c$ (actually, $M_0$ does not even depend on those parameters, but ony on the group).
We just need to deal with the case where $x$ is within $M_1$ of a point $y$ in some left coset $gP$, where $P$ is a parabolic subgroup and such that $\alpha$ enters the $M_0$-neighborhood of $gP$.
Let $M_2=\max (M_0,M_1)$ so that both $\alpha$ and $\beta$ enter the $M_2$-neighborhood of $gP$ that we denote by $\mathcal{N}_{M_2}(gP)$.
Let $\alpha_1$ and $\alpha_2$, respectively $\beta_1$ and $\beta_2$ be the first and last points in $\mathcal{N}_{M_2}(gP)$ for $\alpha$, respectively $\beta$.
Then, according to \cite[Lemma~1.13~(a)]{Sistoprojections}, $\alpha_1$ and $\beta_1$ are within a bounded distance, say $\Lambda$, of the projection $q_1$ of $p_1$ on $gP$.
Similarly, $\alpha_2$ and $\beta_2$ are within $\Lambda$ of the projection $q_2$ of $p_2$ on $gP$.
Again, $\Lambda$ only depends on $\lambda$ and $c$.
Since $\alpha$ has $D$-bounded projections, $d(q_1,q_2)\leq D$ and so the distance between $\beta_1$ and $\beta_2$ is bounded.
In particular, since $\beta$ is a $(\lambda,c)$-quasi-geodesic, the distance between $x$ and $\beta_1$ is bounded.
Finally, $d(\beta_1,\alpha_1)\leq 2\Lambda$, so the distance between $x$ and $\alpha_1$ is bounded and the bound only depends on $\lambda$ and $c$.
This concludes the proof.
\end{proof}

We deduce the following from Lemma~\ref{relativelyhyperbolicMorseboundedprojections} and from the distance formula given by \cite[Theorem~0.1]{Sistoprojections}, exactly like we deduced Proposition~\ref{propMorselinearprogresshierarchicallyhyperbolic} from Proposition~\ref{ABDTheoremE} and the distance formula in hierarchically hyperbolic groups.

\begin{proposition}\label{propMorselinearprogressrelativelyhyperbolic}
Let $\Gamma$ be a non-elementary relatively hyperbolic groups, whose parabolic subgroups have empty Morse boundaries.
For every Morse gauge $N$, there exists $S,T$ such that the following holds.
Let $\alpha$ be an $N$-Morse geodesic.
Then any point on $\alpha$ is a $(T,S)$-linear progress point.
\end{proposition}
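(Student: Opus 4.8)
The plan is to mirror the proof of Proposition~\ref{propMorselinearprogresshierarchicallyhyperbolic} line for line, simply swapping in the relatively hyperbolic ingredients: Lemma~\ref{relativelyhyperbolicMorseboundedprojections} plays the role of Proposition~\ref{ABDTheoremE}, and the relatively hyperbolic distance formula of \cite[Theorem~0.1]{Sistoprojections} plays the role of the hierarchically hyperbolic distance formula of \cite{BehrstockHagenSisto2}. Recall that here the hyperbolic space $X$ on which $\Gamma$ acts acylindrically is the coned-off Cayley graph, so that $d_X$ is the coned-off distance; this is the term in the distance formula that will survive, in the same way that the maximal-domain term $d_{\mathbf{S}}$ survived in the HHG argument.

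First I would fix a Morse gauge $N$ and an $N$-Morse geodesic $\alpha$. By Lemma~\ref{relativelyhyperbolicMorseboundedprojections}, $\alpha$ has $D$-bounded projections on the parabolic subgroups for some $D$ depending only on $N$. Next I would invoke \cite[Theorem~0.1]{Sistoprojections}: there is a threshold $s_0$ such that for every $s\geq s_0$ there are constants $K,C$ (depending on $s$) with
$$d(x,y)\asymp d_X(x,y)+\sum_{gP}\{\{d_{gP}(x,y)\}\}_s,$$
the sum ranging over left cosets $gP$ of parabolic subgroups and $d_{gP}$ denoting the distance between nearest-point projections onto $gP$. Now take any $p\in\alpha$ and any $q_1,q_2\in\alpha$ with $q_1,p,q_2$ aligned in this order, and set the threshold $s=\max(s_0,D)$. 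Since $\alpha$ has $D$-bounded projections, $d_{gP}(q_1,q_2)<D\leq s$ for every coset $gP$, so every term $\{\{d_{gP}(q_1,q_2)\}\}_s$ vanishes and the formula collapses to
$$d(q_1,q_2)\leq K\, d_X(q_1,q_2)+C,$$
with $K,C$ depending only on $N$. Finally I would set $S=C$ and $T=2K$: if $d(q_1,p)\geq S$ and $d(q_2,p)\geq S$, then since $q_1,p,q_2$ lie on a geodesic in this order we have $d(q_1,q_2)=d(q_1,p)+d(p,q_2)\geq 2S=2C$, hence $C\leq \tfrac12 d(q_1,q_2)$, and combining with the previous display gives $\tfrac12 d(q_1,q_2)\leq K\,d_X(q_1,q_2)$, i.e. $d(q_1,q_2)\leq T\,d_X(q_1,q_2)$. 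Thus $p$ is a $(T,S)$-linear progress point, as required.

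The only genuinely delicate point, and the step I would be most careful about, is matching the precise form of \cite[Theorem~0.1]{Sistoprojections} to the objects at hand: one must confirm that the non-projection term in that distance formula is exactly the coned-off distance $d_X$ on the space $X$ on which $\Gamma$ acts acylindrically, and that the $D$-bounded-projection hypothesis supplied by Lemma~\ref{relativelyhyperbolicMorseboundedprojections} is phrased in terms of the \emph{same} nearest-point projections $d_{gP}$ that appear in the formula, so that raising the threshold above $D$ really does annihilate all the parabolic terms simultaneously. Once these identifications are in place the argument is formally identical to the hierarchically hyperbolic case, and the remaining constant bookkeeping is routine.
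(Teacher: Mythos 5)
Your proposal is correct and is exactly the argument the paper intends: the paper itself states that Proposition~\ref{propMorselinearprogressrelativelyhyperbolic} is deduced from Lemma~\ref{relativelyhyperbolicMorseboundedprojections} and the distance formula of \cite[Theorem~0.1]{Sistoprojections} in precisely the same way that Proposition~\ref{propMorselinearprogresshierarchicallyhyperbolic} was deduced from Proposition~\ref{ABDTheoremE} and the hierarchically hyperbolic distance formula. Your careful matching of the coned-off distance with the non-projection term of the formula, and the choice $S=C$, $T=2K$, reproduce the paper's reasoning faithfully.
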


Let us briefly explain why we needed to work with hierarchically hyperbolic and relatively hyperbolic groups and not any acylindrically hyperbolic groups to get these results.
This will also explain why we needed to  use the modified hierarchical structure from \cite{AbbottBehrstockDurham}.
Consider the free group $\Gamma$ with two generators $a$ and $b$.
Then, $\Gamma$ is hyperbolic so that every geodesic is $N$-Morse for some fixed Morse gauge $N$.
Also, $\Gamma$ is hyperbolic relative to the subgroup generated by $a$ so it acylindrically acts on the graph obtained by coning-off this particular subgroup.
Choose now a geodesic $\alpha$ in $\Gamma$ travelling arbitrarily long in $\langle a \rangle$ so that $e$ is not a $(T,S)$-linear progress point on $\alpha$, whereas this geodesic is $N$-Morse.
One can make $e$ a $(T,S)$-linear progress point by considering the acylindrical action on the Cayley graph of $\Gamma$, so the apparent contradiction with our result comes from the fact that the first acylindrical action was not a largest one.

We deduce the following from Propositions~\ref{propMorselinearprogresshierarchicallyhyperbolic} and~\ref{propMorselinearprogressrelativelyhyperbolic} and from Proposition~\ref{epsilonAnconaAcylindricallyhyperbolic}.

\begin{proposition}\label{epsilonAnconaMorsegeodesic}
Let $\Gamma$ be a non-elementary hierarchically hyperbolic group or a non-elementary relatively hyperbolic group whose parabolic subgroups have empty Morse boundary.
Let $\mu$ be a probability measure on $\Gamma$ whose finite support generates $\Gamma$ as a semi-group.
Let $N$ be a Morse gauge.
Then, for any $\epsilon>0$, there exists $R>0$ such that the following hold.
Let $\alpha$ be an $N$-Morse geodesic and let $x,y,z$ be three points in this order on $\alpha$.
Then,
$$G(x,z;B_R(y)^c)\leq \epsilon G(x,z).$$
\end{proposition}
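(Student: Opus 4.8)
The plan is to obtain this statement as a formal consequence of the two ingredients cited just before it, namely the linear-progress propositions and the $\epsilon$-Ancona estimate in acylindrically hyperbolic groups. First I would fix the Morse gauge $N$ and invoke the relevant linear-progress result: Proposition~\ref{propMorselinearprogresshierarchicallyhyperbolic} in the hierarchically hyperbolic case and Proposition~\ref{propMorselinearprogressrelativelyhyperbolic} in the relatively hyperbolic case. Each produces constants $S,T\geq 1$, depending only on $N$, with the property that every point lying on any $N$-Morse geodesic is a $(T,S)$-linear progress point. Since the two propositions have identical conclusions, the two cases of the statement are handled uniformly from here on, and I no longer need to distinguish them.

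With $S$ and $T$ now determined by $N$, and given $\epsilon>0$, I would feed $T,S,\epsilon$ into Proposition~\ref{epsilonAnconaAcylindricallyhyperbolic}. This yields a constant $C_1\geq 0$ such that for any geodesic with endpoints $g,h$ and any $(T,S)$-linear progress point $p$ on it, one has $G(g,h;B_{C_1S}(p)^c)\leq \epsilon G(g,h)$. I would then simply set $R=C_1S$. It is worth recording that $R$ depends only on $N$ and $\epsilon$: indeed $S,T$ depend only on $N$, and $C_1$ depends only on $S,T,\epsilon$, so the choice is uniform over all $N$-Morse geodesics rather than tied to a particular one.

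Finally, given an $N$-Morse geodesic $\alpha$ and points $x,y,z$ aligned in this order on $\alpha$, I would pass to the subsegment $[x,z]$ of $\alpha$, which is itself a geodesic from $x$ to $z$. The one observation that needs to be made is that the linear-progress property of $y$ is inherited by this subsegment: the defining condition only quantifies over pairs $p_1,p_2$ lying on the segment under consideration, and since $[x,z]\subseteq\alpha$, any such pair on $[x,z]$ is in particular a pair on $\alpha$, on which $y$ is already a $(T,S)$-linear progress point. Hence $y$ is a $(T,S)$-linear progress point on $[x,z]$, and applying Proposition~\ref{epsilonAnconaAcylindricallyhyperbolic} with $g=x$, $h=z$, $p=y$ gives exactly
$$G(x,z;B_R(y)^c)\leq \epsilon G(x,z),$$
as desired. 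I do not expect a genuine obstacle here: the entire content is the bookkeeping of constants together with the remark that linear-progress points restrict to subsegments, so the proof is essentially a one-line deduction once the three cited results are in place.
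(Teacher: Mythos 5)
Your proposal is correct and is precisely the deduction the paper intends: it states only that Proposition~\ref{epsilonAnconaMorsegeodesic} follows from Propositions~\ref{propMorselinearprogresshierarchicallyhyperbolic}, \ref{propMorselinearprogressrelativelyhyperbolic} and \ref{epsilonAnconaAcylindricallyhyperbolic}, which is exactly the chain of citations you carry out. Your added remark that the $(T,S)$-linear progress property passes to the subsegment $[x,z]$ is a worthwhile detail the paper leaves implicit.
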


Finally, Theorem~\ref{maintheoremMorseAncona} is a consequence of Proposition~\ref{epsilonAnconaMorsegeodesic}, in the same way that Corollary~\ref{corollaryAnconalinearprogress} was a consequence of Proposition~\ref{epsilonAnconaAcylindricallyhyperbolic}.


\section{A map from the Morse boundary to the Martin boundary}\label{Sectionconstructionmap}
We consider a finitely generated group $\Gamma$ ans we assume that $\Gamma$ either is non-elementary hierarchically hyperbolic or non-elementary relatively hyperbolic whose parabolic subgroups have empty Morse boundaries.
We also consider a probability measure $\mu$ whose finite support generates $\Gamma$ as a semi-group.
We now construct a map from the Morse boundary to the Martin boundary.
We follow the strategy of \cite{KaimanovichErgodicity} and use deviation inequalities to prove that whenever $g_n$ is a sequence on a Morse geodesic going to infinity, then $g_n$ converges to some minimal point $\xi$ in the Martin boundary.

\subsection{Construction of the map}

In the following, we fix $x$ in the Morse boundary $\partial_{\mathcal{M}}\Gamma$ and we fix two Morse geodesic rays $\alpha$ and $\alpha'$, starting at $e$ and such that $[\alpha]=[\alpha']=x$.

\begin{lemma}\label{lemma1MorseinsideMartin}
Let $g_n$ be a sequence on $\alpha$ that converges to $x$ and that converges to some point $\xi$ in the Martin boundary $\partial_{\mu}\Gamma$.
Then $\xi$ is minimal.
\end{lemma}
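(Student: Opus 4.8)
The plan is to show that the limiting Martin kernel $K_\xi$ is minimal by exhibiting it as the pointwise limit of the kernels $K(\cdot, g_n)$ and then verifying the defining property of minimality directly through the Ancona-type inequality of Theorem~\ref{maintheoremMorseAncona}. First I would recall that, since $g_n$ lies on the $N$-Morse geodesic ray $\alpha$ for a fixed Morse gauge $N$, all the $g_n$ are $(T,S)$-linear progress points for parameters depending only on $N$, by Propositions~\ref{propMorselinearprogresshierarchicallyhyperbolic} and~\ref{propMorselinearprogressrelativelyhyperbolic}. This means the multiplicative Ancona inequality from Corollary~\ref{corollaryAnconalinearprogress} applies uniformly along $\alpha$: for any $g_m, g_n$ with $m<n$ (so that $e, g_m, g_n$ are aligned in this order along $\alpha$), we have
\begin{equation}\label{anconaalong}
\frac{1}{C}G(e,g_m)G(g_m,g_n)\leq G(e,g_n)\leq C\, G(e,g_m)G(g_m,g_n)
\end{equation}
with $C$ depending only on $N$ and the walk. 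The key point is that this same inequality holds with $e$ replaced by any fixed $g\in\Gamma$, since $g_m$ remains a linear progress point on a geodesic from $g$ to $g_n$ up to a bounded error coming from $d(e,g)$; this uniform control is what lets the kernels stabilize.

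Next I would show that the ratios $K(g,g_n) = G(g,g_n)/G(e,g_n)$ converge by a standard Harnack-plus-Ancona argument. For $g$ fixed and $n$ large, factoring $G(g,g_n)$ and $G(e,g_n)$ through a point $g_m$ far out along $\alpha$ (with $m$ large but $m\ll n$) gives, via~(\ref{anconaalong}),
$$
K(g,g_n)=\frac{G(g,g_n)}{G(e,g_n)}\asymp \frac{G(g,g_m)}{G(e,g_m)}=K(g,g_m),
$$
with multiplicative error tending to $1$ as $m\to\infty$, uniformly in $n>m$. Hence $K(g,g_n)$ is Cauchy (in the multiplicative sense) and the limit $K_\xi(g)$ exists; this identifies $\xi$ as a genuine boundary point with kernel $K_\xi$, and it exhibits $K_\xi$ as harmonic since each $K(\cdot,g_n)$ is harmonic away from $g_n$ and the convergence is locally uniform.

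The heart of the argument, and the step I expect to be the main obstacle, is establishing \emph{minimality} of $K_\xi$. The plan is to verify the strong ratio (or $0$--$\infty$) law: for any positive harmonic $\tilde f \le C' K_\xi$, the ratio $\tilde f(g_n)/K_\xi(g_n)$ converges as $g_n\to\xi$ along $\alpha$, which by the standard criterion (see \cite{Sawyer}) forces $K_\xi$ to be minimal. The mechanism is again the Ancona inequality~(\ref{anconaalong}): it says the Green function is essentially multiplicative along $\alpha$, so the walk conditioned to converge to $\xi$ must pass through every sufficiently far-out $g_m$ with overwhelming probability—this is exactly the content of Proposition~\ref{epsilonAnconaMorsegeodesic}, that trajectories contributing to $G(x,z)$ cannot avoid a bounded ball around the intermediate point $y$. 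Consequently the harmonic measure of the "shadow" of $\xi$ behaves multiplicatively, and any harmonic function dominated by $K_\xi$ is pinned to a constant multiple of it along the sequence. I would make this precise by following the potential-theoretic scheme of \cite{KaimanovichErgodicity}: decompose $\tilde f$ via its Martin representation, use~(\ref{anconaalong}) to show the representing measure concentrates on $\xi$, and conclude $\tilde f = C'' K_\xi$. The delicate point is that the constant $C$ in~(\ref{anconaalong}) depends on the Morse gauge $N$; but since $\alpha$ is a \emph{single} $N$-Morse geodesic, $N$ is fixed throughout, so this dependence is harmless here (it only becomes an issue for continuity of $\Phi$, treated separately).
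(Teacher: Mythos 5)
There is a genuine gap at the critical step, and it originates in a false claim made early on: you assert that factoring $G(g,g_n)$ and $G(e,g_n)$ through $g_m$ gives $K(g,g_n)\asymp K(g,g_m)$ ``with multiplicative error tending to $1$ as $m\to\infty$, uniformly in $n>m$.'' The Ancona-type inequality of Theorem~\ref{maintheoremMorseAncona} (equivalently Corollary~\ref{corollaryAnconalinearprogress}) only supplies a \emph{fixed} multiplicative constant $C=C(N)$; nothing in the paper or in your argument makes this constant improve as $m$ grows. (A constant tending to $1$ is the content of the much stronger ``relative'' or ``strong'' Ancona inequalities, which are not established here --- and this is exactly why the lemma takes convergence of $g_n$ in the Martin boundary as a \emph{hypothesis} rather than deriving it, so your first step is both unjustified and unnecessary.) With only a fixed $C$, your minimality argument stalls: for a positive harmonic $\tilde f$ dominated by $K_\xi$ one can extract an inequality of the form $\tilde f(g)\ge \frac1C K_\xi(g)\,\limsup_m \tilde f(g_m)/K_\xi(g_m)$, but this shows only that $\tilde f$ and $K_\xi$ are comparable within a factor of $C$, not that they are exactly proportional. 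Your alternative route --- ``show the representing measure of $\tilde f$ concentrates on $\xi$'' --- is circular as stated, because the Martin representation with a unique representing measure lives on the \emph{minimal} boundary, and whether $\xi$ is minimal is precisely what is to be proved.

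The missing idea, which is the heart of the paper's proof, is a bootstrapping that removes the constant $C$. The paper normalizes $h$ so that $\sup_g h(g)/K_\xi(g)=1$, uses the complementary function $h'=K_\xi-h$ together with the inequality above to show $h(g_m)/K_\xi(g_m)\to 1$, deduces $h\ge\frac1C K_\xi$, and then iterates: if $h\ge C_nK_\xi$ with $C_n=1-(1-\frac1C)^{n+1}$, then $h_n=\frac{1}{1-C_n}(h-C_nK_\xi)$ is again a normalized positive harmonic function, so the same inequality applies to it and yields $h\ge C_{n+1}K_\xi$; letting $n\to\infty$ gives $h\ge K_\xi$ and hence $h=K_\xi$. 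Without this iteration (or an equivalent device for upgrading a two-sided comparison with constant $C$ to exact equality), the proposal does not establish minimality.
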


\begin{proof}
Up to taking a sub-sequence, we can assume for simplicity that $g_n$ is an increasing sequence, meaning that $d(e,g_n)> d(e,g_m)$ if $n> m$.
We denote by $K_{\xi}$ the limit of $K(\cdot,g_n)$.
Let $H_{\xi}$ be the set of positive harmonic functions $h$ such that $\sup_{g\in \Gamma}\frac{h(g)}{K_{\xi}(g)}=1$.
The only thing to prove is that $H_{\xi}=\{K_{\xi}\}$.

Recall that whenever $f$ and $g$ are two functions such that there exists $C$ such that $\frac{1}{C}f\leq g \leq C f$, we write $f\asymp g$.
Since the $g_n$ all lie on a Morse geodesic,
Theorem~\ref{maintheoremMorseAncona}~shows that if $m<n$, then $G(e,g_n)\asymp G(e,g_m)G(g_m,g_n)$, so that
$K(g_m,g_n)\asymp \frac{1}{G(e,g_m)}$ and thus, for all $g\in \Gamma$,
$G(g,g_m)K(g_m,g_n)\asymp K(g,g_m)$.
Fixing $m$ and letting $n$ tend to infinity, we thus have
\begin{equation}\label{equationminimalpoint1}
    G(g,g_m)K_{\xi}(g_m)\asymp K(g,g_m).
\end{equation}

Letting $g,g'\in \Gamma$, recall that $F(g,g')$ denotes the probability of ever reaching $g'$, starting the random walk at $g$.
Also recall that $F(g,g')G(e,e)=G(g,g')$ (see also \cite[Lemma~1.13~(b)]{Woess}).
Then,~(\ref{triangleGreen}) shows that $G(g,g'')\geq F(g,g')G(g',g'')$ for any $g''$.
Letting $g''$ tend to infinity, we see that for any $\zeta$ in the Martin boundary,
$K_{\zeta}(g)\geq F(g,g')K_{\zeta}(g')$.
Thus, the Martin representation Theorem shows that
any positive harmonic function $h$ satisfies
$$h(g)\geq F(g,g')h(g').$$
Combining this inequality with~(\ref{equationminimalpoint1}), we obtain that there exists $C\geq 1$ such that if $h$ is a positive harmonic function, then
\begin{equation}\label{equationminimalpoint2}
    \forall m, \forall g\in \Gamma, h(g)\geq \frac{1}{C} K(g,g_m)\frac{h(g_m)}{K_{\xi}(g_m)}.
\end{equation}

We now argue by contradiction to prove that $H_{\xi}=\{K_{\xi}\}$ and consider $h\in H_{\xi}$ such that $h\neq K_{\xi}$.
Then, $h\leq K_{\xi}$ so that $h'=K_{\xi}-h$ also is harmonic and non-negative.
Moreover, by harmonicity, if it vanishes at some point, it vanishes everywhere, so $h'$ is in fact positive (see \cite[(1.15)]{Woess}).
We can thus apply~(\ref{equationminimalpoint2}) to $h'$ to get
\begin{equation}\label{equationminimalpoint3}
   \forall g\in \Gamma, h'(g)\geq \frac{1}{C}K_{\xi}(g)\underset{m\to \infty}{\limsup}\frac{h'(g_m)}{K_{\xi}(g_m)}.
\end{equation}
Since $h\in H_{\xi}$, by definition $\mathrm{inf}_{g\in \Gamma}\frac{h'(g)}{K_{\xi}(g)}=0$, so that~(\ref{equationminimalpoint3}) yields
$$\underset{m\to \infty}{\lim}\frac{h'(g_m)}{K_{\xi}(g_m)}=0,$$
hence
$$\underset{m\to \infty}{\lim}\frac{h(g_m)}{K_{\xi}(g_m)}=1.$$
We again use~(\ref{equationminimalpoint2}), but this time applied to $h$ itself and we let $m$ tend to infinity to obtain
\begin{equation}\label{equationminimalpoint4}
    \forall g\in \Gamma, h(g)\geq \frac{1}{C}K_{\xi}(g).
\end{equation}
Note that we necessarily have $C\geq 1$.
If $C=1$, then $h(g)\geq K_{\xi}(g)$ and since $h\in H_{\xi}$, we in fact have $h=K_{\xi}$, which is a contradiction.
Otherwise $\frac{1}{C}<1$ and we define $C_n=\frac{1}{C}\sum_{k=0}^n(1-\frac{1}{C})^k=1-(1-\frac{1}{C})^{n+1}$.
We prove by induction that for all $n$, $h\geq C_nK_{\xi}$.
The case $n=0$ is~(\ref{equationminimalpoint4}).
If the inequality is satisfied at $n$,
the function $h_n=\frac{1}{1-C_n}(h-C_nK_{\xi})$ also is in $H_{\xi}$ and we can apply~(\ref{equationminimalpoint4}) to get that
$$h_n\geq \frac{1}{C}K_{\xi},$$
so that
$$h\geq C_nK_{\xi}+\frac{1}{C}(1-C_n)K_{\xi}=C_{n+1}K_{\xi}.$$
Letting $n$ tend to infinity, we thus have $h\geq K_{\xi}$, which is again a contradiction.
\end{proof}

In other words, the closure of $\alpha$ in the Martin boundary is contained in the minimal Martin boundary.
We need this a priori minimality to prove the following lemma.

\begin{lemma}\label{lemma2MorseinsideMartin}
Let $g_n$ be a sequence on $\alpha$, converging to $x$ in the Morse boundary and converging to $\xi$ in the Martin boundary.
Let $g_n'$ be a sequence on $\alpha'$, also converging to $x$ in the Morse boundary and converging to $\xi'$ in the Martin boundary.
Then, $\xi=\xi'$.
\end{lemma}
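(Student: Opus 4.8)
The plan is to show that the two Martin kernels coincide, $K_\xi=K_{\xi'}$, which forces $\xi=\xi'$ since by construction distinct boundary points determine distinct limit functions. By Lemma~\ref{lemma1MorseinsideMartin}, applied both to $\alpha$ and to $\alpha'$, the functions $K_\xi$ and $K_{\xi'}$ are positive, harmonic and minimal, and they are normalized by $K_\xi(e)=K_{\xi'}(e)=1$. Hence it suffices to produce a single constant $C$ with $K_\xi\leq C\,K_{\xi'}$: minimality of $K_{\xi'}$ then yields $K_\xi=c\,K_{\xi'}$ for some constant $c$, and evaluating at $e$ gives $c=1$. In particular only one inequality is needed, and the reverse estimate can be avoided.

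First I would exploit that $\alpha$ and $\alpha'$ represent the same Morse boundary point, so they lie at finite Hausdorff distance $D$ from one another. For each $n$ I choose a point $h_n$ on $\alpha'$ with $d(g_n,h_n)\leq D$. Since the support of $\mu$ generates $\Gamma$ as a semi-group, any two points at distance at most $D$ are joined by a trajectory of length bounded in terms of $D$ only, so $F(g_n,h_n)$ and $F(h_n,g_n)$ are bounded below by a positive constant depending only on $D$. Combined with the triangle inequality~(\ref{triangleGreen}) this gives $F(g,g_n)\asymp F(g,h_n)$ and $F(e,g_n)\asymp F(e,h_n)$, uniformly in $g$ and $n$, whence $K(g,g_n)\asymp K(g,h_n)$. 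In this way I replace the sequence $g_n$ on $\alpha$ by a fellow-travelling sequence $h_n$ living on $\alpha'$, at the cost of a uniform multiplicative constant.

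The heart of the argument is then the comparison of the two sequences $h_n$ and $g'_m$, which both lie on the single geodesic $\alpha'$. Using the inequality $K_{\xi'}(g)\geq F(g,g')K_{\xi'}(g')$ established in the proof of Lemma~\ref{lemma1MorseinsideMartin}, applied with $g'=h_n$, I obtain
$$K(g,h_n)=\frac{F(g,h_n)}{F(e,h_n)}\leq \frac{K_{\xi'}(g)}{F(e,h_n)\,K_{\xi'}(h_n)}.$$
It remains to bound $F(e,h_n)\,K_{\xi'}(h_n)$ below uniformly in $n$. Here I would invoke the Morse--Ancona inequality, Theorem~\ref{maintheoremMorseAncona}: since $g'_m\to x$ along $\alpha'$, for $m$ large the points $e,h_n,g'_m$ are aligned in this order on $\alpha'$, so $G(e,g'_m)\asymp G(e,h_n)\,G(h_n,g'_m)$ with a constant depending only on the (fixed) Morse gauge $N'$ of $\alpha'$. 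Dividing by $G(e,g'_m)$ and letting $m\to\infty$ gives $K_{\xi'}(h_n)=\lim_m K(h_n,g'_m)\asymp 1/F(e,h_n)$, so that $F(e,h_n)\,K_{\xi'}(h_n)\asymp 1$ uniformly in $n$. Feeding this back yields $K(g,h_n)\leq C\,K_{\xi'}(g)$, hence $K(g,g_n)\leq C'\,K_{\xi'}(g)$, and letting $n\to\infty$ gives $K_\xi\leq C'\,K_{\xi'}$, which finishes the proof by the reduction of the first paragraph.

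The step I expect to be the main obstacle is precisely this comparison of $h_n$ and $g'_m$, namely the fact that two sequences exhausting the same Morse ray at genuinely different heights define the same Martin point. This is exactly where the rough additivity of the Green metric along Morse geodesics, Theorem~\ref{maintheoremMorseAncona}, is indispensable, and it is essential that its constant be uniform, which holds because $\alpha'$ carries a single fixed Morse gauge. The remaining ingredients---the semi-group lower bound on $F$, the triangle inequality~(\ref{triangleGreen}), and the a priori minimality supplied by Lemma~\ref{lemma1MorseinsideMartin}---are comparatively soft.
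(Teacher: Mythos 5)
Your proof is correct, and while it uses the same geometric ingredients as the paper (the fellow-travelling of $\alpha$ and $\alpha'$ to transfer between the two rays, Theorem~\ref{maintheoremMorseAncona} applied to aligned triples on $\alpha'$, and the a priori minimality from Lemma~\ref{lemma1MorseinsideMartin}), it concludes by a genuinely different and arguably cleaner route. The paper only establishes the comparison $K_{\xi}(g_m)\asymp K_{\xi'}(g_m)$ \emph{along the sequence} $g_m$, and then invokes an external potential-theoretic fact (Ancona's Lemma~1.7, that for distinct minimal points the ratio of kernels tends to $0$ along a sequence converging to one of them) to derive a contradiction. You instead upgrade this to a \emph{global} pointwise domination $K_{\xi}\leq C\,K_{\xi'}$ on all of $\Gamma$, via the chain $K(g,h_n)\leq K_{\xi'}(g)/\bigl(F(e,h_n)K_{\xi'}(h_n)\bigr)$ together with the uniform estimate $F(e,h_n)K_{\xi'}(h_n)\asymp 1$ coming from Theorem~\ref{maintheoremMorseAncona}, and then finish with nothing more than the definition of minimality and the normalization at $e$. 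Your version is self-contained (no citation of Ancona's ratio lemma is needed), uses minimality of only one of the two kernels, and proves the slightly stronger statement that the Martin kernel of any fellow-travelling sequence is dominated by $K_{\xi'}$ everywhere; the paper's version is marginally shorter because it stops at comparability along the ray. All the intermediate estimates you use check out: the uniform lower bound on $F$ between points at bounded distance, the identity $K(g,h)=F(g,h)/F(e,h)$, the inequality $K_{\zeta}(g)\geq F(g,g')K_{\zeta}(g')$ from the proof of Lemma~\ref{lemma1MorseinsideMartin}, and the uniformity in $n$ of the Ancona constant since $\alpha'$ carries a single fixed Morse gauge.
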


\begin{proof}
We fix $m$.
Since $\alpha$ and $\alpha'$ are asymptotic geodesics, there exists a point $\tilde{g}_m$ on $\alpha'$ within a uniformly bounded distance from $g_m$.
In particular, for any $g\in \Gamma$ $G(g,\tilde{g}_m)\asymp G(g,g_m)$ and $G(\tilde{g}_m,g)\asymp G(g_m,g)$.
Since the Morse gauges of $\alpha$ and $\alpha'$ are fixed, we can use Theorem~\ref{maintheoremMorseAncona} and show that
if $n$ is large enough, then
$$G(e,g_n')\asymp G(e,\tilde{g}_m)G(\tilde{g}_m,g_n')\asymp G(e,g_m)G(g_m,g_n')$$
and
$$G(e,g_n)\asymp G(e,g_m)G(g_m,g_n).$$
In particular, $K(g_m,g_n')\asymp \frac{1}{G(e,g_m)}\asymp K(g_m,g_n)$.
Letting $n$ tend to infinity, we thus have
\begin{equation}\label{equationsamelimitMorsetoMartin}
    \forall m,K_{\xi}(g_m)\asymp K_{\xi'}(g_m).
\end{equation}

We now argue by contradiction to prove that $\xi=\xi'$.
First, Lemma~\ref{lemma1MorseinsideMartin} shows that both $K_{\xi}$ and $K_{\xi'}$ are minimal harmonic functions.
Thus, if we assume that $\xi\neq \xi'$, \cite[Lemma~1.7]{Anconapotentiel} shows that $\frac{K_{\xi'}(g_m)}{K_{\xi}(g_m)}$ converges to 0, when $m$ tends to infinity.
We thus get a contradiction with~(\ref{equationsamelimitMorsetoMartin}), so that $\xi=\xi'$.
\end{proof}

We can define a map $\Phi$ from the Morse boundary to the Martin boundary, sending a point $x$ to the unique point $\xi$ in the closure of $\alpha$ in the Martin boundary, where $\alpha$ is any Morse geodesic such that $[\alpha]=x$.
Uniqueness is given by Lemma~\ref{lemma2MorseinsideMartin}, which also shows that $\xi$ does not depend on the choice of $\alpha$.
Lemma~\ref{lemma1MorseinsideMartin} shows that $\xi$ is minimal.
We also denote by $\Phi_N$ the restriction of $\Phi$ to the $N$-Morse boundary.

\subsection{Injectivity}
We prove the following here.

\begin{proposition}
The map $\Phi$ is one-to-one.
\end{proposition}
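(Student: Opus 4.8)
The plan is to show that if $x \neq y$ are two distinct points in the Morse boundary, then their images $\Phi(x) = \xi$ and $\Phi(y) = \zeta$ are distinct points in the Martin boundary. Since both $\xi$ and $\zeta$ are minimal by Lemma~\ref{lemma1MorseinsideMartin}, I can exploit the same dichotomy for minimal harmonic functions used in the proof of Lemma~\ref{lemma2MorseinsideMartin}: by \cite[Lemma~1.7]{Anconapotentiel}, two \emph{distinct} minimal points $\xi \neq \zeta$ satisfy $\frac{K_\zeta(g_m)}{K_\xi(g_m)} \to 0$ along any sequence $g_m \to \xi$, whereas if the Martin kernels were comparable along such a sequence the points would have to coincide. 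So the whole task reduces to producing a sequence of group elements witnessing that $K_\xi$ and $K_\zeta$ are \emph{not} comparable, which is exactly what distinctness in the Morse boundary should buy.

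First I would fix Morse geodesic rays $\alpha$ and $\beta$ from $e$ representing $x$ and $y$ respectively, with $[\alpha] = x \neq y = [\beta]$. By the visibility property of the Morse boundary, there is a bi-infinite Morse geodesic $\gamma$ with one end asymptotic to $\alpha$ and the other to $\beta$; call its two rays $\gamma^+$ (toward $x$) and $\gamma^-$ (toward $y$). The key geometric input is that $e$ lies (up to bounded distance) on this bi-infinite Morse geodesic, so I can take sequences $g_n \to x$ along $\gamma^+$ and $h_n \to y$ along $\gamma^-$ with the basepoint $e$ aligned between them. Then for each $n$, the triple $h_n, e, g_n$ is aligned in this order on the $N$-Morse geodesic $\gamma$, and Theorem~\ref{maintheoremMorseAncona} applies to give
$$G(h_n, g_n) \asymp G(h_n, e)\, G(e, g_n).$$
This additivity of the Green metric through the basepoint is the engine of the argument.

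Next I would convert this into a statement about the Martin kernels. Rewriting the Ancona inequality in kernel form, for a fixed element $g_m$ on $\gamma^-$ (the $y$-side) and a point $g_n$ on $\gamma^+$ (the $x$-side) going to infinity, the alignment $g_m, e, g_n$ together with Theorem~\ref{maintheoremMorseAncona} forces $G(e,g_n) \asymp G(e,g_m)^{-1}\cdots$ type relations that, after letting $n\to\infty$, pin down the ratio $K_\xi(g_m)$ and show it stays bounded below as $m \to \infty$ along the $y$-direction; meanwhile $K_\zeta(g_m) \asymp 1$ by the normalization of the kernel along its own defining sequence. Concretely, the comparability $G(e,g_m)K(g_m, \cdot)\asymp \text{const}$ obtained as in~(\ref{equationminimalpoint1}) shows $K_\xi(g_m)$ does \emph{not} tend to $0$ relative to $K_\zeta(g_m)$ along the sequence $g_m \to \zeta$. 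If $\xi = \zeta$ this is consistent, but it directly contradicts the decay $\frac{K_\xi(g_m)}{K_\zeta(g_m)} \to 0$ that \cite[Lemma~1.7]{Anconapotentiel} would force were $\xi \neq \zeta$ two distinct minimal points. Hence $\xi = \zeta$ is impossible, giving $\Phi(x) \neq \Phi(y)$.

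The main obstacle I anticipate is the geometric step of realizing the basepoint $e$ as an aligned interior point of a single bi-infinite Morse geodesic through both $x$ and $y$, and ensuring the resulting bi-infinite geodesic has a \emph{uniform} Morse gauge so that Theorem~\ref{maintheoremMorseAncona} can be invoked with a single constant $C$ for all the aligned triples $h_n, e, g_n$. This requires knowing that the visibility geodesic joining two points of the $N$-Morse boundary is itself $N'$-Morse for a gauge $N'$ depending only on $N$, which is available from \cite{Cordes}, and that projecting $e$ onto $\gamma$ moves it a bounded amount so the Ancona-type additivity still holds after absorbing bounded errors into the $\asymp$ constants. Once this alignment is set up cleanly, the potential-theoretic conclusion via the minimality dichotomy is essentially the same computation already carried out in Lemma~\ref{lemma2MorseinsideMartin}, run in the contrapositive direction.
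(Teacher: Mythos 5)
Your geometric setup matches the paper's: you join $x$ and $y$ by a Morse geodesic of controlled gauge passing within bounded distance of $e$ (the paper obtains this from thinness of Morse triangles applied to geodesics $\beta_{m,n}$ from $g_m$ to $g'_n$, via \cite[Lemma~2.3]{Cordes} and \cite[Lemma~2.3]{CharneyCordesMurray}; your appeal to visibility accomplishes the same thing, and the uniform-gauge issue you flag is handled exactly as you suggest), and you apply Theorem~\ref{maintheoremMorseAncona} to triples aligned through $e$. The two kernel estimates this yields are also the correct ones. But your concluding potential-theoretic step is wrong in two ways. First, the ratio computation is backwards: with $g_m\to\zeta$ on the $y$-side, (\ref{equationminimalpoint1}) gives $K_\zeta(g_m)\asymp 1/G(e,g_m)\to\infty$, while the alignment $g_m,e,g_n$ gives $K(g_m,g_n)\asymp G(g_m,e)$ and hence, letting $n\to\infty$, $K_\xi(g_m)\asymp G(g_m,e)\to 0$. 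Therefore $K_\xi(g_m)/K_\zeta(g_m)\asymp G(g_m,e)\,G(e,g_m)\to 0$: the ratio \emph{does} decay, which is exactly what \cite[Lemma~1.7]{Anconapotentiel} predicts for distinct minimal points, so no contradiction can be extracted from that lemma. Second, even granting a contradiction, your reductio is logically inverted: you assume $\xi\neq\zeta$, claim a contradiction, and then conclude that ``$\xi=\zeta$ is impossible'' --- a contradiction derived from $\xi\neq\zeta$ would prove $\xi=\zeta$, i.e., the failure of injectivity.

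The repair is to discard minimality and the dichotomy of \cite[Lemma~1.7]{Anconapotentiel} altogether: the two estimates you have already established, namely $K_\zeta(g_m)\to\infty$ and $K_\xi(g_m)\to 0$ along the \emph{same} sequence $g_m$, directly produce an $m$ with $K_\xi(g_m)\neq K_\zeta(g_m)$, hence $\xi\neq\zeta$. This direct comparison is precisely the paper's proof; minimality is needed for Lemmas~\ref{lemma1MorseinsideMartin} and~\ref{lemma2MorseinsideMartin} (well-definedness of $\Phi$) but plays no role in injectivity.
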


\begin{proof}
Let $[\alpha]$ and $[\alpha']$ be two points in the Morse boundary.
Up to taking the maximum of the two Morse gauges, we can assume that $\alpha$ and $\alpha'$ are two $N$-Morse geodesic rays, starting at $e$, for some fixed $N$.
Let $\xi=\Phi(\alpha)$ and $\xi'=\Phi(\alpha')$ and let $g_n$ be a sequence on $\alpha$ that converges to $\xi\in \partial_{\mu}\Gamma$ and $g'_n$ a sequence on $\alpha'$ that converges to $\xi'\in \partial_{\mu}\Gamma$.
We will prove that $K_{\xi}(g_m)$ tends to infinity, whereas $K_{\xi'}(g_m)$ converges to 0, when $m$ tends to infinity.

Let $m\leq n$.
Theorem~\ref{maintheoremMorseAncona} shows that $G(e,g_n)\leq CG(e,g_m)G(g_m,g_n)$ for some fixed $C\geq 0$.
Thus, $K(g_m,g_n)\geq \frac{1}{C}\frac{1}{G(e,g_m)}$.
Letting $n$ tend to infinity, we see that
$K_{\xi}(g_m)\geq \frac{1}{C}\frac{1}{G(e,g_m)}$.
Since $g_m$ tends to infinity, $G(e,g_m)$ converges to 0.
This proves that $K_{\xi}(g_m)$ tends to infinity.

Now, let $\beta_{m,n}$ be a geodesic from $\gamma_m$ to $\gamma'_n$.
According to \cite[Lemma~2.3]{Cordes}, $\beta_{m,n}$ is $N'$-Morse for some $N'$ that only depends on $N$.
Since Morse triangles are thin (see \cite[Lemma~2.3]{CharneyCordesMurray}), $\beta_{m,n}$ passes within a uniformly bounded distance of $e$.
Using again Theorem~\ref{maintheoremMorseAncona}, there exists $C'$ such that $G(g_m,g'_n)\leq C'G(g_m,e)G(e,g'_n)$.
Thus, $K(g_m,g'_n)\leq C'G(g_m,e)$ and so $K_{\xi'}(g_m)\leq C'G(g_m,e)$.
Again, $G(g_m,e)$ converges to 0, hence so does $K_{\xi'}(g_m)$.

Consequently, we can find $m$ such that $K_{\xi}(g_m)\neq K_{\xi'}(g_m)$, so that $\xi\neq \xi'$.
\end{proof}

\subsection{Continuity}

Recall that we can endow the $N$-Morse boundary with a topology so that convergence is defined as follows.
A sequence $x_n\in \partial_{M,e}^N\Gamma$ converges to $x\in \partial_{M,e}^N\Gamma$ if there exists $N$-Morse geodesic rays $\alpha_n$ with $\alpha_n(0)=e$ and $[\alpha_n]=x_n$, such that
every sub-sequence of $\alpha_n$ contains a sub-sequence that converges uniformly on compact sets to a geodesic ray $\alpha$ with $[\alpha]=x$, see \cite{Cordes} for more details.

\begin{proposition} \label{prop:topological embedding}
Let $N$ be a Morse gauge.
The map $\Phi_N:\partial^N_{M,e}\Gamma\rightarrow \partial_{\mu}\Gamma$ is a topological embedding.
\end{proposition}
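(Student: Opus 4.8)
The plan is to deduce the statement from three ingredients: compactness of the domain $\partial^N_{M,e}\Gamma$, injectivity of $\Phi_N$ (already established), and continuity of $\Phi_N$. Indeed, since the Martin boundary $\partial_\mu\Gamma$ is metrizable, hence Hausdorff, a continuous injection from a compact space into a Hausdorff space is automatically a homeomorphism onto its image, that is, a topological embedding. Compactness of $\partial^N_{M,e}\Gamma$ follows from the Arzel\`a--Ascoli theorem together with the fact that a limit, uniform on compact sets, of $N$-Morse geodesic rays based at $e$ is again $N$-Morse (see \cite{Cordes}). So the only thing left to prove is continuity, and this is where the argument is genuinely new.

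To prove continuity I would argue sequentially. Let $x_n\to x$ in $\partial^N_{M,e}\Gamma$, put $\xi=\Phi_N(x)$ and $\xi_n=\Phi_N(x_n)$, and choose $N$-Morse representatives $\alpha_n$, $\alpha$ based at $e$ as in the definition of the topology; since $\Gamma$ is discrete, $\alpha_n$ and $\alpha$ then agree on initial segments of length $T_n\to\infty$. As $\partial_\mu\Gamma$ is metrizable it suffices to show that every subsequential limit of $(\xi_n)$ equals $\xi$. After extracting, assume $\alpha_n\to\alpha$ uniformly on compact sets and $\xi_n\to\eta$ in the compact Martin compactification; because $\Gamma$ is open in $\overline{\Gamma}^{\mu}$ we have $\eta\in\partial_\mu\Gamma$ and $K_{\xi_n}\to K_\eta$ pointwise. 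The goal is to show $\eta=\xi$. The crux is a comparison with constants depending only on $N$: there exists $C_1=C_1(N)$ so that for every $g\in\Gamma$ and all $n$ large (depending on $g$) one has $K_{\xi_n}(g)\le C_1 K_\xi(g)$.

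To prove this comparison, fix $g$ and pick a reference point $q=\alpha(s)$ on the common segment, with $s=d(e,g)+O_N(1)\le T_n$, so that $q$ lies on both $\alpha$ and $\alpha_n$. Writing $w=\alpha_n(T')$ with $T'\to\infty$, the denominator of $K(g,w)=G(g,w)/G(e,w)$ factors the easy way by submultiplicativity~(\ref{triangleGreen'}), namely $G(e,q)G(q,w)\lesssim G(e,w)$, while for the numerator one wants $G(g,w)\lesssim G(g,q)G(q,w)$; dividing and cancelling $G(q,w)$ then gives $K_{\xi_n}(g)\lesssim K(g,q)$, and an analogous, now purely on-geodesic, estimate along $\alpha$ gives $K(g,q)\lesssim K_\xi(g)$. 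The main obstacle is precisely the numerator estimate $G(g,w)\lesssim G(g,q)G(q,w)$ with a constant independent of $g$, since $g$ is an arbitrary group element which need not lie near $\alpha$. A geodesic $[g,w]$ has one endpoint deep in the $N$-Morse ray, so by thinness of Morse triangles (\cite{CharneyCordesMurray}) it fellow-travels $\alpha$ from a transition point onward, the transition happening within distance $d(e,g)+O_N(1)$ of $e$; this non-Morse region near $g$ is bounded once $g$ is fixed. Choosing $q$ beyond the window that the proof of Proposition~\ref{epsilonAnconaMorsegeodesic} inspects around the reference point ensures that every point relevant to that proof lies in the fellow-travelling portion of $[g,w]$, where $q$ is a linear-progress point with constants depending only on $N$; this is what lets the Ancona factorization of Theorem~\ref{maintheoremMorseAncona} and Corollary~\ref{corollaryAnconalinearprogress} be applied uniformly in $g$. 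Making this uniformity robust to the bounded excursion near $g$ is the delicate part.

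Granting the comparison, let $n\to\infty$ to obtain $K_\eta\le C_1 K_\xi$ for all $g$, with a single constant $C_1$. Since $K_\eta$ is a positive harmonic function and, by Lemma~\ref{lemma1MorseinsideMartin}, $K_\xi$ is minimal, the definition of minimality forces $K_\eta=c K_\xi$; evaluating at $e$, where both kernels equal $1$, gives $c=1$, hence $K_\eta=K_\xi$ and therefore $\eta=\xi$, as distinct boundary points have distinct Martin kernels. Thus every subsequential limit of $(\xi_n)$ is $\xi$, so $\xi_n\to\xi$; this gives continuity and, combined with compactness of the domain and injectivity, shows that $\Phi_N$ is a topological embedding.
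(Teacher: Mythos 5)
Your overall skeleton matches the paper's: continuity together with injectivity, compactness of $\partial^N_{M,e}\Gamma$ and the Hausdorff property of $\partial_\mu\Gamma$ gives the embedding, and continuity is reduced to showing that every subsequential Martin limit $\eta$ of $(\xi_n)$ satisfies a one-sided comparison with $K_\xi$, which minimality (Lemma~\ref{lemma1MorseinsideMartin}) then upgrades to $\eta=\xi$. Your endgame is sound: $K_\eta\le C_1K_\xi$ with a \emph{single} constant, plus minimality of $K_\xi$ and the normalization at $e$, does force $K_\eta=K_\xi$. The genuine gap is precisely the step you flag as ``delicate'' and do not carry out: the estimate $G(g,w)\lesssim G(g,q)G(q,w)$ with a constant depending only on $N$, uniformly over $g\in\Gamma$. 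That uniformity is indispensable to your argument --- with a $g$-dependent constant the inequality $K_\eta(g)\le C_1(g)K_\xi(g)$ is vacuous and minimality gives nothing --- but the mechanism you propose does not deliver it. A geodesic $[g,w]$ with one endpoint on the ray is $N'$-Morse only for a gauge $N'$ depending on $N$ \emph{and on $g$} (this is \cite[Lemma~2.8]{Cordes}, and is exactly how the paper phrases it), so the linear-progress constants $(T,S)$ of $q$ on $[g,w]$, and hence the radius and the multiplicative constant produced by Proposition~\ref{epsilonAnconaAcylindricallyhyperbolic} and Corollary~\ref{corollaryAnconalinearprogress}, a priori depend on $d(e,g)$. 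Pushing $q$ far beyond the transition point does not repair this: the proof of Proposition~\ref{epsilonAnconaAcylindricallyhyperbolic} has to control trajectories from $g$ to $w$ whose last approach to the geodesic before $q$ occurs anywhere on the initial, non-uniformly-Morse portion of $[g,w]$ near $g$ (when $g$ sits deep in a parabolic subgroup or a non-maximal domain, so do the points $g',g''$ of that proof), and the linear-progress inequality is required for exactly those pairs.

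The paper avoids the issue by never asking for uniformity in $g$. It fixes $g$, passes to the $N'(g)$-Morse geodesics $\beta_n$ from $g$ asymptotic to $\alpha_n$, and invokes the $\epsilon$-version (Proposition~\ref{epsilonAnconaMorsegeodesic}) with a radius $R=R(N'(g),\epsilon)$ that is allowed to depend on $g$. Decomposing a path from $g$ to $\tilde g_n$ at its last visit to $B_R(g_n)$ and using that $K(g,u)\le K_\xi(g)+\epsilon$ for all $u\in B_R(g_n)$ and $n$ large (a consequence of $g_n\to\xi$ and minimality of $\xi$), it obtains $(1-\epsilon)K(g,\tilde g_n)\le K_\xi(g)+\epsilon$; letting $n\to\infty$ and then $\epsilon\to 0$ yields the constant-free pointwise inequality $K_\eta(g)\le K_\xi(g)$, which needs no uniformity in $g$ and feeds into the same minimality argument you use. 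To close your gap you should replace the uniform multiplicative comparison by this $\epsilon$--$R(g,\epsilon)$ scheme (or else actually prove the $g$-uniform Ancona inequality along $[g,w]$, which is a substantially stronger statement than anything established in the paper).
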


\begin{proof}
Let $x_n$ converge to $x$ in $\partial_{M,e}^N\Gamma$ and $\xi_n=\Phi_N(x_n)$, $\xi=\Phi_N(x)$.
Let $\alpha_n$ be Morse geodesics starting at $e$ as above, that is, from every subsequence of $\alpha_n$, one can extract a sub-sequence that converges to some $N$-Morse geodesic $\alpha$, uniformly on compact sets and where $[\alpha_n]=x_n$, $[\alpha]=x$.
We want to prove that $\xi_n$ converges to $\xi$.
Since the Martin boundary is compact, we only have to prove that $\xi$ is the only limit point of $\xi_n$.
We first take a sub-sequence $\alpha_{\sigma_1(n)}$ such that that $\xi_{\sigma_1(n)}$ converges to some $\xi'$ and we now prove that $\xi'=\xi$.
We already know that $\xi$ is minimal, so we only have to prove that $K_{\xi'}\leq K_{\xi}$.

We take a sub-sub-sequence $\alpha_{\sigma_2(n)}$ and we take $\alpha$ respectively representing $x_{\sigma_2(n)}$ and $x$, such that $\alpha_{\sigma_2(n)}$ converges to $\alpha$, uniformly on compact sets.
For simplicity, we write $\alpha_n=\alpha_{\sigma_2(n)}$.
Then, there is a sequence of points $g_n$ on $\alpha$ going to infinity such that the geodesic $\alpha_n$ fellow travels with $\alpha$ up to $g_n$.

Let $g \in \Gamma$.
We want to prove that $K_{\xi'}(g)\leq K_{\xi}(g)$.
According to \cite[Lemma~2.8]{Cordes}, one can find an $N'$-Morse geodesic $\beta$, starting at $g$, which is asymptotic to $\alpha$, with $N'$ that only depends on $N$ and $g$.
Similarly, one can find $N'$-Morse geodesics $\beta_n$, starting at $g$ and asymptotic to $\alpha_n$.
Recall that Morse triangles are thin, see \cite[Lemma~2.3]{CharneyCordesMurray}.
Thus, $\beta$ eventually lies a bounded distance away from $\alpha$.
More precisely, the geodesic $\beta$ roughly travels from $g$ to its projection $\hat{g}$ on $\alpha$ and then fellow travel with $\alpha$.
Similarly, for large enough $n$, the geodesics $\beta_n$ roughly travel from $g$ to $\hat{g}$, then fellow travel with $\alpha$ up to $g_n$ and then fellow travel with $\alpha_n$.

To sum up, for every large enough $n$, we can find $g_n$ on $\alpha$ within a uniformly bounded distance of a point on $\beta$, a point on $\beta_n$ and a point on $\alpha_n$.
Let us call $g_n'$ the corresponding point on $\alpha_n$.
Also, notice that $g_n$ goes to infinity, see the picture at the end of the proof.

We fix $\epsilon>0$.
Since $N'$ is fixed (although it depends on $g$), Proposition~\ref{epsilonAnconaMorsegeodesic} shows that
there exists $R$ such that for every point $\tilde{g}_n$ on $\alpha_n$ such that $e,g_n',\tilde{g}_n$ lie in this order on $\alpha_n$,
\begin{equation}\label{continuity1}
    G(g,\tilde{g}_n,B_R(g_n)^c)\leq \epsilon G(g,\tilde{g}_n).
\end{equation}
Decomposing a path from $g$ to $\tilde{g}_n$ according to its last visit to $B_R(g_n)$, we have
\begin{equation}\label{continuity2}
\begin{split}
    G(g,\tilde{g}_n)&=G(g,\tilde{g}_n;B_R(g_n)^c)\\
    &+\sum_{u\in B_R(g_n)}G(g,u)G(u,\tilde{g}_n;B_R(g_n)^c).
\end{split}
\end{equation}

Since $g_n$ goes to infinity, it converges to $\xi$ in the Martin boundary.
Let us prove that for large enough $n$, for any $u\in B_R(g_n)$,
$K(g,u)\leq K_{\xi}(g)+\epsilon$.
Assume by contradiction this is not the case, so that in particular, there is a sequence $u_n\in B_R(g_n)$ such that
$K(g,u_n)$ does not converge to $K_{\xi}(g)$.
Up to taking a sub-sequence, we can assume that $u_n$ converges to some point $\xi''$ in the Martin boundary.
Since, $d(u_n,g_n)$ is uniformly bounded, for any $g'$,
$K(g',u_n)\leq CK(g',g_n)$, for some constant $C$ and similarly, $K(g',g_n)\leq CK(g',u_n)$.
This proves that $K_{\xi''}(g')\asymp K_{\xi}(g')$ and since $\xi$ is minimal, we have $\xi''=\xi$.
In particular, $K(g,u_n)$ converges to $K_{\xi}(g)$, so we get a contradiction.

Thus, for large enough $n$, we have, for any $u\in B_R(g_n)$,
$$G(g,u)\leq G(e,u)(K_{\xi}(g)+\epsilon).$$
Using~(\ref{continuity1}) and~(\ref{continuity2}), we get
$$(1-\epsilon)G(g,\tilde{g}_n)\leq (K_{\xi}(g)+\epsilon)\sum_{u\in B_R(g_n)}G(e,u)G(u,\tilde{g}_n;B_R(g_n)^c).$$
Decomposing now a path from $e$ to $\tilde{g}_n$ according to its last visit to $B_R(g_n)$, we see that
$$\sum_{u\in B_R(g_n)}G(e,u)G(u,\tilde{g}_n;B_R(g_n)^c)\leq G(e,\tilde{g}_n),$$
hence $(1-\epsilon)G(g,\tilde{g}_n)\leq (K_{\xi}(g)+\epsilon)G(e,\tilde{g}_n)$ and so
\begin{equation}\label{continuity3}
(1-\epsilon)K(g,\tilde{g}_n)\leq K_{\xi}(g)+\epsilon.
\end{equation}

Let us now prove that we can find such a sequence $\tilde{g}_n$ on $\alpha_n$ such that $\tilde{g}_n$ converges to $\xi'$ in the Martin boundary.
Indeed, for fixed $n$, any sequence $\tilde{g}_{n,m}$ on $\alpha_n$ that goes to infinity when $m$ goes to infinity converges to $\xi_n$.
Now, recall that the Martin compactification is metrizable.
Let us fix an arbitrary distance $d_\mu$ on it so that for every $n$, for every large enough $m$ (depending on $n$),
$d_{\mu}(\tilde{g}_{n,m},\xi_n)\leq \frac{1}{n}$.
Taking $m$ large enough, we can assume that $e,g_n',\tilde{g}_{n,m}$ do lie in this order on $\alpha_n$.
For every $n$, let us fix such an $m_n$ and write $\tilde{g}_n=\tilde{g}_{n,m_n}$.
Then, $d_{\mu}(\tilde{g}_n,\xi_n)\leq \frac{1}{n}$ and since $\xi_n$ converges to $\xi'$, so does $\tilde{g}_n$.

Letting $n$ tend to infinity in~(\ref{continuity3}), we get
$(1-\epsilon)K_{\xi'}(g)\leq K_{\xi}(g)+\epsilon$ and since $\epsilon$ is arbitrary, we have
$K_{\xi'}(\gamma)\leq K_{\xi}(\gamma)$.
Finally, since $\xi$ is minimal, this proves that $\xi'=\xi$, which proves the $\Phi_N$ is continuous.

Since $\partial^N_{M,e}\Gamma$ is compact and $\partial_\mu \Gamma$ is Hausdorff follows from the closed map lemma that $\Phi_N$ is a topological embedding.
\end{proof}

\begin{center}
\begin{tikzpicture}[scale=.6]
\draw (0,0)--(0,7);
\draw (0,-.3) node{$e$};
\draw (0,7.3) node{$\alpha$};
\draw (0,1)--(-4.5,7);
\draw (-4.7,7.3) node{$\alpha_1$};
\draw (0,1.5)--(-3.8,7);
\draw (-4,7.3) node{$\alpha_2$};
\draw (0,4.5)--(-1.5,7);
\draw (-1.7,7.3) node{$\alpha_n$};
\draw (4,3)--(0,3);
\draw (4.3,3) node{$g$};
\draw (-.3,4.3) node{$g_n$};
\draw (-.3,3) node{$\hat{g}$};
\draw plot [smooth] coordinates {(4,3) (1,3.2) (.2,4) (0,7)};
\draw plot [smooth] coordinates {(4,3) (.4,3.2) (0,4.7) (-1.5,7)};
\end{tikzpicture}
\end{center}

All this concludes the proof of Theorem~\ref{maintheoremMorsetoMartin}, for the map $\Phi$ is continuous with respect to the direct limit topology if and only if all the maps $\Phi_N$ are continuous for fixed $N$. \qed

A small application of Proposition \ref{prop:topological embedding} sheds some light on the topology of the Martin boundary of the mapping class group, $\partial_\mu \mathrm{MCG}(S)$.
\begin{corollary}\label{corollaryMCGs}
For any $n \geq 2$ there exists a surface of finite type $S$ such that $\partial_\mu \mathrm{MCG}(S)$ contains a topologically embedded $(n-1)$-sphere.
\end{corollary}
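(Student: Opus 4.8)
The plan is to transfer an embedded sphere sitting in the Morse boundary of a mapping class group across the fixed-gauge map $\Phi_N$ of Proposition~\ref{prop:topological embedding}. The whole point is that, for a \emph{fixed} Morse gauge $N$, the map $\Phi_N\colon \partial^N_{M,e}\Gamma \to \partial_\mu\Gamma$ is a topological embedding, i.e.\ a homeomorphism onto its image; composing it with a topological embedding of a sphere into $\partial^N_{M,e}\Gamma$ will then produce a topological embedding of that sphere into the Martin boundary. So the corollary reduces to locating the sphere inside a single gauge stratum of the Morse boundary of $\mathrm{MCG}(S)$.

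First I would invoke the results of Cordes on Morse boundaries of mapping class groups. For every $n\geq 2$ there is a finite-type surface $S$ and a single Morse gauge $N$ such that $\partial^N_{M,e}\mathrm{MCG}(S)$ contains a topologically embedded $(n-1)$-sphere $\Sigma$, see~\cite{Cordes} and the survey~\cite{Cordessurvey}. The essential feature to extract here is that $\Sigma$ is swept out by a \emph{uniformly} Morse family of geodesic rays, so that it lies entirely in one stratum $\partial^N_{M,e}\mathrm{MCG}(S)$ rather than merely in the direct limit $\partial_M\mathrm{MCG}(S)$. This is exactly the input needed to apply $\Phi_N$, since the embedding statement of Proposition~\ref{prop:topological embedding} is available only for a fixed gauge.

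Then I would simply restrict $\Phi_N$ to $\Sigma$. Since $\Phi_N$ is a homeomorphism onto its image, and $\Sigma$ is compact while $\partial_\mu\mathrm{MCG}(S)$ is metrizable, hence Hausdorff, the restriction $\Phi_N|_\Sigma$ is a homeomorphism onto $\Phi_N(\Sigma)$ by the closed map lemma. As $\Sigma\cong S^{n-1}$, its image $\Phi_N(\Sigma)$ is a topologically embedded $(n-1)$-sphere inside $\partial_\mu\mathrm{MCG}(S)$, which is what we want.

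The only delicate point is this single-gauge requirement. The constant $C$ of Theorem~\ref{maintheoremMorseAncona} depends on the Morse gauge, so the global map $\Phi$ is only controlled for the direct limit topology and the topological embedding is genuinely available stratum by stratum. Thus the main thing to verify is that the sphere furnished by~\cite{Cordes} really does live inside one $\partial^N_{M,e}\mathrm{MCG}(S)$ and is embedded there for the intrinsic fixed-gauge topology; granting that, the passage through $\Phi_N$ is formal and yields the embedded $(n-1)$-sphere in the Martin boundary.
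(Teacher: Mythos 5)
Your proposal is correct and takes essentially the same route as the paper: locate the $(n-1)$-sphere in the Morse boundary of $\mathrm{MCG}(S)$ via Cordes's results (the paper passes through the Morse boundary of Teichm\"uller space and its identification with that of $\mathrm{MCG}(S)$) and push it forward through the fixed-gauge topological embedding of Proposition~\ref{prop:topological embedding}. Your observation about the sphere having to sit in a single stratum $\partial^N_{M,e}\mathrm{MCG}(S)$ is the right delicate point, and it is handled by compactness of the sphere together with the countable exhaustion by compact strata from Lemma~\ref{countableunion}.
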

\begin{proof}
By \cite[Corollary 4.4]{Cordes} for any $n \geq 2$ there exists a surface of finite type $S$ such that the Morse boundary of the Teichm\"uller space $Teich(S)$ contains a topologically embedded $(n-1)$-sphere, and by \cite[Theorem 4.12]{Cordes} the Morse boundary of $Teich(S)$ coincides with the Morse boundary of $\mathrm{MCG}(S)$.
The result now follows from Theorem~\ref{maintheoremMorsetoMartin}.
\end{proof}


\section{Measure of the Morse boundary}\label{Sectionmeasurezero}

Consider a transient random walk on a finitely generated group $\Gamma$.
As explained in the introduction, the random walk almost surely converges to a point $X_{\infty}$ in the Martin boundary.
Denoting the law of $X_{\infty}$ by $\nu$, $(\partial_\mu\Gamma,\nu)$ is a model for the Poisson boundary.
The measure $\nu$ is called the harmonic measure.
See \cite{Sawyer} for more details.

Recall that a measure $\kappa$ on a measurable space $X$ on which $\Gamma$ acts is called $\mu$-stationary if $\mu * \kappa=\kappa$, where by definition, for every measurable set $A\subset X$,
$$\mu*\kappa(A)=\sum_{g\in \Gamma}\mu(g)\kappa(g^{-1}A).$$

The following is folklore.
Since the proof is very short, we write it for convenience.
\begin{lemma}\label{lemmaharmonicstationary}
The harmonic measure $\nu$ on the Martin boundary is $\mu$-stationary.
\end{lemma}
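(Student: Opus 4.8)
The plan is to prove that the harmonic measure $\nu$ is $\mu$-stationary by exploiting the characterization of $\nu$ as the law of the limit point $X_\infty$ of the random walk, together with the Markov property of the walk. Recall that $\nu(A) = \Pp(X_\infty \in A)$ for every Borel set $A \subset \partial_\mu\Gamma$, where $X_n = g_1 \cdots g_n$ and $X_\infty = \lim_n X_n$ in the Martin compactification. The key observation is that, conditioned on the first increment $g_1 = g$, the shifted walk $g^{-1}X_n = g_2 \cdots g_n$ is again a $\mu$-random walk started at $e$, and its limit is $g^{-1}X_\infty$; since $g_1$ takes the value $g$ with probability $\mu(g)$, averaging over the first step should reproduce $\nu$.

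First I would write out the definition of stationarity we must verify, namely that for every Borel set $A \subset \partial_\mu\Gamma$,
\begin{equation*}
\nu(A) = \mu*\nu(A) = \sum_{g\in\Gamma}\mu(g)\,\nu(g^{-1}A).
\end{equation*}
Next I would condition on the first step of the walk. By the total probability formula over the possible values of $g_1$,
\begin{equation*}
\nu(A) = \Pp(X_\infty \in A) = \sum_{g\in\Gamma}\Pp(g_1 = g)\,\Pp(X_\infty \in A \mid g_1 = g).
\end{equation*}
The term $\Pp(g_1=g)$ is exactly $\mu(g)$. For the conditional probability, I would use that on the event $\{g_1=g\}$ we have $X_n = g\cdot(g^{-1}X_n)$, where $g^{-1}X_n = g_2\cdots g_n$ is distributed as a $\mu$-walk started at $e$ and is independent of $g_1$. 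Passing to the limit and using that the $\Gamma$-action on the Martin compactification is by homeomorphisms, $X_\infty = g \cdot Y_\infty$ where $Y_\infty$ has law $\nu$. Hence
\begin{equation*}
\Pp(X_\infty \in A \mid g_1 = g) = \Pp(g\,Y_\infty \in A) = \Pp(Y_\infty \in g^{-1}A) = \nu(g^{-1}A).
\end{equation*}
Substituting back gives precisely $\nu(A) = \sum_{g}\mu(g)\nu(g^{-1}A) = \mu*\nu(A)$, as required.

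The only delicate point, and the step I expect to be the main (though modest) obstacle, is justifying that the limit $X_\infty$ of the shifted walk transforms equivariantly, i.e.\ that $\lim_n g\cdot(g^{-1}X_n) = g\cdot\lim_n(g^{-1}X_n)$ in the Martin compactification. This is exactly the statement that the left $\Gamma$-action extends continuously to $\overline{\Gamma}^\mu$, which holds because the Martin kernel satisfies $K(g^{-1}h, \cdot)$-type transformation rules making each group element act by a homeomorphism of the compactification. Once this continuity of the action is invoked, the computation is a routine conditioning argument, which is why the lemma is folklore.
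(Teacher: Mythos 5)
Your proof is correct and follows essentially the same route as the paper's: condition on the first increment, observe that $X_1^{-1}X_\infty=\lim_n X_1^{-1}X_n$ has law $\nu$ and is independent of $X_1$, and sum over the first step. Your extra remark about the continuity of the $\Gamma$-action on the Martin compactification just makes explicit a point the paper leaves implicit.
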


\begin{proof}
Since $\nu$ is the exit law of the random walk, we have
$$\nu(A)=P(X_\infty \in A)=\sum_{g\in \Gamma}P(X_\infty \in A |X_1=g)P(X_1=g).$$
By definition, $\mu$ is the law of $X_1$ so that
$$\nu(A)=\sum_{g\in \Gamma}P(X_1^{-1}X_\infty \in g^{-1}A|X_1=g)\mu(g).$$
Notice that $X_1^{-1}X_\infty$ is the limit of $X_1^{-1}X_n$, hence it has the same law as $X_\infty$.
Also, $X_1^{-1}X_\infty$ is independent of $X_1$.
We thus get
$$\nu(A)=\sum_{g\in \Gamma}P(X_1^{-1}X_\infty\in g^{-1}A)\mu(g)=\sum_{g\in \Gamma}\nu(g^{-1}A)\mu(g)=\mu*\nu(A).$$
This concludes the proof.
\end{proof}

\subsection{The Morse boundary has zero harmonic measure}
As we saw, if $\Gamma$ is non-elementary relatively hyperbolic with parabolic subgroups having empty Morse boundary or if $\Gamma$ is non-elementary hierarchically hyperbolic, there is a map from the Morse boundary to the Martin boundary.
Our goal is to prove Theorem~\ref{maintheoremmeasureMorse}, that is unless $\Gamma$ is hyperbolic, its Morse boundary has measure 0 with respect to the harmonic measure.

The following elementary result will be useful.
\begin{lemma}\label{countableunion}

There exists a countable collection $\mathcal{F}$ of Morse gauges such that for any $\Gamma$, $\partial_{M}\Gamma=\cup_{N\in \mathcal{F}}\partial^{N}_{M}\Gamma$.
Moreover $\partial_{M}\Gamma \times \partial_{M}\Gamma \setminus Diag=\cup_{N\in \mathcal{F}}\Omega_N$ where $\Omega_N$ is the set of couples $(x_1,x_2)\in \partial_M\Gamma\times \partial_M\Gamma$ such that there exists a bi-infinite $N$-Morse geodesic from $x_1$ to $x_2$.
\end{lemma}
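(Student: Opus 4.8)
The plan is to prove Lemma~\ref{countableunion} by exhibiting an explicit countable family of Morse gauges that is cofinal among all Morse gauges, in the sense that every Morse gauge is pointwise dominated by one in the family. Recall that a Morse gauge is a function $N\colon[1,\infty)\times[0,\infty)\to[0,\infty)$, and that an $N$-Morse geodesic is automatically $N'$-Morse whenever $N\leq N'$ pointwise; thus $\partial^N_M\Gamma\subseteq\partial^{N'}_M\Gamma$ for such a pair. So it suffices to produce a countable collection $\mathcal{F}$ with the property that for every Morse gauge $N$ there is some $N'\in\mathcal{F}$ with $N\leq N'$. Given such an $\mathcal{F}$, every equivalence class of Morse geodesic rays, being $N$-Morse for some gauge $N$, is $N'$-Morse for the dominating $N'\in\mathcal{F}$, which yields $\partial_M\Gamma=\bigcup_{N\in\mathcal{F}}\partial^N_M\Gamma$.

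The key step is the construction of $\mathcal{F}$. First I would observe that a Morse geodesic is controlled not by the full function $N$ but only by its values: without loss of generality one may restrict attention to nondecreasing gauges (replacing $N$ by $(\lambda,c)\mapsto\sup_{\lambda'\leq\lambda,\,c'\leq c}N(\lambda',c')$ only enlarges the gauge, and an $N$-Morse geodesic stays $N$-Morse under this replacement). For a nondecreasing gauge, the behaviour on all of $[1,\infty)\times[0,\infty)$ is captured up to a bounded error by its values on integer points. Concretely, I would let $\mathcal{F}$ be the set of gauges of the form $N_\phi(\lambda,c)=\phi(\lceil\lambda\rceil,\lceil c\rceil)$, where $\phi\colon\mathbb{N}\times\mathbb{N}\to\mathbb{N}$ ranges over functions that are eventually determined by finitely many integer values and take rational (indeed integer) values — the set of such step functions is countable, being a countable union over finite domains of finite-to-countable assignments. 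Then any nondecreasing Morse gauge $N$ is dominated by $N_\phi$ for the choice $\phi(m,n)=\lceil N(m,n)\rceil$ restricted suitably, and since $N$ is nondecreasing, $N(\lambda,c)\leq N(\lceil\lambda\rceil,\lceil c\rceil)\leq N_\phi(\lambda,c)$. This gives the cofinal countable family.

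For the second assertion, the same family $\mathcal{F}$ works. A pair $(x_1,x_2)$ of distinct points in $\partial_M\Gamma$ is joined by a bi-infinite Morse geodesic because the Morse boundary is a visibility space, as recalled in the introduction following \cite{Cordes}. Such a bi-infinite geodesic is $N$-Morse for some gauge $N$, hence $N'$-Morse for the dominating $N'\in\mathcal{F}$, which places $(x_1,x_2)\in\Omega_{N'}$. Conversely each $\Omega_N\subseteq\partial_M\Gamma\times\partial_M\Gamma\setminus\mathrm{Diag}$ by definition, since a bi-infinite geodesic cannot connect a point to itself. Therefore $\partial_M\Gamma\times\partial_M\Gamma\setminus\mathrm{Diag}=\bigcup_{N\in\mathcal{F}}\Omega_N$.

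The main obstacle I anticipate is purely the bookkeeping of the cofinality argument: one must be careful that the countable family genuinely dominates \emph{every} Morse gauge and not merely every nondecreasing one, which is why the reduction to nondecreasing gauges (and the observation that enlarging a gauge preserves the Morse property of a given geodesic) is the crucial preliminary. Once that reduction is in place, the discretization to integer arguments and the countability of integer-valued step functions are routine. No essential geometric input is needed beyond the visibility property already quoted for the second statement; the lemma is fundamentally a statement about the structure of the space of Morse gauges.
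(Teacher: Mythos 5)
Your argument hinges on the claim that there is a countable family $\mathcal{F}$ of Morse gauges that is cofinal under pointwise domination, i.e.\ such that every Morse gauge $N$ satisfies $N\leq N'$ for some $N'\in\mathcal{F}$. No such family exists, and this is where the proof breaks. Even after your (legitimate) reductions to nondecreasing, integer-valued gauges evaluated at integer arguments, one is left with the poset of functions $\mathbb{N}\times\mathbb{N}\to\mathbb{N}$ under pointwise domination, which has uncountable cofinality: given any countable family $\{N_k\}_{k\in\mathbb{N}}$, the function $N(\lambda,c)=1+\max_{k\leq \lambda+c}N_k(\lambda,c)$ is a gauge dominated by no $N_k$ (it exceeds $N_k$ at all points with $\lambda+c\geq k$). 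Your specific candidate family --- step functions ``eventually determined by finitely many integer values'' --- fails even more visibly, since such functions cannot dominate any unbounded gauge such as $N(\lambda,c)=\lambda+c$. The formula you give for the dominating element, $\phi(m,n)=\lceil N(m,n)\rceil$, does not belong to that family for a general $N$, so the construction is circular. The second half of your argument (visibility plus domination) is fine in structure but inherits the same gap.

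The lemma is not a formal statement about the poset of gauges; it genuinely requires geometric input about $\Gamma$, and this is exactly what the paper uses. By Proposition~\ref{ABDTheoremE} (for HHGs) and Lemma~\ref{relativelyhyperbolicMorseboundedprojections} (for the relevant relatively hyperbolic groups), a geodesic is $N$-Morse if and only if it has $D$-bounded projections, with $N$ and $D$ determining each other. Hence every Morse geodesic has $D$-bounded projections for some integer $D$, and is therefore $N(D)$-Morse for a gauge $N(D)$ depending only on the integer $D$; the countable family is $\mathcal{F}=\{N(D):D\in\mathbb{N}\}$. The point is that one does not need to dominate every abstract gauge, only to capture every Morse geodesic actually occurring in $\Gamma$, and these are parametrized (up to the relation above) by a single integer. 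The second claim is handled the same way, applying this characterization to the bi-infinite Morse geodesics provided by \cite[Proposition~3.11]{Cordes}. To repair your proof you would need to replace the cofinality argument by this bounded-projections parametrization (or some other geometric bound on which gauges are realized).
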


\begin{proof}
The Morse boundary is defined as the union of all $N$-Morse boundaries. 
According to Proposition~\ref{ABDTheoremE} and Lemma~\ref{relativelyhyperbolicMorseboundedprojections} for every Morse gauge $N$ there is an integer $D(N)>0$ such that any geodesic with $D(N)$ bounded projections is $N$-Morse. 
Furthermore any Morse geodesic has bounded projections proving the first claim.
For the second claim, according to \cite[Proposition~3.11]{Cordes}, 
$\partial_M\Gamma\times \partial_M\Gamma\setminus Diag$ is the union of $\Omega_N$ over all Morse gauges, and we can conclude similarly.
\end{proof}
Now, we prove the following result, which gives sense to $\nu(\partial_M\Gamma)$.

\begin{lemma}\label{LemmaMorseborelian}
Let $\Gamma$ be a non-elementary hierarchically hyperbolic group or a non-elementary relatively hyperbolic group whose parabolic subgroups have empty Morse boundary.
Then, the Morse boundary
$\partial_{M}\Gamma\subset \partial_{\mu}\Gamma$ is a Borel subset of the Martin boundary.
\end{lemma}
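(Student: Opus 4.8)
The plan is to exhibit $\Phi(\partial_M\Gamma)$ as a countable union of closed subsets of $\partial_\mu\Gamma$, which will immediately give that it is an $F_\sigma$ set and hence Borel. The key structural facts I would invoke are that the Martin compactification (and thus the Martin boundary $\partial_\mu\Gamma$) is compact and metrizable, so in particular Hausdorff; consequently every compact subset of $\partial_\mu\Gamma$ is closed. I would also lean on the countability provided by Lemma~\ref{countableunion}, which is what ultimately allows the reduction to a union that stays inside the Borel $\sigma$-algebra.

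First I would fix a single Morse gauge $N$ and analyze the stratum $\partial_{M,e}^N\Gamma$. Recall from \cite{Cordes} that for a fixed gauge the stratum $\partial_{M,e}^N\Gamma$ is compact; this compactness was already used at the end of the proof of Proposition~\ref{prop:topological embedding}. By Proposition~\ref{prop:topological embedding}, the restriction $\Phi_N\colon\partial_{M,e}^N\Gamma\to\partial_\mu\Gamma$ is a topological embedding, in particular continuous, so its image $\Phi_N(\partial_{M,e}^N\Gamma)$ is a compact subset of $\partial_\mu\Gamma$. Since $\partial_\mu\Gamma$ is Hausdorff, this image is closed, and therefore Borel.

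Finally I would assemble the strata. By Lemma~\ref{countableunion} there is a countable family $\mathcal{F}$ of Morse gauges with $\partial_M\Gamma=\bigcup_{N\in\mathcal{F}}\partial_{M,e}^N\Gamma$, and since $\Phi$ restricts to $\Phi_N$ on each stratum, this yields
$$\Phi(\partial_M\Gamma)=\bigcup_{N\in\mathcal{F}}\Phi_N\bigl(\partial_{M,e}^N\Gamma\bigr),$$
a countable union of closed sets, hence an $F_\sigma$ subset of the Martin boundary, and in particular Borel. The real content lies in the two ingredients feeding into the fixed-gauge step: the compactness of each stratum $\partial_{M,e}^N\Gamma$ and the embedding property of $\Phi_N$ from Proposition~\ref{prop:topological embedding}; once these are in hand the conclusion is just closure of the Borel $\sigma$-algebra under countable unions. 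I expect the only point requiring care is precisely the use of countability of $\mathcal{F}$: the full Morse boundary carries the non-metrizable direct limit topology and is a union over an uncountable family of gauges, so without the reduction to a countable cofinal subfamily the argument would not produce a Borel set.
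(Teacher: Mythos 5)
Your proposal is correct and follows essentially the same route as the paper: reduce to a countable union of $N$-Morse strata via Lemma~\ref{countableunion}, then use compactness of each $\partial_M^N\Gamma$ together with continuity of $\Phi_N$ (Proposition~\ref{prop:topological embedding}) to conclude each image is compact, hence closed and Borel. The paper's own proof is just a terser version of the same argument.
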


\begin{proof}
By Lemma \ref{countableunion} the Morse boundary can be obtained as a countable union of spaces $\partial_M^N\Gamma$.
We just need to prove that for fixed $N$, the image of $\partial_M^N\Gamma$ is a borelian subset of the Martin boundary.
This follows immediately from the fact that the map $\Phi$ from the Morse boundary to the Martin boundary is continuous and the fact that for fixed $N$, $\partial_M^N\Gamma$ is compact, according to \cite[Proposition~3.12]{Cordes}.
\end{proof}

In fact, we have another realization of the Morse boundary as a subset of a topological model for the Poisson boundary.
Recall from the introduction that a hierarchically hyperbolic group acylindrically acts on a hyperbolic space $C\mathbf{S}$ and that a relatively hyperbolic group acylindrically acts on the coned-off graph with respect to the relatively hyperbolic structure.
For any such group $\Gamma$, we will denote by $X$ the corresponding hyperbolic space on which $\Gamma$ acylindrically acts.
We endow $\Gamma\cup \partial_M\Gamma$ with the direct limit topology and we endow $\Gamma\cup \partial X$ with the usual topology coming from the Gromov product on $X$.

\begin{lemma}\label{inclusionMorseGromovboundary}\cite[Corollary~6.1,~Lemma~6.5]{AbbottBehrstockDurham}\cite[Theorem~7.6]{CashenMacKay}
Let $\Gamma$ be either a non-elementary hierarchically hyperbolic group or a non-elementary relatively hyperbolic group whose parabolic subgroups have empty Morse boundaries.
Then, there is an embedding $\Psi: \partial_M\Gamma \to \partial X$.
Moreover, a sequence $g_n$ in $\Gamma$ converge to $x\in \partial_M\Gamma$ if and only if $g_n$ converges to $\Psi(x)\in \partial X$.
\end{lemma}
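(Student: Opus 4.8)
The plan is to realize $\Psi$ through the projection to the hyperbolic space $X$ on which $\Gamma$ acts acylindrically: in the hierarchically hyperbolic case this is $\pi_{\mathbf S}\colon \Gamma\to C\mathbf S=X$, and in the relatively hyperbolic case it is the projection to the coned-off graph. The starting point is the bounded-projection criterion, Proposition~\ref{ABDTheoremE} in the hierarchically hyperbolic case and Lemma~\ref{relativelyhyperbolicMorseboundedprojections} in the relatively hyperbolic case: an $N$-Morse geodesic $\alpha$ has $D$-bounded projections, with $N$ and $D$ determining each other. By \cite[Corollary~6.1 and Lemma~6.5]{AbbottBehrstockDurham} (respectively \cite[Theorem~7.6]{CashenMacKay}), such a geodesic is sent by the projection to an unparametrized $(\lambda,c)$-quasi-geodesic in $X$, with $\lambda,c$ depending only on $N$. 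Since a quasi-geodesic ray in the hyperbolic space $X$ has a well-defined endpoint in $\partial X$, I would define $\Psi([\alpha])$ to be this endpoint; it is cleanest to phrase the whole argument as extending the identity $\mathrm{id}_\Gamma$ to a map of compactifications $\Gamma\cup\partial_M\Gamma\to \Gamma\cup\partial X$, so that the ``moreover'' statement is precisely the assertion that this extension is a topological embedding onto its image.

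First I would check that $\Psi$ is well defined and injective. Well-definedness is immediate: two $N$-Morse rays in the same class stay within bounded Hausdorff distance, and the projection is coarsely Lipschitz, so the projected quasi-geodesics remain boundedly close and hence converge to the same point of $\partial X$. For injectivity, given two distinct points $[\alpha]\neq[\alpha']$ of $\partial_M\Gamma$, Lemma~\ref{countableunion} provides a gauge $N$ and a bi-infinite $N$-Morse geodesic $\gamma$ from $[\alpha]$ to $[\alpha']$; applying the same projection estimate to $\gamma$ yields a bi-infinite unparametrized quasi-geodesic in $X$, and the two ends of a bi-infinite quasi-geodesic in a hyperbolic space converge to distinct ideal points, so $\Psi([\alpha])\neq\Psi([\alpha'])$.

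Next I would establish the convergence characterization, which is the heart of the statement and from which continuity follows. Fix a Morse gauge $N$. If $g_n\to x$ in $\Gamma\cup\partial_M\Gamma$, then the $g_n$ eventually lie within a bounded distance of an $N$-Morse geodesic $\alpha$ representing $x$ and travel to infinity along it; pushing forward by the projection and using that the image of $\alpha$ is a uniform quasi-geodesic, the projected points escape to infinity along it and hence converge to $\Psi(x)$ in $\partial X$. For the converse I would run the projection estimate in reverse: a sequence converging to $\Psi(x)$ in $\partial X$ and tracking the projected geodesic makes definite progress along it, which by \cite[Lemma~6.5]{AbbottBehrstockDurham} lifts to definite progress along $\alpha$, forcing convergence to $x$ in the Morse boundary. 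Continuity of each restriction $\Psi_N\colon\partial_M^N\Gamma\to\partial X$ then follows, since convergence in $\partial_M^N\Gamma$ is by uniform-on-compacts convergence of representative rays (all of gauge $N$, hence with projection constants uniform in the sequence); because $\partial_M^N\Gamma$ is compact by \cite[Proposition~3.12]{Cordes} and $\partial X$ is Hausdorff, the closed map lemma upgrades $\Psi_N$ to a topological embedding, and the direct limit topology makes $\Psi$ itself continuous.

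The main obstacle is the projection step, and it is precisely what the cited works supply: one needs not merely that the projection to $X$ is coarsely Lipschitz, but that it sends $N$-Morse (equivalently $D$-bounded-projection) geodesics to uniform quasi-geodesics with constants controlled by $N$, and that this estimate is compatible with the two competing notions of escape to infinity, the intrinsic Morse-boundary convergence and convergence in $\partial X$. In the hierarchically hyperbolic setting this is \cite[Corollary~6.1 and Lemma~6.5]{AbbottBehrstockDurham}, which rely on the largest-action structure so that the bounded projections are controlled away from the maximal domain; in the relatively hyperbolic setting it is \cite[Theorem~7.6]{CashenMacKay}, with the coned-off graph playing the role of $X$ and Lemma~\ref{relativelyhyperbolicMorseboundedprojections} furnishing the bounded-projection criterion. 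The remaining work is to assemble these inputs and to carry out the elementary hyperbolic-geometry arguments (distinct endpoints of bi-infinite quasi-geodesics, stability of quasi-geodesics, thin Morse triangles as in \cite[Lemma~2.3]{CharneyCordesMurray}) uniformly in the gauge $N$.
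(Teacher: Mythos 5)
The paper does not actually prove this lemma: it is stated as a direct citation of \cite[Corollary~6.1, Lemma~6.5]{AbbottBehrstockDurham} and \cite[Theorem~7.6]{CashenMacKay}, so there is no in-paper argument to compare against. Your outline is a faithful reconstruction of how those cited results are meant to combine --- defining $\Psi$ via the projection to $X$, using the bounded-projection characterization of Morse geodesics (Proposition~\ref{ABDTheoremE}, Lemma~\ref{relativelyhyperbolicMorseboundedprojections}) to get uniform unparametrized quasi-geodesics, and reading off well-definedness, injectivity and the convergence statement from standard hyperbolic geometry --- and the one genuinely delicate step you correctly isolate (the converse direction of the convergence claim, where a sequence converging in $\partial X$ must be shown to converge in the Morse topology) is exactly what the cited lemmas are invoked to supply.
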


According to \cite[Theorem~1.1, Theorem~1.5]{MaherTiozzo}, the random walk almost surely converges to a point in $\partial X$ and $\partial X$ endowed with the exit measure is a realization of the Poisson boundary.
In the following, we will denote by $\nu$ the harmonic measure on the Martin boundary $\partial_\mu\Gamma$ and by $\nu_X$ the harmonic measure on $\partial X$ to avoid confusion.
We now prove that the Morse boundary has full measure with respect to one harmonic measure if and only if it has full measure with respect to the other.

\begin{proposition}\label{propequivalencePoissonboundaries}
Let $\Gamma$ be either a non-elementary hierarchically hyperbolic group or a non-elementary relatively hyperbolic group whose parabolic subgroups have empty Morse boundaries.
Then, $\nu(\Phi(\partial_M\Gamma))=1$ if and only if $\nu_X(\Psi(\partial_M\Gamma))=1$.
\end{proposition}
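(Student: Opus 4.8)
The plan is to exhibit both $(\partial_\mu\Gamma,\nu)$ and $(\partial X,\nu_X)$ as models of the Poisson boundary and to transport one statement to the other through the canonical isomorphism between them. By the results recalled in Section~\ref{SectionPoissonMartin} together with \cite{MaherTiozzo}, for almost every sample path the walk $X_n$ converges simultaneously to a point $\xi(\omega)\in\partial_\mu\Gamma$ (with $\xi_*\mathbb{P}=\nu$) and to a point $\eta(\omega)\in\partial X$ (with $\eta_*\mathbb{P}=\nu_X$); this yields a $\Gamma$-equivariant measure-preserving isomorphism $\Theta$ characterized by $\Theta(\xi(\omega))=\eta(\omega)$ almost surely. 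Since $\Phi$ and $\Psi$ are equivariant and the Morse boundary is $\Gamma$-invariant, $\Phi(\partial_M\Gamma)$ and $\Psi(\partial_M\Gamma)$ are $\Gamma$-invariant Borel sets (the first by Lemma~\ref{LemmaMorseborelian}, the second similarly), so by ergodicity of $\nu$ each of the two quantities in the statement lies in $\{0,1\}$. It therefore suffices to prove that the events $\{\eta(\omega)\in\Psi(\partial_M\Gamma)\}$ and $\{\xi(\omega)\in\Phi(\partial_M\Gamma)\}$ agree up to a null set, which gives $\nu(\Phi(\partial_M\Gamma))=\nu_X(\Psi(\partial_M\Gamma))$.

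One inclusion essentially reruns Lemma~\ref{lemma2MorseinsideMartin}. Suppose $\eta(\omega)=\Psi(x)$ with $x\in\partial_M\Gamma$. By Lemma~\ref{inclusionMorseGromovboundary} this is equivalent to $X_n\to x$ in $\Gamma\cup\partial_M\Gamma$; choosing a gauge $N$ with $x\in\partial_M^N\Gamma$ and a representing ray $\alpha$, the geodesics $[e,X_n]$ are then eventually uniformly Morse and fellow-travel $\alpha$ over growing initial segments (standard properties of the Morse boundary topology, cf.\ \cite{Cordes}). Hence each fixed $g\in\alpha$ lies near $[e,X_n]$ for all large $n$, and Theorem~\ref{maintheoremMorseAncona} gives $G(e,X_n)\asymp G(e,g)G(g,X_n)$; letting $n\to\infty$ yields $K_{\xi(\omega)}(g)\asymp 1/G(e,g)\asymp K_{\Phi(x)}(g)$ along $\alpha$. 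As $\xi(\omega)$ is minimal for almost every $\omega$ and $\Phi(x)$ is minimal by Lemma~\ref{lemma1MorseinsideMartin}, \cite[Lemma~1.7]{Anconapotentiel} forces $\xi(\omega)=\Phi(x)$. This shows $\{\eta\in\Psi(\partial_M\Gamma)\}\subset\{\xi\in\Phi(\partial_M\Gamma)\}$ up to a null set, i.e.\ $\nu_X(\Psi(\partial_M\Gamma))\le\nu(\Phi(\partial_M\Gamma))$, and already settles the implication $\nu_X=1\Rightarrow\nu=1$ (in particular the hyperbolic case, where $\Psi(\partial_M\Gamma)=\partial X$).

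The main obstacle is the reverse inclusion: on the event $\{\xi(\omega)=\Phi(x)\}$ one must show $\eta(\omega)=\Psi(x)$, i.e.\ that convergence to an image point in the Martin boundary forces geometric convergence to the associated Morse direction in $X$. By injectivity of $\Phi$ and the inclusion above, the only scenario to exclude is that $X_n\to\Phi(x)$ in the Martin boundary while $X_n$ converges in $\partial X$ to a non-Morse direction. Concretely, one has to upgrade the additivity $G(e,X_n)\asymp G(e,g)G(g,X_n)$ — valid, for each deep $g\in\alpha$, on $\{\xi=\Phi(x)\}$ — to genuine alignment of $[e,X_n]$ with $\alpha$ in $X$, a converse to the Ancona inequalities that is not contained in Theorem~\ref{maintheoremMorseAncona}. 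I would establish this probabilistically via the Doob transform: for $\nu$-almost every $\zeta=\Phi(x)$, analyze the $K_\zeta$-process and show, using $K_{\Phi(x)}\asymp 1/G(e,\cdot)$ along $\alpha$ together with Proposition~\ref{epsilonAnconaMorsegeodesic}, that it makes positive progress along $\alpha$ and hence converges to $\Psi(x)$ in $\partial X$; disintegrating $\mathbb{P}$ over the exit point $\xi$ then transfers this to almost every path. For relatively hyperbolic groups this step can alternatively be bypassed using \cite{GGPY}, whose continuous equivariant map $\pi$ from the Martin boundary onto the Bowditch boundary satisfies $\pi\circ\Phi=\Psi$ by continuity and immediately yields $\Phi(\partial_M\Gamma)\subset\Theta^{-1}(\Psi(\partial_M\Gamma))$.
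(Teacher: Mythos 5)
Your first direction ($\nu_X(\Psi(\partial_M\Gamma))=1\Rightarrow\nu(\Phi(\partial_M\Gamma))=1$) is fine and matches the paper's: convergence of $X_n$ in $\partial X$ to $\Psi(x)$ gives convergence to $x$ in $\Gamma\cup\partial_M\Gamma$ by Lemma~\ref{inclusionMorseGromovboundary}, and then the Ancona estimates plus minimality identify the Martin limit with $\Phi(x)$. The problem is the reverse direction, which you correctly flag as the main obstacle but then do not actually prove. Your plan is to show the pathwise statement ``$X_n\to\Phi(x)$ in $\partial_\mu\Gamma$ forces $X_n\to\Psi(x)$ in $\partial X$'' via a Doob transform, asserting that the $K_\zeta$-conditioned walk makes positive progress along $\alpha$ and converges in $\partial X$ to $\Psi(x)$. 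That claim is exactly the converse-Ancona / conditioned-convergence statement you yourself note is not contained in Theorem~\ref{maintheoremMorseAncona}, and nothing in the paper (nor in your sketch) supplies it: Proposition~\ref{epsilonAnconaMorsegeodesic} controls Green functions of points already known to lie on a Morse geodesic, and gives no a priori control on where a trajectory conditioned to exit at $\Phi(x)$ goes in $X$. As written, this step is a statement of intent, not a proof, so the argument has a genuine gap.

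The paper avoids the pathwise question entirely. Assuming $\nu(\Phi(\partial_M\Gamma))=1$, it pushes $\nu$ forward by the measurable map $\Psi\circ\Phi^{-1}$ (measurability of $\Phi^{-1}$ is Lemma~\ref{Phi-1measurable}, using that $\Phi$ restricted to each compact $\partial_M^N\Gamma$ is an embedding and the Morse boundary is a countable union of these). The resulting measure $\tilde\nu_X$ on $\partial X$ is $\mu$-stationary because $\nu$ is stationary and both $\Phi$ and $\Psi$ are $\Gamma$-equivariant, and it gives full mass to $\Psi(\partial_M\Gamma)$ by construction. Uniqueness of the stationary measure on $\partial X$ from \cite[Theorem~1.1]{MaherTiozzo} then forces $\tilde\nu_X=\nu_X$, hence $\nu_X(\Psi(\partial_M\Gamma))=1$. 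If you want to salvage your approach, you should replace the Doob-transform step by this soft uniqueness argument; otherwise you would need to actually establish convergence of the conditioned walk in $X$, which is a substantially harder result.
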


\begin{proof}
Assume first that $\nu_X(\Psi(\partial_M\Gamma))=1$.
Then, almost surely, the random walk $X_n$ converges to a point $\Psi(x_\infty)$.
According to Lemma~\ref{inclusionMorseGromovboundary}, $X_n$ almost surely converges to $x_\infty$ with respect to the topology on $\Gamma \cup \partial_M\Gamma$.
By construction of the map $\Phi$, this implies that $X_n$ almost surely converges to $\Phi(x_\infty)$ in the Martin boundary.
In particular, $\nu(\Phi(\partial_M\Gamma))=1$.

Proving the converse is more difficult, because we do not know if converging to $\Phi(x_\infty)$ in the Martin boundary implies converging to $x_\infty$ in the Morse boundary, so we have to find another strategy.
Since the map $\Phi$ is one-to-one, we can define the inverse map $\Phi^{-1}:\Phi(\partial_M\Gamma)\to \partial_M\Gamma$.
It is not clear if this inverse map is continuous, but it is measurable as we now prove.

\begin{lemma}\label{Phi-1measurable}
The map $\Phi^{-1}:\Phi(\partial_M\Gamma)\to \partial_M\Gamma$ is measurable.
\end{lemma}

\begin{proof}
By Lemma \ref{countableunion} the Morse boundary is a countable union of $N$-Morse boundaries $\partial_M^N\Gamma$.
Let $A$ be a borelian subset of $\partial_M\Gamma$.
Since $\partial_M^N\Gamma$ is closed in $\partial_M\Gamma$, $\partial_M^N\Gamma\cap A$ also is measurable.
Moreover, when restricted to $\partial_M^N\Gamma$ which is compact, the map $\Phi$ is an embedding, so that $\Phi(\partial_M^N\Gamma \cap A)$ is a borelian subset of the Martin boundary.
Thus,
$$(\Phi^{-1})^{-1}(A)=\bigcup_N\Phi(\partial_M^N\Gamma\cap A)$$
is measurable, which proves the lemma.
\end{proof}

Since $\nu(\Phi(\partial_M\Gamma))=1$, we can see $\nu$ as a measure on $\Phi(\partial_M\Gamma)$.
Consider the pushforward measure $\tilde{\nu}_X=\Psi \circ \Phi^{-1}_*\nu$.
By definition, this is a probability measure on $\partial X$ such that $\tilde{\nu}_X(\Psi (\partial_M\Gamma))=1$.
Recall that $\nu$ is stationary on $\partial_\mu\Gamma$.
Notice that both maps $\Phi$ and $\Psi$ are $\Gamma$-equivariant, hence $\tilde{\nu}_X$ is stationary on $\partial X$.
According to \cite[Theorem~1.1]{MaherTiozzo}, $\nu_X$ is the only stationary probability measure on $\partial X$, so
$\tilde{\nu}_X=\nu_X$.
This proves that $\nu_X(\Psi(\partial_M\Gamma))=1$, which concludes the proof.
\end{proof}

\subsection{Double ergodicity of the harmonic measure on the Martin boundary}
We prove here Theorem~\ref{maintheoremmeasureMorse} in the particular case where the measure $\mu$ is symmetric.
In general, letting $\mu$ be a probability measure on $\Gamma$, denote by $\check{\mu}$ the reflected measure, that is, $\check{\mu}(\gamma)=\mu(\gamma^{-1})$.
This reflected measure also gives rise to a random walk, hence to a Martin boundary $\partial_{\check{\mu}}\Gamma$ and a harmonic measure $\check{\nu}$.
We call $\check{\nu}$ the reflected harmonic measure.
We have the following.

\begin{theorem}\label{ergodicityPoissonboundary}\cite[Theorem~6.3]{KaimanovichPoissonhyperbolic}
The product measure $\nu\otimes \check{\nu}$ is ergodic for the diagonal action of $\Gamma$ on $\partial_{\mu}\Gamma\times \partial_{\check{\mu}}\Gamma$.
\end{theorem}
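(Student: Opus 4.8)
The plan is to follow Kaimanovich's double-path-space argument: realize the product $(\partial_{\mu}\Gamma \times \partial_{\check{\mu}}\Gamma,\ \nu\otimes\check{\nu})$ as a measurable factor of a Bernoulli shift and deduce ergodicity of the diagonal $\Gamma$-action from ergodicity of that shift. Since $(\partial_{\mu}\Gamma,\nu)$ is a model for the Poisson boundary, I may work with the Poisson-boundary realizations of both $\nu$ and $\check{\nu}$. Concretely, I would introduce the space of bi-infinite increment sequences $\Omega = \Gamma^{\mathbb{Z}}$ equipped with the Bernoulli measure $\mathbf{P} = \mu^{\otimes \mathbb{Z}}$, and for $\mathbf{g}=(g_n)_{n\in\mathbb{Z}}\in\Omega$ define the two-sided trajectory $(x_n)_{n\in\mathbb{Z}}$ by $x_0 = e$ and $x_k^{-1}x_{k+1}=g_{k+1}$, so that $x_n = g_1\cdots g_n$ for $n\ge 0$ and $x_{-n}=g_0^{-1}\cdots g_{-n+1}^{-1}$ for $n\ge 1$.

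First I would set up the two boundary maps. The forward trajectory $(x_n)_{n\ge 0}$ is an ordinary $\mu$-walk, so by the convergence theorem recalled in Section~\ref{SectionPoissonMartin} it almost surely converges to a point $\mathrm{bnd}_+(\mathbf{g})\in\partial_{\mu}\Gamma$ of law $\nu$; crucially $\mathrm{bnd}_+$ depends only on the future increments $(g_n)_{n\ge 1}$. The backward trajectory $(x_{-n})_{n\ge 0}$ has increments $x_{-n}^{-1}x_{-(n+1)}=g_{-n}^{-1}$, which are i.i.d.\ with law $\check{\mu}$, so it is a $\check{\mu}$-walk and almost surely converges to a point $\mathrm{bnd}_-(\mathbf{g})\in\partial_{\check{\mu}}\Gamma$ of law $\check{\nu}$, depending only on the past increments $(g_n)_{n\le 0}$. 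Since the blocks $(g_n)_{n\ge 1}$ and $(g_n)_{n\le 0}$ are independent under $\mathbf{P}$, the two boundary points are independent, and hence the joint map $\mathrm{bnd}=(\mathrm{bnd}_-,\mathrm{bnd}_+)$ pushes $\mathbf{P}$ forward to exactly the product $\check{\nu}\otimes\nu$ on $\partial_{\check{\mu}}\Gamma\times\partial_{\mu}\Gamma$.

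The ergodicity then comes from the shift. Let $S\colon\Omega\to\Omega$, $(S\mathbf{g})_n=g_{n+1}$, be the Bernoulli shift, which is measure preserving and ergodic. A direct computation shows that the $S$-shifted trajectory equals the original one re-indexed by one step and left-translated by $g_1^{-1}$, namely $x'_n=g_1^{-1}x_{n+1}$; since re-indexing does not move the endpoints at $\pm\infty$, one obtains the equivariance $\mathrm{bnd}(S\mathbf{g})=g_1^{-1}\cdot\mathrm{bnd}(\mathbf{g})$ for the diagonal $\Gamma$-action. Consequently, if $A\subseteq \partial_{\check{\mu}}\Gamma\times\partial_{\mu}\Gamma$ is diagonally $\Gamma$-invariant, then $\mathrm{bnd}^{-1}(A)$ is $S$-invariant, hence $\mathbf{P}$-null or $\mathbf{P}$-conull by ergodicity of $S$, whence $(\check{\nu}\otimes\nu)(A)\in\{0,1\}$. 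This is exactly the asserted ergodicity.

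The step that requires the most care --- and is the real content hidden in the citation to \cite[Theorem~6.3]{KaimanovichPoissonhyperbolic} --- is the identification of the pushforward measure in the second paragraph: one must know that the forward boundary point is measurable with respect to the future increments alone (equivalently, that $(\partial_{\mu}\Gamma,\nu)$ is genuinely the exit boundary of the one-sided walk, which is the convergence statement recalled in Section~\ref{SectionPoissonMartin}), and symmetrically for $\check{\mu}$. Given this, the independence of past and future under the Bernoulli measure yields the product structure for free, and the remaining equivariance computation is routine. The only mild subtlety is to run the argument modulo null sets throughout, using that the $\Gamma$-invariance of $A$ need only hold $\check{\nu}\otimes\nu$-almost everywhere.
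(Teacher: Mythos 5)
Your argument is correct and is essentially the proof of the cited result: the paper itself gives no proof of this theorem, deferring entirely to \cite[Theorem~6.3]{KaimanovichPoissonhyperbolic}, and your bilateral path-space reconstruction (forward/backward boundary maps on $\Gamma^{\mathbb{Z}}$, independence of past and future giving the product measure, and transfer of ergodicity from the Bernoulli shift via the equivariance $\mathrm{bnd}(S\mathbf{g})=g_1^{-1}\cdot\mathrm{bnd}(\mathbf{g})$) is exactly Kaimanovich's argument. The only point worth making explicit is that the support of $\check{\mu}$ also generates $\Gamma$ as a semigroup, so the convergence theorem recalled in Section~\ref{SectionPoissonMartin} applies to the backward walk as well.
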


Let $A$ be a $\Gamma$-invariant borelian subset of the Martin boundary $\partial_\mu\Gamma$.
Then, $A\times \partial_\mu\Gamma$ is also invariant and so $\nu\otimes \check{\nu}(A\times \partial_\mu\Gamma)$ is 0 or 1.
In particular, the harmonic measure $\nu$ is ergodic for the group action on $\partial_{\mu}\Gamma$.
Similarly, $\check{\nu}$ is ergodic.

The group $\Gamma$ acts on its Morse boundary, sending a Morse geodesic ray to another Morse geodesic ray.
By construction, the map $\Phi:\partial_M\Gamma \rightarrow\partial_{\mu}\Gamma$ is equivariant, so that the image of the Morse boundary is $\Gamma$-invariant inside the Martin boundary.
By ergodicity, we get
$\nu(\partial_{M}\Gamma) = 0 \text{ or } 1$
and
$\nu\otimes \check{\nu}(\partial_{M}\Gamma) = 0 \text{ or } 1$.
Recall that we want to prove that $\nu(\partial_M\Gamma)=0$ unless the group is hyperbolic.

\begin{proposition}
Let $\Gamma$ be a non-elementary hierarchically hyperbolic group or a non-elementary relatively hyperbolic group whose parabolic subgroups have empty Morse boundary.
Let $\mu$ be a probability measure on $\Gamma$ whose finite support generates $\Gamma$ as a semi-group.
Let $\nu$ be the corresponding harmonic measure on the Martin boundary and $\check{\nu}$ be the reflected harmonic measure.
If $\Gamma$ is not hyperbolic, then $\nu\otimes \check{\nu}(\partial_M\Gamma\times \partial_M\Gamma)=0$.
\end{proposition}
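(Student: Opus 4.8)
The plan is to argue by contradiction: assuming $\Gamma$ is not hyperbolic while $\nu\otimes\check\nu(\partial_M\Gamma\times\partial_M\Gamma)\neq 0$, I will produce a single Morse gauge that captures the entire Morse boundary, forcing $\partial_M\Gamma$ to be compact and hence $\Gamma$ to be hyperbolic. Here $\partial_M\Gamma\times\partial_M\Gamma$ is viewed inside $\partial_\mu\Gamma\times\partial_{\check\mu}\Gamma$ via $\Phi$ on the first factor and via the analogous map $\check\Phi$ attached to the reflected walk $\check\mu$ on the second; this subset is $\Gamma$-invariant and Borel, so by Theorem~\ref{ergodicityPoissonboundary} (as already recorded before the statement) its measure is $0$ or $1$, and I assume it is $1$.

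First I would localize to a single gauge. The diagonal $\{(x,x)\}$ is carried to the graph of a measurable bijection, hence has $\nu\otimes\check\nu$-measure $0$ because $\check\nu$ is non-atomic; so $\nu\otimes\check\nu$ gives full measure to $\partial_M\Gamma\times\partial_M\Gamma\setminus\mathrm{Diag}=\bigcup_{N\in\mathcal F}\Omega_N$ by Lemma~\ref{countableunion}. Each $\Omega_N$ is $\Gamma$-invariant, since a group element sends a bi-infinite $N$-Morse geodesic to another one (it acts by isometries and isometries preserve the Morse gauge), and it is Borel by the same compact-embedding argument used in Lemmas~\ref{LemmaMorseborelian} and~\ref{Phi-1measurable}. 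Countable additivity then gives a single $N$ with $\nu\otimes\check\nu(\Omega_N)>0$, and a second application of ergodicity upgrades this to $\nu\otimes\check\nu(\Omega_N)=1$. By Fubini, for $\nu$-almost every $x_1$ there is a $\check\nu$-full set of $x_2$ joined to $x_1$ by a bi-infinite $N$-Morse geodesic; such an $x_1$ is an endpoint of $N$-Morse rays, hence lies in $\partial_M^{N'}\Gamma$ for a gauge $N'$ depending only on $N$. Thus $\nu(\Phi(\partial_M^{N'}\Gamma))=1$, with $\Phi(\partial_M^{N'}\Gamma)$ compact and $\Gamma$-invariant.

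I would then transport this to the hyperbolic space $X$ exactly as in Proposition~\ref{propequivalencePoissonboundaries}: the pushforward $(\Psi\circ\Phi^{-1})_*\nu$ is a $\mu$-stationary probability measure on $\partial X$ concentrated on the compact set $\Psi(\partial_M^{N'}\Gamma)$, so by the uniqueness of the stationary measure \cite[Theorem~1.1]{MaherTiozzo} it equals $\nu_X$, giving $\nu_X(\Psi(\partial_M^{N'}\Gamma))=1$. The decisive step — and the one I expect to be the main obstacle — is converting this measure-theoretic statement into a topological one. Because $\Gamma$ acts coboundedly on $X$ (the orbit maps to $C\mathbf{S}$ and to the coned-off graph are coarsely surjective), its limit set is all of $\partial X$ and, by \cite{MaherTiozzo}, $\nu_X$ has full support $\partial X$. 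Since $\Psi(\partial_M^{N'}\Gamma)$ is closed (compact) and of full $\nu_X$-measure, it contains $\mathrm{supp}(\nu_X)=\partial X$, so $\Psi(\partial_M^{N'}\Gamma)=\partial X$. As $\Psi$ is injective and $\partial_M^{N'}\Gamma\subseteq\partial_M\Gamma$, this forces $\partial_M^{N'}\Gamma=\partial_M\Gamma$.

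Finally, the whole Morse boundary would then equal the single stratum $\partial_M^{N'}\Gamma$, which is compact by \cite[Proposition~3.12]{Cordes}; by \cite{Murray} a group with compact Morse boundary is hyperbolic, contradicting our assumption and yielding $\nu\otimes\check\nu(\partial_M\Gamma\times\partial_M\Gamma)=0$. The delicate points to check carefully are the full support of $\nu_X$ on $\partial X$ (coboundedness of the two actions, together with the non-atomicity of $\check\nu$ used for the diagonal), and the bookkeeping of the gauge change $N\rightsquigarrow N'$ when passing from bi-infinite geodesics to their individual endpoints.
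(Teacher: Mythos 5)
Your overall skeleton (ergodicity of $\nu\otimes\check\nu$, reduction to the $\Gamma$-invariant sets $\Omega_N$, non-atomicity to kill the diagonal, full support of $\nu_X$, Murray's compactness criterion) is sound, but there is a genuine gap at the pivotal step: the claim that if $(x_1,x_2)\in\Omega_N$ then $x_1\in\partial_M^{N'}\Gamma$ for a gauge $N'$ depending only on $N$. Membership in $\Omega_N$ provides a bi-infinite $N$-Morse geodesic $\gamma$ joining $x_1$ to $x_2$, and its sub-rays are indeed $N$-Morse, but they are based at points of $\gamma$, not at $e$. To place $x_1$ in a stratum $\partial_{M,e}^{N'}\Gamma$ one must produce an $N'$-Morse ray from $e$ asymptotic to $\gamma$, and by \cite[Lemma~2.8]{Cordes} the resulting gauge depends on $N$ \emph{and} on $d(e,\gamma)$, which is unbounded over $\Omega_N$: since $\Omega_N$ is $\Gamma$-invariant, translating one fixed $N$-Morse bi-infinite geodesic far from the basepoint produces pairs in $\Omega_N$ whose endpoints, seen from $e$, require arbitrarily large gauges (concretely, in $\Z^2\ast\Z$ the ray from $e$ to $g_nx$ must first cross a flat of length $\approx|g_n|$, which forces the gauge to grow). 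So from $\nu\otimes\check\nu(\Omega_N)=1$ you only obtain $\nu\bigl(\bigcup_k\partial_M^{N_k}\Gamma\bigr)=1$ for a sequence of gauges $N_k$ depending on $N$ and $k=d(e,\gamma)$ --- which is no more information than $\nu(\partial_M\Gamma)=1$, already known. Countable additivity then gives only a single compact stratum of \emph{positive} measure; the individual strata are not $\Gamma$-invariant, so ergodicity cannot upgrade this to full measure, and a compact set of positive (rather than full) $\nu_X$-measure need not contain $\mathrm{supp}(\nu_X)=\partial X$. The final contradiction therefore does not materialize.

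The paper closes the argument in the opposite direction: rather than showing $\nu\otimes\check\nu(\Omega_N)=1$ forces compactness of the Morse boundary, it shows directly that $\nu\otimes\check\nu(\Omega_N)<1$ for every $N$ and then invokes ergodicity of $\nu\otimes\check\nu$ to get $\nu\otimes\check\nu(\Omega_N)=0$. Non-hyperbolicity enters by producing an unbounded subspace $Y$ (an infinite parabolic subgroup, or a non-maximal domain $CU$), and the sets $\partial_M^{g}\Gamma$ of Morse rays projecting near $g\in Y$ are shown to have positive $\nu$- and $\check\nu$-measure for every $g$; choosing $g_1,g_2\in Y$ with $d(g_1,g_2)$ larger than the bounded-projection constant $D(N)$ from Proposition~\ref{ABDTheoremE} and Lemma~\ref{relativelyhyperbolicMorseboundedprojections} yields a positive-measure rectangle $\partial_M^{g_1}\Gamma\times\partial_M^{g_2}\Gamma$ disjoint from $\Omega_N$. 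If you want to rescue your strategy, you would need an additional idea replacing the single-gauge claim; as written, the step does not hold.
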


\begin{proof}
We argue by contradiction, so we assume in the following that
\begin{enumerate}
\item $\Gamma$ is non-elementary relatively hyperbolic with parabolic subgroups having empty Morse boundary or non-elementary hierarchically hyperbolic,
\item $\Gamma$ is not hyperbolic,
\item $\nu\otimes \check{\nu}(\partial_M\Gamma\times \partial_M\Gamma)\neq 0$.
\end{enumerate}

For a fixed Morse gauge $N$, let $\Omega_N$ be the set of couples $(x_1,x_2)\in \partial_M\Gamma\times \partial_M\Gamma$ such that there exists a bi-infinite $N$-Morse geodesic from $x_1$ to $x_2$.
By Lemma \ref{countableunion}
$$\partial_M\Gamma\times \partial_M\Gamma \setminus \mathrm{Diag} =\bigcup_{N}\Omega_N.$$
Moreover, one can choose a countable number of Morse gauges in this union, as in the proof of Lemma~\ref{LemmaMorseborelian}.
Thus, to find a contradiction, it is sufficient to prove that for every $N$, $\nu\otimes \check{\nu}(\Omega_N)=0$.

Since we assume that $\nu\otimes \check{\nu}(\partial_M\Gamma\times \partial_M\Gamma)\neq 0$, in particular, $\nu(\partial_M\Gamma)\neq 0$ and $\check{\nu}(\partial_M\Gamma)\neq 0$.
Define $Y\subset \Gamma$ to be either an infinite parabolic subgroup if $\Gamma$ is assumed to be non-elementary relatively hyperbolic or one of the hyperbolic spaces $CU$, $U\in \mathfrak{S}\setminus S$ if $\Gamma$ is assumed to be non-elementary hierarchically hyperbolic.
For every $g\in Y$, let
$\partial_M^{g}\Gamma$ be the set of Morse geodesic rays $x\in \partial_M\Gamma$ which project within distance $D_0$ of $g$, where $D_0$ is a fixed constant.
Projection of a Morse geodesic ray on $Y$ is well defined up to a bounded distance, so if $D_0$ is chosen large enough, then $\partial_M^{g}\Gamma$ is well defined.
Then,
$$\partial_M\Gamma=\bigcup_{g\in Y}\partial_M^{g}\Gamma,$$
so that there exists $g_0$ such that $\nu(\partial_M^{g_0}\Gamma)\neq 0$.

Fix $g\in Y$.
For any Morse geodesic ray $\alpha$ representing some $x\in \partial_M^{g_0}\Gamma$, the translated geodesic $g\alpha$ represents $g\cdot x\in \partial_M\Gamma$.
Moreover, $g\cdot x$ projects on $Y$ within a uniformly bounded distance of $gg_0$, so up to enlarging $D_0$ (not depending on $g$), $g\cdot x\in \partial_M^{gg_0}\Gamma$.
Thus, $g\cdot \partial_M^{g_0}\Gamma\subset \partial_M^{gg_0}$.
Recall that the random walk almost surely converges to a point $X_{\infty}$ in the Martin boundary and that $\nu$ is the law of $X_{\infty}$, so that $P(X_\infty\in \partial_M^{g_0}\Gamma)>0$.
Translating everything by $g$, we get
$P_g(X_{\infty}\in \partial_M^{gg_0}\Gamma)>0$,
where $P_g$ denotes the probability measure on the set of sample paths for the random walk starting at $g$.

Since the random walk generates $\Gamma$ as a semi-group, there exists $n_g$ such that $P(X_{n_g}=g)>0$.
Concatenating a trajectory of the random walk from $e$ to $g$ and an infinite trajectory starting at $g$ yields an infinite trajectory starting at $e$.
We thus get $P(X_{\infty}\in \partial_M^{gg_0}\Gamma)>0$ and this holds for every $g$.
In particular, applying this for $gg_0^{-1}$, we get
$\nu(\partial_M^{g}\Gamma)>0$.
Similarly, for every $g$,
$\check{\nu}(\partial_M^g\Gamma)>0$, so that for every $g_1,g_2$,
$\nu\otimes \check{\nu}(\partial_M^{g_1}\Gamma\times \partial_M^{g_2}\Gamma)>0$.

Recall that we want to prove that for every $N$, $\nu\otimes \check{\nu}(\Omega_N)=0$.
Fix a Morse gauge $N$.
Then there exists $D$ such that every $N$-Morse geodesic has $D$-bounded projections.
Choose two points $g_1,g_2$ on $Y$ such that $d(g_1,g_2)$ is bigger than $D$.
For every $(x_1,x_2)\in \partial_M^{g_1}\Gamma\times \partial_M^{g_2}\Gamma$, any Morse geodesic from $x_1$ to $x_2$ cannot have $D$-bounded projections, so $(x_1,x_2)\notin \Omega_N$.
This proves that $\partial_M^{g_1}\Gamma\times \partial_M^{g_2}\Gamma\subset \Omega_N^c$.
In particular,
$\nu\otimes \check{\nu}(\Omega_N)<1$.
Finally, notice that $\Omega_N$ is $\Gamma$-invariant.
Since $\nu\otimes \check{\nu}$ is ergodic, we necessarily have
$\nu\otimes \check{\nu}(\Omega_N)=0$.
This concludes the proof.
\end{proof}

Assuming that $\mu$ is symmetric, $\mu=\check{\mu}$, so that $\nu=\check{\nu}$.
This proves Theorem~\ref{maintheoremmeasureMorse} for symmetric measures $\mu$.

\subsection{Another proof of Theorem~\ref{maintheoremmeasureMorse}}
We do not assume anymore that $\mu$ is symmetric.
We again argue by contradiction and consider a group $\Gamma$ such that
\begin{enumerate}
\item $\Gamma$ is non-elementary relatively hyperbolic with parabolic subgroups having empty Morse boundary or non-elementary hierarchically hyperbolic,
\item $\Gamma$ is not hyperbolic,
\item $\nu(\partial_M\Gamma)\neq 0$.
\end{enumerate}
By ergodicity, $\nu(\partial_M\Gamma)=1$ and so
according to Proposition~\ref{propequivalencePoissonboundaries}, we also have $\nu_X(\partial_M\Gamma)>0$ (notice that we did not need to use this proposition in our first proof).
Since $\partial_M\Gamma$ can be described as a countable union of $N$-Morse boundaries, there exists $\eta>0$ and there exists a Morse gauge $N$ such that
$\nu_X(\partial_M^N\Gamma)\geq \eta$.
In particular, the random walk $X_n$ converges to a $N$-Morse point $X_\infty$ in $\partial X$ with probability at least $\eta$.
Recall the following terminology from \cite{MathieuSisto}.

\begin{definition}
Let $\Gamma$ be a finitely generated group acylindrically acting on a hyperbolic space $X$.
Let $(g_0,g_1,...,g_n,...)$ be a sequence (finite or infinite) of points of $\Gamma$.
We say that the sequence is tight around $g_k$ at scale $l$ (with respect to a constant $C$) if for every $k_1\leq k\leq k_2$ with $k_2-k_1\geq l$, we have
\begin{enumerate}
\item $d_X(g_{k_2},g_{k_1})\geq (k_2-k_1)/C$,
\item the length of the path $(g_{k_1},...,g_{k_2})$ in $\Gamma$ is at most $C(k_2-k_1)$
\item $d_{\Gamma}(g_{k'},g_{k'+1})\leq \max (l,|k-k'|/C)$ for every $k'$.
\end{enumerate}
\end{definition}

Clearly, for an infinite sequence $(g_0,...,g_n,...)$ and for every $k\leq n$, if $(g_0,...,g_n,...)$ is tight around $g_k$ at scale $l$, then the same holds for the finite sequence $(g_0,...,g_n)$.

\begin{lemma}\label{tightaroundrandomwalk}\cite[Lemma~10.11]{MathieuSisto}
There exists $C$ such that for every $k$, for every $l$,
the probability that the infinite sequence $(e,X_1,...,X_n,...)$ is tight around $X_k$ at scale $l$ is at least $1-C\mathrm{e}^{-l/C}$.
\end{lemma}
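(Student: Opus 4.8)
The plan is to reduce the statement to a single deviation estimate for the linear progress of the walk in the hyperbolic space $X$, and then run a union bound over the pairs of indices appearing in the definition of tightness. Since the support of $\mu$ is finite, set $M=\max\{d_\Gamma(e,s):s\in\mathrm{supp}(\mu)\}$, so that $d_\Gamma(X_{k'},X_{k'+1})\leq M$ for every $k'$. With this observation conditions (2) and (3) become essentially deterministic: the length of the path $(g_{k_1},\dots,g_{k_2})$ is at most $M(k_2-k_1)$, so (2) holds once $C\geq M$; and when $l\geq M$ we have $\max(l,|k-k'|/C)\geq l\geq M\geq d_\Gamma(X_{k'},X_{k'+1})$, so (3) holds for every $k'$. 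For $l<M$ one simply enlarges $C$ so that $1-C\mathrm{e}^{-l/C}\leq 0$ and the statement is vacuous. Hence all the probabilistic content is carried by condition (1).

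The key input I would use is an exponential deviation inequality for linear progress: there exist $\kappa>0$ and $C_0\geq 1$ such that for all $m\leq n$,
$$P\left(d_X(X_m,X_n)\leq \frac{n-m}{C_0}\right)\leq C_0\,\mathrm{e}^{-(n-m)/C_0}.$$
To obtain this, first note that $X_m^{-1}X_n=g_{m+1}\cdots g_n$ is independent of $X_m$ and has the same law as $X_{n-m}$; since $\Gamma$ acts by isometries on $X$, $d_X(X_m,X_n)=d_X(e,X_m^{-1}X_n)$ has the same law as $d_X(e,X_{n-m})$, so it suffices to control $P(d_X(e,X_j)\leq j/C_0)$. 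Because the action of $\Gamma$ on $X$ is non-elementary the walk has positive drift, $d_X(e,X_j)/j\to\kappa>0$ almost surely, and for finitely supported $\mu$ the sub-linear deviations decay exponentially; choosing $1/C_0<\kappa$ yields the displayed bound (this is the Maher--Tiozzo linear progress estimate, sharpened to exponential decay in the finite-support setting).

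With this estimate I would bound the probability that (1) fails. By a union bound,
$$P\big(\exists\, k_1\leq k\leq k_2,\ k_2-k_1\geq l,\ d_X(X_{k_1},X_{k_2})<(k_2-k_1)/C\big)\leq \sum_{\substack{k_1\leq k\leq k_2\\ k_2-k_1\geq l}}P\!\left(d_X(X_{k_1},X_{k_2})<\tfrac{k_2-k_1}{C}\right).$$
Grouping by the gap $d=k_2-k_1$, there are at most $d+1$ admissible pairs $(k_1,k_2)$ straddling $k$, and for $C\geq C_0$ each contributes at most $C_0\mathrm{e}^{-d/C_0}$. Thus the failure probability is at most $\sum_{d\geq l}(d+1)C_0\mathrm{e}^{-d/C_0}$, and absorbing the polynomial factor $(d+1)$ by slightly relaxing the exponential rate bounds this tail by $C\mathrm{e}^{-l/C}$. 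Enlarging $C$ a final time so that it simultaneously dominates $M$, the constant $C_0$, and the polynomial corrections, one concludes that $(e,X_1,\dots)$ is tight around $X_k$ at scale $l$ with probability at least $1-C\mathrm{e}^{-l/C}$, uniformly in $k$.

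The main obstacle is precisely the exponential deviation inequality for linear progress under a merely acylindrical, non-elementary action on a possibly non-proper hyperbolic space: positive drift alone does not suffice, as one needs the \emph{exponential} decay rate for sub-linear deviations. This is where the hyperbolicity of $X$ and the geometry of the action enter essentially, typically through an argument producing, with geometrically controlled gaps, times at which the trajectory is well aligned in $X$ so that progress accumulates additively — which is the content underlying the deviation inequalities of Mathieu--Sisto invoked earlier in this section. Everything else is bookkeeping with the union bound together with the reduction of conditions (2) and (3) to the finiteness of the support.
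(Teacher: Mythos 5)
Your proposal is correct, but a comparison with ``the paper's proof'' is slightly moot here: the paper offers no proof of this lemma at all. It cites \cite[Lemma~10.11]{MathieuSisto} verbatim and only adds the remark that the finite-sequence statement proved there extends to the infinite sequence $(e,X_1,\dots,X_n,\dots)$. What you have written is a faithful reconstruction of the argument underlying the cited result. Your reduction of conditions (2) and (3) to the finiteness of $\mathrm{supp}(\mu)$ is valid (and the vacuity trick for $l<M$ is legitimate, since $C\mathrm{e}^{-l/C}\geq C\mathrm{e}^{-M/C}\geq 1$ for $C$ large), so the entire probabilistic content sits in condition (1), exactly as you say. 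The union bound over pairs $(k_1,k_2)$ straddling $k$ with gap $d$, of which there are at most $d+1$ uniformly in $k$, combined with $\sum_{d\geq l}(d+1)C_0\mathrm{e}^{-d/C_0}\lesssim \mathrm{e}^{-l/C}$, is correct and has the pleasant side effect of proving the infinite-sequence version directly --- which is precisely the point the paper has to wave at separately. The one genuine input you import as a black box, the exponential decay of sublinear progress $P(d_X(e,X_j)\leq j/C_0)\leq C_0\mathrm{e}^{-j/C_0}$ for a finitely supported non-elementary measure, is indeed a theorem (Maher--Tiozzo; it is also the engine of Mathieu--Sisto's own proof), and you are right both to flag it as the only non-elementary step and to note that positive drift alone would not suffice. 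The only caveat worth recording is that this estimate requires the action on $X$ to be non-elementary, which holds in the setting where the paper applies the lemma but is not literally part of the hypotheses as displayed.
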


Actually, \cite[Lemma~10.11]{MathieuSisto} is only about the finite sequence $(e,X_1,...,X_n)$ for $k\leq n$, but it is clear from the proof that the result holds for the infinite sequence $(e,X_1,...,X_n,...)$.

\medskip
We finish the proof of Theorem~\ref{maintheoremmeasureMorse}.
We choose $l$ so that $1-C\mathrm{e}^{-l/C}\leq \eta/2$.
Then, the probability that $X_n$ converges to a $N$-Morse point $X_\infty$ and that the infinite path $(e,X_1,...,X_n,...)$ is tight around $X_k$ at scale $l$ is at least $\eta/2$.
According to \cite[Lemma~10.12]{MathieuSisto}, if $(e,X_1,...,X_n)$ is tight around $k$ at scale $l$, then the distance in $\Gamma$ between $X_k$ and a geodesic from $e$ to $X_n$ is bounded linearly in $l$.
Assuming that $X_n$ converges to a $N$-Morse point $X_\infty$ and letting $n$ tend to infinity, we get that the distance between $X_k$ and an infinite geodesic from $e$ to $X_\infty$ is bounded.

To sum-up, for every $k$, with probability at least $\eta/2$, $X_n$ converges to a $N$-Morse point $X_\infty$ and $X_k$ is at distance at most $D$ from a geodesic from $e$ to $X_\infty$, for some constant $D$.

To conclude, we use the results in \cite{SistoTaylor} which state that the random walk spends essentially a logarithmic time in non-maximal domains (if $\Gamma$ is assumed to be hierarchically hyperbolic) or in parabolic subgroups (if $\Gamma$ is assumed to be relatively hyperbolic).
Precisely, \cite[Theorem~2.3]{SistoTaylor} shows that for all large enough $n$, there exists a subspace $U\subset \Gamma$ such that the probability that $d_U(\pi_U(e),\pi_U(X_n)) \geq C^{-1} \log n$ is at least $1-\eta/4$, where
$U\in \mathfrak{S}\setminus \{\mathbf{S}\}$ if $\Gamma$ is hierarchically hyperbolic, or
$U$ is a parabolic subgroup if $\Gamma$ is relatively hyperbolic.

On the other hand, recall that Proposition~\ref{ABDTheoremE} and Lemma~\ref{relativelyhyperbolicMorseboundedprojections} show that a $N$-Morse geodesic has $D'$-bounded projections on such a $U$, where $D'$ only depends on $N$.
In particular, if the distance from a point $g$ to a $N$-Morse geodesic $\alpha$ is bounded, then $d_U(\pi_U(e),\pi_U(g))$ also is bounded.
This proves that for every $k$, with probability at least $\eta/2$, $d_U(\pi_U(e),\pi_U(X_k))$ is uniformly bounded.

Finally, for large enough n, with probability at least $\eta/4$, for every $U$ as above, $d_U(\pi_U(e),\pi_U(X_n))$ is uniformly bounded while $d_U(\pi_U(e),\pi_U(X_n)) \geq C^{-1} \log n$ for some $U$. This is a contradiction.
We thus proved Theorem~\ref{maintheoremmeasureMorse}. \qed

\bibliographystyle{plain}
\bibliography{MorseMartin}

\end{document}